\newcommand{\Rn}[1]{{\romannumeral #1}}
\newcommand{\RN}[1]{\uppercase\expandafter{\romannumeral #1\relax}}
\newcommand{\Z}{\mathbb{Z}}
\newcommand{\R}{\mathbb{R}}
\newcommand{\C}{\mathbb{C}}
\newcommand{\RE}{\operatorname{Re}}
\newcommand{\eps}{\varepsilon}
\newcommand{\intd}{\mspace{0.3mu}\operatorname{d}\mspace{-2.0mu}}
\newcommand{\supp}{\operatorname{supp}}
\newcommand{\ord}{\operatorname{ord}}
\newcommand{\id}{\operatorname{id}}
\newcommand{\Lloc}{L_{\operatorname{loc}}} % local Lebesgue space
\newcommand{\Cinfty}{\ensuremath{C^{\infty}}}
\newcommand{\Analytic}{{\mathcal A}} % (real) analytic
\newcommand{\Aprime}{{\mathcal A}'} % analytic functional
\newcommand{\Dprime}{{\ensuremath{\mathcal{D}^\prime}}} % distributions
\newcommand{\Sob}[1]{\ensuremath{H^{#1}}} % Sobolev space
\newcommand{\V}{\ensuremath{\mathcal V}} % Lie algebra of vector fields
\newcommand{\Hfcn}{\ensuremath{\mathcal B}} % hyperfunction
\newcommand{\Eigen}{\ensuremath{\mathcal E}} % space of joint eigenfunctions
\newcommand{\mEigen}{\ensuremath{\mathcal E^*}} % space of slowly growing joint eigenfunctions
\newcommand{\Diff}{\operatorname{Diff}} % diff op
\newcommand{\D}{{\mathbb D}} % left invariant diff op
\newcommand{\lie}{\mathfrak} % Lie algebra
\newcommand{\astarc}{\ensuremath{\lie{a}_{\C}^*}}
\newcommand{\Poisson}{{\mathcal P}}  %% Poisson transform
\newcommand{\bv}{\beta}  %% boundary value map
\newcommand{\sfcn}{\phi}  % spherical function
\newcommand{\cfcn}{\mathbf{c}} % HC c function
\newcommand{\efcn}{\mathbf{e}} % HC e function
\newcommand{\OOplus}{O_+} % the octant (wedge)
\newcommand{\OO}{O} % open cube extending the octant
\newcommand{\OOplusclosure}{\bar O_+} % closure of the octant
\newcommand{\dd}[1]{\partial_{#1}} % partial derivative, e.g. \dd{z}
\newcommand{\tdt}{t\partial_t} % Fuchsian vector field
\newcommand{\ipol}{\iota} % indicial polynomial
\renewcommand{\a}{\lie{a}}  %% overwriting old \a
\newtheorem{theorem}{Theorem}[section]
\newtheorem{proposition}[theorem]{Proposition}
\newtheorem{corollary}[theorem]{Corollary}
\newtheorem{lemma}[theorem]{Lemma}
\theoremstyle{definition}
\newtheorem{remark}[theorem]{Remark}
\definecolor{gruen}{rgb}{0,0.5,0}
\title{Boundary Values of Eigenfunctions on \\ Riemannian Symmetric Spaces}
\author{S.~Hansen, J.~Hilgert, and A.~Parthasarathy}
\date{July 18, 2018}
\begin{document}

\maketitle
\begin{abstract}
We give a new and self-contained proof of a generic version of the (former) Helgason conjecture. It says that for generic spectral parameters the Poisson transform is a topological isomorphism, with the inverse given by a boundary value map.
Following Oshima's approach to a simplified definition of boundary values, and using the earlier work of Baouendi and Goulaouic on Fuchsian type equations, our proof is along the lines of our earlier work in the rank one distributional case, and works for both the hyperfunction and the distribution setting. 
\end{abstract}

\section{Introduction} 

In \cite{Helgason74conjecture} Helgason proved that for Riemannian symmetric spaces $G/K$ of rank one all joint eigenfunctions under the action of the algebra of invariant differential operators are Poisson integrals of their hyperfunction boundary values, except for an explicitly given set of spectral parameters determining the eigenvalues. In the same paper he conjectured that this theorem can be extended to higher rank spaces replacing the (geodesic) boundary by the Furstenberg boundary. In    
\cite{Helgason76duality2} he proved such an extension for $K$-finite functions. 

The proof of the conjecture given in \cite{Kashiwara78eigen} makes essential use of the theory of boundary values for systems of differential equations with regular singularities developed by Kashiwara and \=Oshima in \cite{KashiwaraOshima77}, which in turn is based on the theory of microdifferential operators as laid out in \cite{SatoKawaiKashiwara73}. A beautiful exposition, but without full proofs, of the main results as well as their historical evolution is given in \cite{Schlichtkrull84hyperf} (see in particular Chapter 5).

The question which joint eigenfunctions are Poisson integrals of distributions on the Furstenberg boundary was settled for rank one spaces in \cite{Lewis78eigenf}, and in general by \=Oshima and Sekiguchi in \cite{OshimaSekiguchi80}. The latter again depended on \cite{KashiwaraOshima77}.
Other authors searched for alternative proofs using methods more traditional in noncommutative harmonic analysis. Such proofs, based on asymptotic expansions of eigenfunctions, were given in \cite{Wallach83} and  \cite{BanSchlichtkrull87}, where the latter depends on the former, but also extends it by giving an explicit description of the inverse of the Poisson transform in terms of leading coefficients.

Yet another approach allowing for further generalization was initiated by Schmid in \cite{Schmid85}, where he sketched an outline of how Helgason's conjecture could be derived from a characterization of maximal globalizations of Harish-Chandra modules. The proof of the latter that was sketched in  \cite{KashiwaraSchmid94} and worked out in \cite{kashiwara08equivariant} is based on equivariant derived categories. To our understanding it does not give an explicit description of the boundary value map.

In this paper we give a complete and self-contained proof that for generic spectral parameters the Poisson transform is a topological isomorphism with inverse a boundary value map. 
The proof works for the hyperfunction and distribution cases alike and follows the lines of the proof we gave in \cite{SHJHAP17resonscattpoles} for the rank one case. It does not use microlocal methods and gives an explicit description of the boundary value map which is much simpler than the one given in \cite{Kashiwara78eigen} based on \cite{KashiwaraOshima77}. We rather follow the approach to Fuchsian type differential equations presented in \cite{Oshima83bdryval}, in which the author already has announced that the methods presented would be sufficient to prove the Helgason conjecture.
In fact, we hope that the full conjecture,
including the characterization of the exceptional set of spectral parameters as the zeros of the Harish-Chandra $\mathbf e$-function,
can be deduced, similarly to \cite{Kashiwara78eigen},
from our generic theorem by first holomorphically extending the boundary value map to at least one spectral parameter
from each Weyl group orbit, and then applying intertwining operators.

We conclude this introduction by explaining our results in some detail.
As is well-known, in compactifications which embed the symmetric space as a manifold with corner,
invariant differential operators are regular singular with respect to the edge of the corner.
The edge is the Furstenberg boundary of the symmetric space.
The solution theory of these operators and of Fuchsian type operators associated with these,
is controlled by their characteristic exponents.
The set of characteristic exponents is an affine image of the Weyl group orbit of the underlying spectral parameter.
Boundary values of joint eigenfunctions are associated to characteristic exponents.
If characteristic exponents with negative integer components are avoided, then one has unique solvability
of Fuchsian equations with data supported in the respective boundary faces.
As in \cite{KashiwaraOshima77} and \cite{Oshima83bdryval},
joint eigenfunction are, after a shift by the characteristic exponent, and by solving such Fuchsian type equations,
extended as (shifted) joint eigenfunctions around the edge.
The theory of \cite{BaouendiGoulaouic73} is sufficient for solving the Fuchsian type equations;
appendix~\ref{section-Fuchsian-eqns} contains a detailed exposition.
The associated Fuchsian type operators act tangentially at the faces of the corner.
From these is constructed a differential operator which acts transversally to each face.
Applying this operator to the extended eigenfunction, we obtain, up to a multiplicative constant,
the (tensor) product of its boundary value with the Dirac function of the edge.
The constant is non-zero under the condition that the components of the characteristic exponent are simple.
Assuming this condition and the condition on absence of negative integer components mentioned above,
the boundary value map is defined.
The boundary value map inverts the Poisson transform up to the Harish-Chandra $\mathbf c$-function.      
This assertion is our main result, see Theorem~\ref{theorem-kkmoot}.

\section{Preliminaries} 
\label{sect-Prelims}

Let $X=G/K$ be a Riemannian symmetric space of noncompact type with 
$G$, a noncompact connected real semisimple Lie group with finite centre, and $K$, a maximal compact subgroup of $G$. 
Fix an Iwasawa decomposition $G=KAN$ thus making the following choices:
A Cartan decomposition $\lie{g}=\lie{k}\oplus\lie{p}$ of the Lie algebra of $G$,
a maximal abelian subalgebra $\lie a$ of $\lie p$.
The Iwasawa projections $\kappa:G\to K$ and $H:G\to \lie a$ are given by $g\in \kappa(g)\exp(H(g)) N$.
The dimension $\ell=\dim_\R \lie a$ is the rank of $X$,
$\lie a^*$ and $\astarc$ are the dual of $\lie a$ and its complexification.
Fix a positive system $\Sigma^+$ of the set $\Sigma\subset\lie a^*$ of restricted roots.
We write $\rho\in\lie a^*$ for the weighted half-sum of positive roots,
$W$ for the Weyl group for the restricted roots.
Let $M$ denote the centralizer of $\a$ in $K$.
The compact space $B=K/M$ is called the Furstenberg boundary of $X$.

The differential operators on $X$ which are invariant under left translation by
elements of $G$ form a commutative algebra, the algebra $\D(X)$ of invariant differential operators;
see \cite[Chapter II]{Helgason84GGA}.
By the Harish-Chandra isomorphism, $\D(X)$ is isomorphic to the algebra $I(\astarc)$
of $W$-invariant polynomials on $\astarc$.
Every character of $I(\astarc)$ is given by evaluation at some point of $\astarc$.
Thus, every character of $\D(X)$ is, for some $\lambda\in\astarc$, of the form $\chi_\lambda$,
where $\mu\mapsto\chi_\mu(D)$ is the polynomial in $I(\astarc)$ which
corresponds to $D$ under the Harish-Chandra isomorphism.
The equality $\chi_\lambda=\chi_\mu$ holds if and only if $\lambda$ and $\mu$ belong to the same $W$-orbit.
The system of diffential equations
\begin{equation}
\label{eq-joint-eigensys}
Du=\chi_{\lambda}(D)u,\quad D\in\D(X),
\end{equation}
is called the joint eigensystem with spectral parameter $\lambda$.
The solutions $u$ of \eqref{eq-joint-eigensys} are called joint eigenfunctions,
and the space of joint eigenfunctions is denoted $\Eigen_\lambda(X)$.
The Laplace--Beltrami operator belongs to $\D(X)$ and is elliptic.
By elliptic regularity, $\Eigen_\lambda(X)$ is a subspace of the space $\Analytic(X)$ of analytic functions on $X$.
Here and in the following, analytic means real analytic.
Observe that $\Eigen_{w\cdot\lambda}(X)= \Eigen_{\lambda}(X)$ for $w\in W$.
The left regular representation $T_\lambda$ on $\Eigen_\lambda(X)$ coming from the action of $G$ on $X$
by left translation is called the eigenspace representation with spectral parameter $\lambda$.

The minimal parabolic $P=MAN$ is associated to the Iwasawa decomposition.
The homogeneous space $G/P$ is $G$-equivariantly diffeomorphic to the
Furstenberg boundary $B$ by the map $kM\mapsto kP$,
$k\in K$, where the $G$-action on $B$ is given by $g\cdot kM=\kappa(gk)M$.

We set $a^\mu=e^{\mu(\log a)}$ for $a\in A$, $\mu\in\astarc$.
Given $\lambda \in \astarc$, the character $man\mapsto a^{\rho-\lambda}$ defines a one-dimensional
representation $\C_\lambda$ of $P$.
Let $L_\lambda=G\times_P\C_\lambda$ be the associated homogeneous line bundle over $G/P$.
By inducing from $\C_\lambda$, we obtain the
spherical principal series representation $\pi_\lambda$ of $G$ on the space $\Hfcn(G/P; L_\lambda)$
of hyperfunction sections.
See \cite{SatoKawaiKashiwara73}, \cite[Ch.~\RN 9]{Hormander90anaOne}, and \cite{Schlichtkrull84hyperf} for the theory of hyperfunctions.
The sections in $L_\lambda$ are canonically identified with hyperfunctions $\varphi$ on $G$
which satisfy the equivariance condition  $\varphi(gman)=a^{\lambda-\rho}\varphi(g)$ for $g\in G$, $man\in P$.
By the Iwasawa decomposition, restriction of $\varphi$ to $K$ gives an isomorphism $\Hfcn(G/P; L_\lambda) \equiv \Hfcn(B)$.
The inverse assigns to a hyperfunction $f$ on $B$ the section $\varphi$ with
$\varphi(kan)=a^{\lambda-\rho}f(kM)$, $a\in A$ and $n\in N$.
We then have a realization of the spherical principal series representation $\pi_\lambda$ on $\Hfcn(B)$,
which is given explicitly by
\begin{equation*}
\label{eq-spher-rep-on-Aprime-B}
(\pi_\lambda(g) f)(kM) = e^{-(\rho-\lambda)(H(g^{-1}k))} f(\kappa(g^{-1}k)M).
\end{equation*}
See \cite[Ch.~\RN 7 \S 1]{Knapp86repexa}, and \cite[Ch.~\RN 6 \S 3 (13)]{Helgason94GASS}.

The Poisson transformation $\Poisson_\lambda$ with spectral parameter $\lambda\in\astarc$ is a $G$-equivariant map
which intertwines $\pi_\lambda$ with $T_\lambda$:
\[
\Poisson_\lambda: \Hfcn(G/P; L_\lambda) \to \Eigen_\lambda(X),
\quad
(\Poisson_\lambda f)(gK) = \int_K f(gk)\intd k.
\]
Alternatively, in the compact picture of $\pi_\lambda$, $\Poisson_\lambda:\Hfcn(B)\to \Eigen_\lambda(X)$,
\begin{equation*}
\label{def-Poisson-transform}
(\Poisson_\lambda f)(x)= \int_B e^{(\rho+\lambda)(A(x,b))} f(b)\intd b.
\end{equation*}
Here $A$ denotes the horocycle bracket $A(x,b)=-H(g^{-1}k)\in\a$ of $x=gK\in X$, $b=kM\in B$.
Since $B$ is a closed analytic manifold, $\Hfcn(B)$ naturally identifies with the space
$\Aprime(B)$ of analytic functionals on $B$.
Invoking the closed graph theorem for Fr\'echet spaces, it follows
that $\Poisson_\lambda:\Aprime(B)\to\Eigen_\lambda(X)$ is a continuous linear operator.

The spherical function $\sfcn_\lambda =\Poisson_\lambda 1$ is the unique $K$-invariant
element of $\Eigen_\lambda(X)$ with the normalisation $\sfcn(o)=1$.
Moreover, $\sfcn_{w\cdot\lambda}=\sfcn_\lambda$ holds for $w\in W$ and $\lambda\in\astarc$.
The behaviour at infinity of the spherical function $\phi_\lambda$, for sufficiently regular $\lambda$,
is captured by the Harish-Chandra $\cfcn$-function.
The $\cfcn$-function has an integral representation given by
$\cfcn_\lambda=\int_{\bar N}e^{-(\lambda+\rho)(H(\bar n))}\intd \bar n$
for $\RE\lambda\in \a_+^*$ with the measure $d\bar n$ suitably normalised,
where we wrote $\bar N$ for the image of the nilpotent subgroup $N$  under the Cartan involution.

The simple roots $\{\alpha_1,\ldots,\alpha_\ell\}\subset \Sigma^+$ are a basis of $\lie a^*$.
Let $\{H_1,\ldots,H_\ell\}\subset\lie a$ be the dual basis,
that is $\alpha_j(H_k)=\delta_{jk}$ holds for $1\leq j,k\leq \ell$.
We identify $\astarc$ with $\C^\ell$ via the basis $\alpha_1,\ldots,\alpha_\ell$, i.e.,
$\astarc\ni\zeta = \sum\nolimits _j \zeta_j\alpha_j = \big(\zeta(H_1),\ldots, \zeta(H_\ell)\big)\in \C^\ell$.
Let $\a^+$ be the positive Weyl chamber in $\a$ determined by $\Sigma^+$, and 
$\a^*_+$ the dual cone of $\a^+$ in $\a^*$.
Set $A^+=\exp\a^+$.
We then identify $A^+$ with the open octant $]0,1[^\ell$ by the diffeomorphism
$t=(t_1, \ldots , t_\ell)\mapsto a=\exp(-\sum\nolimits _j (\log t_j)H_j)$.
From the decomposition $G=K\overline{A^+}K$, we have the diffeomorphism $(kM,a)\mapsto ka\cdot o$
from $B\times A^+$ onto the open dense subset $KA^+\cdot o$ of $X$.
Define the octants $\OOplus=B\times ]0,1[^\ell$ and $\OO=B\times ]-1, 1[^\ell$.
The map
\begin{equation}
\label{eq-diffeom-to-corner}
\OOplus\to KA^+\cdot o, \quad (kM,t)\mapsto ka\cdot o, \quad a=\exp\big(-\sum\nolimits _j \log(t_j) H_j\big),
\end{equation}
is an analytic diffeomorphism from $\OOplus$ onto a dense open subset of $X$.
We call the closure $\OOplusclosure$ of $\OOplus$ in $\OO$ the corner domain,
and $B\equiv \{t=0\}\subset\OOplusclosure$ its edge.
A local diffeomorphism at the edge of the corner is an analytic map
\begin{equation}
\label{eq-Phi-edge-diffeom}
\Phi:(z,t)\mapsto(z',t'), \quad\text{where $t_j'=\tau_j(z,t)t_j$, $0<\tau_j$ analytic,}
\end{equation}
which is a diffeomorphism between connected open subsets of $\OO$ which intersect the edge.
(We reserve the notation $t$ for the special coordinates defined in \eqref{eq-diffeom-to-corner}.)
Let $g\in G$.
By \cite[Lemma 4.1-2]{Kashiwara78eigen} or \cite[Lemma~4.1.6]{Schlichtkrull84hyperf}
there exists a diffeomorphism $\Phi=\Phi_g$ between open neighbourhoods of the edge,
such that \eqref{eq-Phi-edge-diffeom} and
\begin{equation}
\label{eq-Phi-g}
\Phi_g(x)=g^{-1}\cdot x, \quad
\Phi_g(z,0)=(\kappa(g^{-1}k)M,0), \quad
\tau_j(z,0)=e^{\alpha_j(A(g\cdot o,z))}
\end{equation}
hold for $x\in KA^+\cdot o\subset X$, $z=kM\in B$.

The conormal bundle of the edge, $N^*B$, is spanned by the differentials $\intd t_j$.
Define the line bundle $(N^*B)^\sigma\to B$, $\sigma=(\sigma_1,\ldots,\sigma_\ell)\in\C^\ell$, by the local frames
$(\intd t')^\sigma=\prod_j (\intd t_j')^{\sigma_j}$ and the transition laws
\begin{equation}
\label{eq-trafo-NstarBsigma}
(\intd t')^\sigma = \tau(z,0)^\sigma (\intd t)^\sigma.
\end{equation}
Note that $(N^*B)^\sigma$ is trivialized by the global section $(\intd t)^\sigma$.
Using \eqref{eq-Phi-g}, $G$ acts equivariantly on $(N^*B)^\sigma\to B$.
The line bundles $(N^*B)^{\rho-\lambda}\to B$ and $L_\lambda\to G/P$ are $G$-isomorphic
as $G$-equivariant line bundles; see \cite[Lemma 5.2.1]{Schlichtkrull84hyperf}.
So, $\Hfcn(G/P;L_\lambda)$ and $\Hfcn(B;(N^*B)^{\sigma})$ are $G$-isomorphic representation spaces.
In particular, $\Poisson_\lambda:\Hfcn(B;(N^*B)^{\sigma})\to\Eigen_\lambda(X)$ is a $G$-homomorphism.

Let $d_X(x)$ denote the Riemannian distance from $x\in X$ to the origin $o=K\in X$.
A function $u$ on the symmetric space $X$ is said to be of slow growth
if $ue^{-rd_X}$ is a bounded function for some $r>0$ \cite[Section~I.2]{BanSchlichtkrull87}.
Denote by $\mEigen_{\lambda}(X)$ the space of $u\in\Eigen_\lambda(X)$ which are of slow growth.
By \cite[Lemma 2.1 (\Rn{3}), (\Rn{4})]{BanSchlichtkrull87}, $u$ is of slow growth iff
there exists $\nu\in\mathbb N^\ell$ such that $t^\nu u$ is bounded on $\OOplus$.
It was proved in \cite{Lewis78eigenf} that $\Poisson_\lambda$ maps $\Dprime(B)$ into $\mEigen_\lambda(X)$.
As in \cite{SHJHAP17resonscattpoles}, we use Sobolev spaces of negative integer order,
$\Sob{-m}(\OOplus)$, to deal with the growth condition.
By (the proof of) \cite[Lemma 2.1]{SHJHAP17resonscattpoles} the union $\cup_m \Sob{-m}(\OOplus)$
contains the functions of slow growth.
In particular, functions of slow growth are extendible distributions,
Since differential operators are continuous maps into the space of distributions,
$\Eigen_\lambda(X)\cap\Sob{-m}(\OOplus)$ is a closed subspace of $\Sob{-m}(\OOplus)$.
The space $\Dprime(B)=\cup_m \Sob{-m}(B)$, and the space
\begin{equation}
\label{eq-eigen-slowgrowth}
\mEigen_\lambda(X) = \cup_{m\in\mathbb N} \big(\Eigen_\lambda(X)\cap\Sob{-m}(\OOplus)\big)
\end{equation}
are (DFS)-spaces, that is strong duals of Fr\'echet--Schwartz spaces;
\cite[Prop.~25.20]{MeiseVogt97fa}.
Invoking the closed graph theorem for (DFS)-spaces,
the Poisson transform is seen to be a continuous linear operator $\Poisson_\lambda:\Dprime(B)\to \mEigen_\lambda(X)$.
For every $m\in\mathbb N$ and every bounded domain $\Lambda\subset\astarc$ there exists $m'\in\mathbb N$
such that $\Lambda\ni\lambda\to \Poisson_\lambda$ is a holomorphic map into the
Banach space $\mathcal L\big(\Sob{-m}(B), \Sob{-m'}(\OOplus)\big)$ of bounded linear operators.

\section{Differential operators}

Conjugation with the pullback by the diffeomorphism \eqref{eq-diffeom-to-corner}
identifies $\D(X)$ with a subalgebra of the algebra of differential operators with analytic coefficients
on $\OOplus$, $\Diff(\OOplus)$,
By restricting coefficients from $\OO$ to $\OOplus$ we also identify $\Diff(\OO)$
with a subalgebra of $\Diff(\OOplus)$.

The vector field $\theta_j:=t_j\partial_{t_j}$ is tangent to the $j$-th wall, $\OO\cap\{t_j=0\}$.
Denote by $\Diff_b(\OO)$ the subalgebra of $\Diff(\OO)$ which is generated by the
vector fields tangent to each wall.
If $z_1,\ldots,z_n$ are local coordinates of $B$, then the tangent vector fields are spanned,
over the ring of analytic functions, by the vector fields $\dd{z_i}$ and $\theta_j$.
Notice that, for $s=(s_1,\ldots,s_\ell)\in\C^\ell$,
\[ t^{-s}\dd{z_i}t^s=0, \quad t^{-s}\theta_jt^s=\theta_j+s_j, \]
hold in $\Diff(\OOplus)$.
Here $t^s=t_1^{s_1}\cdots t_{\ell}^{s_\ell}$.
It follows that, for every $D\in\Diff_b(\OO)$ the conjugate operator $t^{-s}Dt^s$,
initially a differential operator on $\OOplus$,
uniquely extends to an element of $\Diff_b(\OO)$ which we denote by $D^s$.

The vector fields on $\OO$ which vanish in the edge $B=\{t=0\}$
and which are tangent to each wall form a Lie algebra, $\V_0(\OO)$.
Denote by $\Diff_0(\OO)$ the subalgebra of $\Diff_b(\OO)$ which is generated by $\V_0(\OO)$.
Using local coordinates $z_i$ on $B$ every $D\in\Diff_0(\OO)$ can be written in the form
\begin{equation}
\label{eq-D-Diffzero-local}
D= \sum\nolimits _{p,q} a_{p,q} (t\dd{z})^q \theta^p,
\quad t\dd{z}:=(t_j\dd{z_i})_{i,j},
\quad \theta:=(\theta_1,\ldots,\theta_{\ell}).
\end{equation}
Here the sum is finite, $p$ and $q$ are multiindices of non-negative integers,
and the coefficients $a_{p,q}$ are analytic functions.
Forming a (tensor) product of classical homogeneous distributions,
we put $t_+^s=(t_1)_+^{s_1}\ldots(t_\ell)_+^{s_\ell}\in\Dprime(\OO)$.
It follows from \eqref{eq-D-Diffzero-local} that $D\in\Diff_0(\OO)$ maps distributions
of the form $at_+^s$, $a$ analytic, into distributions of the same form.
The most singular term of $D(at_+^s)=bt_+^s$ is determined algebraically as $b|_{t=0} =\ipol(D)(s) a|_{t=0}$.
Here
\begin{equation}
\label{eq-ipol-D-defined}
\ipol(D)(s) := (D^s 1)|_{t=0}\in \Analytic(B)[s]
\end{equation}
is a polynomial in $s\in\C^\ell$, the indicial polynomial of $D$ with respect to the edge $B$.

\begin{proposition}
\label{prop-invdo-subalg-Diffzero}
$\D(X)\subset\Diff_0(\OO)$, and
$\ipol(D)(\sigma) = \chi_{\rho-\sigma}(D)$ for $\sigma\in\astarc$, $D\in\D(X)$.
\end{proposition}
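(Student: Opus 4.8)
The plan is to reduce both assertions to the known asymptotic behaviour of the spherical function $\phi_{\rho-\sigma}$ near the edge, using the characterisation \eqref{eq-ipol-D-defined} of the indicial polynomial. First I would observe that $\mathcal D(X)$ is spanned by operators expressible via the radial parts along $A^+$: working in the coordinates \eqref{eq-diffeom-to-corner}, the diffeomorphism identifies $A^+$ with $]0,1[^\ell$ and conjugates the $\theta_j=t_j\partial_{t_j}$ to (minus) the basic directional derivatives $\partial_{\log a}$ along $\mathfrak a$. The content of the first claim, $\mathcal D(X)\subset\Diff_0(\OO)$, is that each $D\in\mathcal D(X)$, written in the $(z,t)$-coordinates, has the form \eqref{eq-D-Diffzero-local}, i.e.\ all $z$-derivatives come paired with at least one factor $t_j$, and every pure $\theta$-monomial has analytic coefficients extending across $\{t=0\}$. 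This is exactly the statement that invariant differential operators are regular-singular with respect to the edge of the corner compactification, which is precisely the structural input recalled in the introduction (the compactifications in which $X$ embeds as a manifold with corner and on which the invariant operators are regular singular). Concretely I would invoke the local normal form of \cite[Lemma 4.1--2]{Kashiwara78eigen} (equivalently \cite[Lemma~4.1.6]{Schlichtkrull84hyperf}) together with the $K\overline{A^+}K$ decomposition: $G$-invariance forces the coefficients to be determined by the $M$-invariant part of the radial part, and Harish-Chandra's formula for the radial part of $\mathcal D(X)$ on $A^+$ (see \cite[Chapter II]{Helgason84GGA}) exhibits the coefficients as analytic functions of the variables $e^{-\alpha(\log a)}=\prod_j t_j^{\langle\ \rangle}$, which are analytic up to $t=0$; this gives the membership in $\Diff_0(\OO)$, with the slightly stronger bookkeeping that any transverse $z$-derivative is accompanied by a $t$-factor coming from the off-diagonal terms of the radial part.

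For the identity $\ipol(D)(\sigma)=\chi_{\rho-\sigma}(D)$, the key step is to apply the defining relation \eqref{eq-ipol-D-defined}, $\ipol(D)(\sigma)=(D^\sigma 1)|_{t=0}$, and compare it with the eigenvalue equation satisfied by the spherical function. Recall $D^\sigma=t^{-\sigma}Dt^\sigma$ in $\Diff_b(\OO)$. The natural test object is $\phi_{\rho-\sigma}$: it lies in $\Eigen_{\rho-\sigma}(X)$, so $D\phi_{\rho-\sigma}=\chi_{\rho-\sigma}(D)\phi_{\rho-\sigma}$, and by the Harish-Chandra asymptotic expansion (valid for generic $\sigma$, and which is all we need since the statement is a polynomial identity in $\sigma$) one has, near the edge, $\phi_{\rho-\sigma}(z,t)=t^\sigma\,\mathbf c_{\rho-\sigma}\,(1+o(1))$ with the leading term a nonzero constant times $t^\sigma$ — this is exactly the normalisation built into the $\mathbf c$-function integral recalled in the Preliminaries. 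Writing $\phi_{\rho-\sigma}=t^\sigma v$ with $v$ analytic near $t=0$ and $v|_{t=0}=\mathbf c_{\rho-\sigma}\neq 0$, the eigenvalue equation becomes $D^\sigma v=\chi_{\rho-\sigma}(D)v$; restricting to $t=0$ and using that $D^\sigma\in\Diff_b(\OO)$ so that $(D^\sigma v)|_{t=0}=(D^\sigma 1)|_{t=0}\,v|_{t=0}$ (the leading-term algebra is multiplicative, exactly the mechanism described before \eqref{eq-ipol-D-defined}), I get $\ipol(D)(\sigma)\,\mathbf c_{\rho-\sigma}=\chi_{\rho-\sigma}(D)\,\mathbf c_{\rho-\sigma}$, hence $\ipol(D)(\sigma)=\chi_{\rho-\sigma}(D)$ for generic $\sigma$, and then for all $\sigma$ by polynomiality. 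One also concludes a posteriori that $\ipol(D)(\sigma)$ is a constant function on $B$, consistent with $\chi_{\rho-\sigma}(D)$ being a scalar.

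The main obstacle I anticipate is the first assertion, $\mathcal D(X)\subset\Diff_0(\OO)$: one must verify not merely that $D$ is tangent to each wall (which is comparatively easy from $K\overline{A^+}K$-invariance and the structure of the radial part) but that $D$ actually lies in the subalgebra generated by the vector fields in $\V_0(\OO)$, i.e.\ that there is no "$\partial_{z}$ without an accompanying $t$" and no singular coefficient. This requires a careful reading of the local normal form \eqref{eq-Phi-g} and of how the off-diagonal blocks of the radial part of $D$ transform transversally; equivalently, one needs that in the corner chart the transverse behaviour of $D$ is governed entirely by the $\theta_j$'s with analytic (not merely meromorphic) coefficients, which is the regular-singularity statement. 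Once that structural fact is in hand, the indicial-polynomial computation is essentially a one-line consequence of the spherical-function asymptotics, so I would present the membership proof in full and keep the $\chi$-identity brief.
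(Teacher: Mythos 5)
Your proposal has two genuine gaps. For the first assertion, the paper simply cites the structural result: in the coordinates of the Oshima compactification every $D\in\D(X)$ has the form \eqref{eq-D-Diffzero-local} (\cite[Theorem 4.3.1]{Schlichtkrull84hyperf}), combined with \cite[Proposition 11]{Oshima78realization} to identify $\OO$ with an open dense subset of that compactification containing $B$. Your sketch via radial parts cannot substitute for this: the radial part of $D$ along $A^+$ only records the action of $D$ on $K$-invariant (i.e.\ $z$-independent) functions and therefore says nothing about the terms of $D$ involving $\dd{z_i}$; in particular it cannot show that every $\dd{z_i}$ is accompanied by a factor $t_j$ with analytic coefficients. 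The reference you propose, \cite[Lemma 4.1-2]{Kashiwara78eigen}, concerns the diffeomorphisms $\Phi_g$, not the structure of the operators. You correctly flag this as the main obstacle, but you do not close it; what is needed is the decomposition of $U(\lie g)$ underlying \cite[Theorem 4.3.1]{Schlichtkrull84hyperf} (equivalently, \=Oshima's construction), which is exactly what the paper invokes.

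For the second assertion your test-function argument contains a concrete error. With $\lambda=\rho-\sigma$, the function $v=t^{-\sigma}\sfcn_{\lambda}$ is not analytic near $t=0$: the Harish-Chandra expansion gives, for generic $\lambda$, $t^{-\sigma}\sfcn_\lambda=\sum_{w\in W}\cfcn(w\cdot\lambda)\,t^{\lambda-w\cdot\lambda}\,F_{w\cdot\lambda}(t)$ with $F_{w\cdot\lambda}$ analytic near $t=0$, and for $w\neq e$ the exponents $\lambda-w\cdot\lambda$ are generically non-integral. So $v$ extends at best continuously to the edge (for $\RE\lambda$ dominant; this is Proposition~\ref{prop-Helgason-Randwert}), and mere continuity does not permit restricting $D^\sigma v$ to $t=0$ and factoring out $\ipol(D)(\sigma)$: the multiplicativity of the leading-term algebra needs $\theta_j v\to 0$ at the edge, which fails for the $w\neq e$ terms. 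The idea is repairable by replacing $\sfcn_\lambda$ with a single Harish-Chandra series $F_\lambda(t)=\sum_{\mu\in\mathbb N^\ell}\Gamma_\mu(\lambda)t^\mu$, which is $z$-independent, analytic near $t=0$ with $F_\lambda(0)=1$, and satisfies $D^\sigma F_\lambda=\chi_\lambda(D)F_\lambda$ via the radial part; restriction to $t=0$ then yields $\ipol(D)(\sigma)=\chi_{\rho-\sigma}(D)$ for generic $\sigma$, hence for all $\sigma$ by polynomiality. The paper instead obtains the identity algebraically, as in the proof of \cite[Theorem 4.3.1]{Schlichtkrull84hyperf}.
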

\begin{proof}
	
	We know, by \cite[Theorem 4.3.1]{Schlichtkrull84hyperf}, that in the coordinates of the Oshima compactification (see \cite[Chapter 4]{Schlichtkrull84hyperf} for a detailed description of the compactification), an invariant differential operator $D\in\D(X)$ can be written in the form \eqref{eq-D-Diffzero-local}. Then we invoke \cite[Proposition 11]{Oshima78realization} to observe that $\OO$ is naturally embedded as an open and dense subset, containing the boundary $B$, of the Oshima compactification. This shows the first assertion of the Proposition. The second assertion then follows exactly as in the proof of \cite[Theorem 4.3.1]{Schlichtkrull84hyperf}. 
\end{proof}
	The fact that indicial polynomials of invariant differential operators are given, as in Proposition~\ref{prop-invdo-subalg-Diffzero}, by certain $W$-invariant polynomials on $\astarc$ shows, in particular, that indicial polynomials of invariant differential operators have constant coefficients.
We also remark here that, as proved in \cite[Theorem 4.3.1]{Schlichtkrull84hyperf},
it follows from Proposition~\ref{prop-invdo-subalg-Diffzero} 
and certain properties of the algebra $I(\astarc)$
that the joint eigensystems \eqref{eq-joint-eigensys}
are regular singular in the sense of \cite[Definition 3.2]{Kashiwara78eigen}.
By definition, $\sigma\in\C^\ell$ is called a characteristic exponent of \eqref{eq-joint-eigensys}
iff $\ipol(D-\chi_{\lambda}(D))(\sigma)=0$ holds for all $D\in\D(X)$.
By Proposition~\ref{prop-invdo-subalg-Diffzero}
the characteristic exponents of \eqref{eq-joint-eigensys} are $\rho- w\cdot\lambda$, $w\in W$.

To analyze the behaviour of joint eigenfunctions at the edge $B$,
we shall use the fact that these satisfy Fuchsian type equations with respect to the walls.
See Appendix~\ref{section-Fuchsian-eqns} for the definition of Fuchsian type differential operators.
If $D$ is a differential operator which is of Fuchsian type with respect to the $j$-th wall, then
\[ \ipol_j(D)(s):= (D^{s\alpha_j} 1)|_{t_j=0}, \quad s\in\C, \]
is its indicial polynomial.
\begin{proposition}
\label{prop-weak-reg-sing}
Let $1\leq j\leq\ell$.
Set $m_j=|W/W_j|$ where $W_j$ is the subgroup of $W$ which leaves $H_j$ fixed.
There exist $D_{jk}\in\D(X)$ of order $\leq k$, and $R_j\in\Diff_b(\OO)$ such that, for $\lambda\in\astarc$,
\begin{equation}
\label{eq-def-Pj}
\begin{aligned}
P_j(\lambda) &:= \sum\nolimits _{k=1}^{m_j} \theta_j^{m_j-k}\big(D_{jk}-\chi_\lambda(D_{jk})\big) \\
    &\phantom{:}= \theta_j^{m_j} - \sum\nolimits _{k=1}^{m_j} \chi_\lambda(D_{jk}) \theta_j^{m_j-k}-t_jR_j.
\end{aligned}
\end{equation}
In particular, $P_j(\lambda)$ is of Fuchsian type with respect to the $j$-th wall, and
\[ p_{j,\lambda}(s):=\ipol_j(P_j(\lambda))(s) = s^{m_j} - \sum\nolimits _{k=1}^{m_j} \chi_\lambda(D_{jk}) s^{m_j-k} \]
is the indicial polynomial.
\end{proposition}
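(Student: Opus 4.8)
The plan is to produce the operators $D_{jk}$ via the Harish--Chandra isomorphism so that the prescribed indicial polynomial $p_{j,\lambda}$ has as its zeros exactly the $j$-th components of the characteristic exponents of \eqref{eq-joint-eigensys}, and then to identify $P_j(\lambda)$ modulo $t_j\Diff_b(\OO)$. For the construction, fix $j$ and consider, in an indeterminate $s$ and in $\mu\in\astarc$,
\[
q_j(s,\mu)=\prod\nolimits_{v\in W\cdot H_j}\bigl(s-\rho(H_j)+\mu(v)\bigr),
\]
a monic polynomial of degree $m_j=|W\cdot H_j|=|W/W_j|$ in $s$ whose coefficients are $W$-invariant polynomials in $\mu$ (replacing $\mu$ by $w_0\mu$ permutes the orbit $W\cdot H_j$), the coefficient of $s^{m_j-k}$ having degree $\le k$. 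Write $q_j(s,\mu)=s^{m_j}-\sum_{k=1}^{m_j}c_{jk}(\mu)\,s^{m_j-k}$ with $c_{jk}\in I(\astarc)$, $\deg c_{jk}\le k$, and let $D_{jk}\in\D(X)$ be the operators with $\chi_\mu(D_{jk})=c_{jk}(\mu)$. Since the Harish--Chandra isomorphism carries the order filtration of $\D(X)$ onto the degree filtration of $I(\astarc)$, $\ord D_{jk}\le k$; and $p_{j,\lambda}(s)=s^{m_j}-\sum_k\chi_\lambda(D_{jk})\,s^{m_j-k}=q_j(s,\lambda)$, whose zeros are the numbers $\rho(H_j)-\lambda(w^{-1}H_j)=(\rho-w\lambda)(H_j)$, $w\in W$, i.e.\ the $j$-th components of the characteristic exponents of \eqref{eq-joint-eigensys} (Proposition~\ref{prop-invdo-subalg-Diffzero}).

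The key identity is obtained by substituting $\mu=\rho-\sigma$ and $s=\sigma(H_j)$ into $q_j$: the factor indexed by $v=H_j$ becomes $\sigma(H_j)-\rho(H_j)+(\rho-\sigma)(H_j)=0$, so $q_j(\sigma(H_j),\rho-\sigma)\equiv 0$ and hence
\[
\sum\nolimits_{k=1}^{m_j}\chi_{\rho-\sigma}(D_{jk})\,\sigma(H_j)^{m_j-k}=\sigma(H_j)^{m_j}\qquad\text{in}\ \C[\sigma].
\]
Now $\ipol(\theta_j)(\sigma)=\sigma(H_j)$, by Proposition~\ref{prop-invdo-subalg-Diffzero} $\ipol(D)(\sigma)=\chi_{\rho-\sigma}(D)$ for $D\in\D(X)$, and $\ipol$ is a unital algebra homomorphism on $\Diff_0(\OO)$ (conjugation $D\mapsto D^\sigma$ is a homomorphism, $D^\sigma 1$ agrees with $\ipol(D)(\sigma)$ on the edge $B$, and every operator of $\Diff_b(\OO)$ preserves the ideal of analytic functions vanishing on $B$). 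Hence the displayed identity says precisely that
\[
E_j:=\sum\nolimits_{k=1}^{m_j}\theta_j^{m_j-k}D_{jk}-\theta_j^{m_j}\ \in\ \Diff_0(\OO)
\]
has $\ipol(E_j)=0$, so that $E_j\in\sum_{i=1}^{\ell}t_i\Diff_b(\OO)$.

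It remains --- and this is the crux --- to improve this to $E_j\in t_j\Diff_b(\OO)$. Granting that, set $R_j:=-t_j^{-1}E_j\in\Diff_b(\OO)$, of order $\le m_j$ since $\ord(\theta_j^{m_j-k}D_{jk})\le m_j$; then
\[
P_j(\lambda)=E_j+\theta_j^{m_j}-\sum\nolimits_k\chi_\lambda(D_{jk})\theta_j^{m_j-k}=\theta_j^{m_j}-\sum\nolimits_k\chi_\lambda(D_{jk})\theta_j^{m_j-k}-t_jR_j,
\]
which is of Fuchsian type with respect to the $j$-th wall, and forming $P_j(\lambda)^{s\alpha_j}$, applying it to $1$, and restricting to $t_j=0$ --- using $(\theta_j+s)^a1=s^a$ and $t_j\bigl(R_j^{s\alpha_j}1\bigr)\big|_{t_j=0}=0$ --- yields $\ipol_j(P_j(\lambda))(s)=s^{m_j}-\sum_k\chi_\lambda(D_{jk})s^{m_j-k}=p_{j,\lambda}(s)$. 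To obtain the missing inclusion I would use the structure of invariant operators near a single wall of the corner, beyond the indicial data of Proposition~\ref{prop-invdo-subalg-Diffzero}: modulo $t_j\Diff_b(\OO)$ an operator $D\in\D(X)$ reduces to an operator on the hypersurface $\{t_j=0\}$ commuting with the induced $G$-action, and since, in the Oshima compactification, that hypersurface is fibred over a flag variety with fibre a Riemannian symmetric space $X_j$ of rank $\ell-1$, while $\theta_j$ is $G$-invariant and central in $\Diff_b(\OO)/t_j\Diff_b(\OO)$, the image of $D$ lies in the commuting subalgebra $\D(X_j)[\theta_j]$, on which the analogue of $\ipol$ (determined by $\theta_j\mapsto\sigma(H_j)$ and the Harish--Chandra isomorphism of $X_j$) is injective; applied to $E_j$, whose image lies there with vanishing $\ipol$, this forces $E_j\in t_j\Diff_b(\OO)$. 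This structural input is essentially the description of the Oshima compactification and of invariant operators near a wall, cf.\ \cite[Theorem~4.3.1]{Schlichtkrull84hyperf} and \cite[\S4]{Kashiwara78eigen}, and amounts to the regular singularity of \eqref{eq-joint-eigensys} along each individual wall. I expect this step --- ruling out the contributions of $t_i\Diff_b(\OO)$ with $i\ne j$, which the indicial polynomial at the full edge cannot detect --- to be the real obstacle, the preceding two steps being essentially formal consequences of Proposition~\ref{prop-invdo-subalg-Diffzero} and the Harish--Chandra isomorphism.
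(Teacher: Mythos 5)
Your proposal is correct and follows essentially the same route as the paper, whose proof of this proposition is nothing more than a citation of \cite[Proposition 4.3.2 and (4.23)]{Schlichtkrull84hyperf}: your construction of the $D_{jk}$ from the $W$-invariant coefficients of $\prod_{v\in W\cdot H_j}\bigl(s-\rho(H_j)+\mu(v)\bigr)$ via the Harish--Chandra isomorphism, and the reduction to showing $\sum_k\theta_j^{m_j-k}D_{jk}-\theta_j^{m_j}\in t_j\Diff_b(\OO)$, is precisely the argument given there. You also correctly isolate the one genuinely hard point --- that the indicial polynomial at the full edge only yields membership in $\sum_i t_i\Diff_b(\OO)$, and that divisibility by $t_j$ alone requires the structure of $\D(X)$ at the single $j$-th wall (the partial Harish--Chandra homomorphism into $\D(X_j)[\theta_j]$ and its injectivity) --- which is exactly the content of the result the paper cites.
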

\begin{proof}
See Proposition 4.3.2 and formula (4.23) in \cite{Schlichtkrull84hyperf}.
\end{proof}

The polynomials $p_{j,\lambda}$ and $\ipol(D)$, $D\in\D(X)$, have constant coefficients.
So, for $\sigma\in\C^\ell$, the first equality of
\[
\ipol_j(P_j(\lambda))(\sigma_j) = \ipol(P_j(\lambda))(\sigma)
     = \sum\nolimits _{k=1}^{m_j} \ipol\big(D_{jk}-\chi_\lambda(D_{jk})\big)(\sigma) \sigma_j^{m_j-k}
\]
holds meaningfully in $\C$, the second follows from \eqref{eq-def-Pj}.
The roots of the indicial equation $p_{j,\lambda}(s)=0$ are called the
characteristic exponents of $P_j(\lambda)$ with respect to the $j$-th wall.
The displayed formula implies that the $j$-th components of the characteristic exponents
of the eigensystem \eqref{eq-joint-eigensys} are characteristic exponents of $P_j(\lambda)$.
\begin{corollary}
\label{cor-charexp-Pj}
The characteristic exponents of $P_j(\lambda)$ are $(\rho-w\cdot\lambda)(H_j)$, $w\in W$.
The characteristic exponent $(\rho-\lambda)(H_j)$ is simple 
iff $\lambda(H_j-w\cdot H_j)\neq 0$ holds for every $w\in W\setminus W_j$.
\end{corollary}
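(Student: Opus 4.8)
The plan is to read off both assertions directly from Proposition~\ref{prop-weak-reg-sing} together with the description of the characteristic exponents of the eigensystem~\eqref{eq-joint-eigensys} established above. For the first assertion, recall that by Proposition~\ref{prop-weak-reg-sing} the indicial polynomial of $P_j(\lambda)$ with respect to the $j$-th wall is $p_{j,\lambda}(s)=s^{m_j}-\sum_{k=1}^{m_j}\chi_\lambda(D_{jk})s^{m_j-k}$, so its roots are precisely the characteristic exponents of $P_j(\lambda)$. By the displayed formula preceding the corollary, these coincide with the $j$-th components of the characteristic exponents of the joint eigensystem~\eqref{eq-joint-eigensys}; by Proposition~\ref{prop-invdo-subalg-Diffzero} the latter exponents are exactly $\rho-w\cdot\lambda$, $w\in W$. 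Hence the roots of $p_{j,\lambda}$, counted as a set, are the numbers $(\rho-w\cdot\lambda)(H_j)$, $w\in W$. One still has to check that $p_{j,\lambda}$, a degree-$m_j$ polynomial, is in fact $\prod_{wW_j\in W/W_j}(s-(\rho-w\cdot\lambda)(H_j))$, i.e.\ that the multiplicities match: since $(w\cdot\lambda)(H_j)$ depends only on the coset $wW_j$ (because $W_j$ fixes $H_j$), and $|W/W_j|=m_j=\deg p_{j,\lambda}$, the product over the $m_j$ cosets already has the right degree; it divides $p_{j,\lambda}$ because every such value is a root, and both are monic, so they are equal. This identifies the characteristic exponents of $P_j(\lambda)$ with multiplicity.

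For the second assertion, we ask when the particular exponent $(\rho-\lambda)(H_j)$ (the value obtained from the coset $W_j=eW_j$) is a simple root of $p_{j,\lambda}$. From the factorization just obtained, $(\rho-\lambda)(H_j)$ is simple iff no other coset $wW_j\neq W_j$ contributes the same value, i.e.\ iff $(\rho-w\cdot\lambda)(H_j)\neq(\rho-\lambda)(H_j)$ for all $w\in W\setminus W_j$. Subtracting, this says $(\lambda - w\cdot\lambda)(H_j)\neq 0$, equivalently $\lambda(H_j)-\lambda(w^{-1}\cdot H_j)\neq 0$ (using that $w$ is an orthogonal transformation and moving it across the pairing), i.e.\ $\lambda(H_j - w^{-1}\cdot H_j)\neq 0$. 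As $w$ ranges over $W\setminus W_j$ so does $w^{-1}$, so the condition is $\lambda(H_j - w\cdot H_j)\neq 0$ for every $w\in W\setminus W_j$, which is exactly the stated criterion.

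The only mild subtlety, and the step I would be most careful about, is the bookkeeping of multiplicities: one must be sure that distinct cosets $wW_j$ really do give the full list of roots of $p_{j,\lambda}$ with the correct total multiplicity, rather than, say, $p_{j,\lambda}$ having a repeated factor forced by the structure of $\chi_\lambda(D_{jk})$ independently of $\lambda$. This is handled by the degree count $\deg p_{j,\lambda}=m_j=|W/W_j|$ from Proposition~\ref{prop-weak-reg-sing}, which leaves no room for extra multiplicity beyond what the coset values already account for; everything else is the elementary manipulation of the pairing $\lambda(H_j)$ under the $W$-action indicated above. No hyperfunction- or distribution-theoretic input is needed for this corollary — it is purely a statement about the indicial polynomial and the $W$-orbit of $\lambda$.
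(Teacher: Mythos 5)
Your reduction of both assertions to the factorization $p_{j,\lambda}(s)=\prod_{wW_j\in W/W_j}\bigl(s-(\rho-w\cdot\lambda)(H_j)\bigr)$ is the right target, but the step you yourself single out as the delicate one --- the multiplicity bookkeeping --- is exactly where the argument breaks. The displayed formula preceding the corollary gives only the inclusion $\{(\rho-w\cdot\lambda)(H_j)\mid w\in W\}\subseteq\{s\mid p_{j,\lambda}(s)=0\}$, not the coincidence of the two sets, and your divisibility claim (``the product divides $p_{j,\lambda}$ because every such value is a root'') fails whenever two distinct cosets $wW_j\neq w'W_j$ produce the same value: knowing that a value $c$ is a root yields one factor $(s-c)$, not $(s-c)^{n}$ with $n$ the number of cosets hitting $c$. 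The degree count $\deg p_{j,\lambda}=m_j=|W/W_j|$ does not repair this; with $m_j=3$ and coset values $c,d,d$, nothing in your argument rules out $p_{j,\lambda}=(s-c)^2(s-d)$. And the degenerate case is not a removable corner case: the first assertion is claimed for every $\lambda$, and the second assertion's ``only if'' direction (and even its ``if'' direction, since the remaining $m_j-1$ coset values may coincide among themselves) lives precisely where coset values collide.

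The paper closes this gap by a perturbation argument. It first restricts to the dense open set $\Lambda$ of those $\lambda$ for which all $m_j$ coset values are pairwise distinct; there your counting argument is valid and gives \eqref{eq-root-inclusion-for-pj} with all roots simple. It then uses that the roots of the monic polynomial $p_{j,\lambda}$ depend continuously on its coefficients, hence on $\lambda$, to extend the set equality to all $\lambda$; for the simplicity criterion it counts roots of $p_{j,\mu}$, $\mu\in U\cap\Lambda$ near $\lambda$, in a small disk about $s_0=(\rho-\lambda)(H_j)$. (Alternatively, both sides of your asserted factorization are polynomials in $(\lambda,s)$ agreeing on the dense set $\Lambda\times\C$, hence identically; either route supplies the multiplicity statement your proof needs.) You should insert one of these arguments; the remaining manipulation of the pairing under the $W$-action, including the passage from $w$ to $w^{-1}$, is fine.
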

\begin{proof}
The first assertion of the corollary states that
\begin{equation}
\label{eq-root-inclusion-for-pj}
\{\rho(H_j)-\lambda(w\cdot H_j)\mid w\in W\} = \{s\in\C\mid p_{j,\lambda}(s)=0\}
\end{equation}
holds.
We already know that the left-hand side is contained in the right-hand side.
Define $\Lambda$ as the set of $\lambda$'s which satisfy $\lambda(w\cdot H_j-w'\cdot H_j)\neq 0$
whenever $w, w'\in W$ and $w^{-1}w'\not\in W_j$.
If $\lambda\in\Lambda$, then the cardinalities of the sets on
either side of \eqref{eq-root-inclusion-for-pj} are equal to $m_j$.
So, for $\lambda\in\Lambda$, \eqref{eq-root-inclusion-for-pj} holds,
and all roots of the polynomial $p_{j,\lambda}$ are simple.
The set $\Lambda$ is open and dense in $\astarc$.
The roots of the monic polynomials $p_{j,\lambda}$ depend continuously on the coefficients, hence on $\lambda$.
It follows that \eqref{eq-root-inclusion-for-pj} holds for every $\lambda\in\astarc$.

To prove the second assertion of the corollary, we assume there exists $\eps>0$
such that $|\lambda(H_j-w\cdot H_j)|>2\eps$ if $w\in W\setminus W_j$.
We have to show that the root $s_0=(\rho-\lambda)(H_j)$ of $p_{j,\lambda}$ is simple.
Choose an open neighbourhood $U\subset\astarc$ of $\lambda$
such that $|(\lambda-\mu)(H_j-w\cdot H_j)|<\eps$ holds for $w\in W$ and $\mu\in U$.
Hence,
$|s_0-(\rho(H_j)-\mu(w\cdot H_j))|>\eps$ if $w\in W\setminus W_j$.
Using \eqref{eq-root-inclusion-for-pj} with $\mu\in U\cap\Lambda$ instead of $\lambda$,
we find that $p_{j,\mu}(s)=0$ has, counting multiplicities,
at most one root in the disk with center $s_0$ and radius $\eps$.
The simplicity of the root $s_0$ of $p_{j,\lambda}$ follows from this observation.
\end{proof}

In the construction of an inverse of the Poisson transform,
we shall divide by the holomorphic function $p:\astarc\to\C$,
\begin{equation}
\label{eq-def-p-lambda}
p(\lambda) = \prod\nolimits _{j=1}^\ell \big(\frac{\intd}{\intd s}\big|_{s=0}\big) p_{j,\lambda}(\rho_j-\lambda_j+s).
\end{equation}
It follows from Corollary~\ref{cor-charexp-Pj} that $p(\lambda)=0$ if and only if
there exist $j\in\{1,\ldots,\ell\}$ and $w\in W\setminus W_j$ such $\lambda(H_j-w\cdot H_j)=0$.
In particular, $p$ is not identically zero.

\begin{remark}
Let $\Phi$ be a local diffeomorphism \eqref{eq-Phi-edge-diffeom} around the edge,
$\Phi^*$ the pullback by $\Phi$, and $\Phi^{-*}=(\Phi^*)^{-1}$.
If $D\in\Diff_0(\OO)$, then $\Phi^{-*}D\Phi^*\in\Diff_0(\OO)$.
The same implication holds with $\Diff_b(\OO)$ instead of $\Diff_0(\OO)$.
These assertion are clear after a straightforward calculation for the vector fields $D=\theta_j$ and $D=\dd{z_k}$.
Furthermore, $\Phi^{-*}P_j(\lambda)\Phi^*$ is of Fuchsian type with respect to the $j$-wall,
and its indicial polynomial equals that of $P_j(\lambda)$.
\end{remark}

\section{Extension of eigenfunctions}
In this section, following the proof of \cite[Theorem 5.12]{KashiwaraOshima77},
we extend joint eigenfunctions from $\OOplus$
across the edge as joint eigenfunctions in $\OO$ with support contained in $\OOplusclosure$.
\begin{lemma}
\label{lemma-Pj-commutation}
Let $D\in\D(X)$.
There exist $S_{jk}\in\Diff_b(\OO)$, $t_j S_{jk}\in\Diff_0(\OO)$, such that
\[ [P_j(\lambda),D] = \sum\nolimits _{k=1}^{m_j} t_j S_{jk}\big(D_{jk}-\chi_\lambda(D_{jk})\big), \]
and $\ord(S_{jk})+\ord(D_{jk})<m_j+\ord(D)$.
\end{lemma}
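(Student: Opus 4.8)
The plan is to exploit the defining formula \eqref{eq-def-Pj} for $P_j(\lambda)$ and to expand the commutator termwise. Write $E_{jk}=D_{jk}-\chi_\lambda(D_{jk})$, so that $P_j(\lambda)=\sum_{k=1}^{m_j}\theta_j^{m_j-k}E_{jk}$. Since the commutator is a derivation, $[P_j(\lambda),D]=\sum_{k=1}^{m_j}\big([\theta_j^{m_j-k},D]\,E_{jk}+\theta_j^{m_j-k}[E_{jk},D]\big)$. The second group of terms is the easy one: $[E_{jk},D]=[D_{jk},D]$ because $\chi_\lambda(D_{jk})$ is a scalar, and $[D_{jk},D]=0$ since $\D(X)$ is commutative. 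So only the first group survives, and everything reduces to controlling $[\theta_j^{m_j-k},D]$ for $D\in\D(X)\subset\Diff_0(\OO)$.

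The key observation is that $[\theta_j,\,\cdot\,]$ raises neither the order in $\Diff_0(\OO)$ nor disturbs tangency, and in fact produces a factor of $t_j$ whenever it hits a coefficient that does not already carry one. Concretely, using the local form \eqref{eq-D-Diffzero-local} of $D=\sum_{p,q}a_{p,q}(t\dd{z})^q\theta^p$: one has $[\theta_j,\theta_k]=0$, $[\theta_j,t_i\dd{z_m}]=\delta_{ji}t_i\dd{z_m}$, and $[\theta_j,a]=t_j\partial_{t_j}a$ for an analytic coefficient $a$; the last is again analytic times $t_j$. Hence $[\theta_j,D]=t_jD'$ for some $D'\in\Diff_0(\OO)$ with $\ord(D')\le\ord(D)$ — the point being that the bracket cannot increase the number of $\theta$'s or $t\dd z$'s, it only differentiates coefficients (gaining a $t_j$) or reproduces a $t\dd z$ factor (again effectively available, since $t_j\dd{z_m}\in\V_0(\OO)$). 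Iterating, $[\theta_j^{m_j-k},D]=\sum_{i=0}^{m_j-k-1}\theta_j^i[\theta_j,D]\theta_j^{m_j-k-1-i}$, and each term is $\theta_j^i\,t_j D'\,\theta_j^{m_j-k-1-i}=t_j\,(\theta_j+1)^i D'\,\theta_j^{m_j-k-1-i}$ after commuting $t_j$ leftward (using $\theta_jt_j=t_j(\theta_j+1)$). Collecting, $[\theta_j^{m_j-k},D]=t_j\widetilde S_{jk}$ with $\widetilde S_{jk}\in\Diff_0(\OO)$ and $\ord(\widetilde S_{jk})\le m_j-k-1+\ord(D)$.

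Putting the pieces together, $[P_j(\lambda),D]=\sum_{k=1}^{m_j}t_j\widetilde S_{jk}\,E_{jk}$, and since $E_{jk}=D_{jk}-\chi_\lambda(D_{jk})$ this is exactly the claimed shape once we set $t_jS_{jk}:=t_j\widetilde S_{jk}$, i.e. $S_{jk}:=\widetilde S_{jk}\in\Diff_b(\OO)$ with $t_jS_{jk}=t_j\widetilde S_{jk}\in\Diff_0(\OO)$. The order bound reads $\ord(S_{jk})+\ord(D_{jk})\le(m_j-k-1+\ord(D))+\ord(D_{jk})$, and since $\ord(D_{jk})\le k$ by Proposition~\ref{prop-weak-reg-sing}, this is $\le m_j-1+\ord(D)<m_j+\ord(D)$, as required. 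The main obstacle is the bookkeeping in the iteration: one must check carefully that commuting $t_j$ to the left through powers of $\theta_j$ keeps everything inside $\Diff_0(\OO)$ (it does, since $\theta_jt_j=t_j(\theta_j+1)$ and $\Diff_0(\OO)$ is generated by $\V_0(\OO)$), and that the single extra $t_j$ extracted at each bracket is never "spent" — i.e. that the whole remainder after factoring one $t_j$ still lies in $\Diff_0(\OO)$ and not merely in $\Diff_b(\OO)$; this is why we only pull out one factor of $t_j$ even though $[\theta_j^{m_j-k},D]$ a priori carries one $t_j$ per bracket step.
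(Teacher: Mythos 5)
Your proposal is correct and takes essentially the same route as the paper's proof: expand $[P_j(\lambda),D]$ termwise via \eqref{eq-def-Pj}, discard the $[D_{jk},D]$ terms by commutativity of $\D(X)$, and verify from the local form \eqref{eq-D-Diffzero-local} that $[\theta_j^{i},D]\in\Diff_0(\OO)\cap t_j\Diff_b(\OO)$ with order $<i+\ord(D)$. One tiny overstatement: the factor $D'$ in $[\theta_j,D]=t_jD'$ need not itself lie in $\Diff_0(\OO)$ (e.g.\ $[\theta_j,t_j\dd{z_m}]=t_j\dd{z_m}$ gives $D'=\dd{z_m}$), but $D'\in\Diff_b(\OO)$ and $t_jD'\in\Diff_0(\OO)$, which is exactly what the lemma asserts and what your concluding sentence correctly records.
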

\begin{proof}
By \eqref{eq-def-Pj} we have
$[P_j(\lambda),D] = \sum\nolimits _{k=1}^{m_j} [\theta_j^{m_j-k},D]\big(D_{jk}-\chi_\lambda(D_{jk})\big)$.
Using $D\in\Diff_0(\OO)$ and \eqref{eq-D-Diffzero-local}, the commutators $[\theta_j^i,D]$ are seen to
belong to $\Diff_0(\OO)\cap t_j\Diff_b(\OO)$.
The orders are $<i+\ord(D)$.
\end{proof}
We deal with solutions in $\OO$ of $\sigma$-shifted eigensystems
\begin{equation}
\label{eq-joint-eigensys-shifted}
D^\sigma \tilde u=\chi_{\lambda}(D)\tilde u,\quad D\in\D(X),
\end{equation}
where $\sigma=\rho-\lambda$ is a characteristic exponent of $\D(X)$.

\begin{proposition}
\label{prop-extension-of-solns}
Let $\lambda\in\C^\ell$.
Assume that, for $w\in W$ and $j=1,\ldots,\ell$,
$(\lambda-w\cdot\lambda)(H_j)$ is not a negative integer.
Set $\sigma=\rho-\lambda$.
Let $u\in\Eigen_\lambda(X)$.
There is a unique $\tilde u\in\Hfcn(\OO)$ such that
\eqref{eq-joint-eigensys-shifted} holds in $\OO$,
$\tilde u|_{\OOplus}=t^{-\sigma} u$, and $\supp(\tilde u)\subseteq \OOplusclosure$.
\end{proposition}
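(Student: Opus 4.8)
The plan is to reduce the global extension problem to the one-wall Fuchsian situation handled in Appendix~\ref{section-Fuchsian-eqns}, and then to propagate the extension from the open octant across the faces of the corner by induction on the number of walls we have crossed. We work throughout in the coordinates $(z,t)\in\OO$ and with the conjugated operators $D^\sigma$, $D\in\D(X)$. The key point is that, for $\sigma=\rho-\lambda$, the conjugated eigensystem \eqref{eq-joint-eigensys-shifted} is again a regular singular system, whose characteristic exponents at the edge, by Proposition~\ref{prop-invdo-subalg-Diffzero} and the shift $D\mapsto D^\sigma$, are $\sigma-(\rho-w\cdot\lambda)=(w\cdot\lambda-\lambda)$, $w\in W$. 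The hypothesis that $(\lambda-w\cdot\lambda)(H_j)$ is never a negative integer says precisely that the $j$-th components of these shifted exponents have no negative integer values; this is the non-resonance condition needed to solve Fuchsian equations with data supported in $\{t_j\ge 0\}$.

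First I would treat a single wall. Fix $j$ and consider the operator $P_j(\lambda)^\sigma=t^{-\sigma}P_j(\lambda)t^\sigma$, which by the Remark following Corollary~\ref{cor-charexp-Pj} (applied to the conjugation, or directly from $t^{-s}\theta_j t^s=\theta_j+s_j$) is again of Fuchsian type with respect to the $j$-th wall, with indicial polynomial $s\mapsto p_{j,\lambda}(\sigma_j+s)$, whose roots are the $(w\cdot\lambda-\lambda)(H_j)$. By hypothesis none of these is a negative integer, so the Baouendi--Goulaouic theory of Appendix~\ref{section-Fuchsian-eqns} gives, for the datum $v:=t^{-\sigma}u\in\Hfcn(\OOplus)$ (or the corresponding half-space $\{t_j>0\}$), a unique $\tilde v$ defined on a two-sided neighbourhood of the $j$-th wall, supported in $\{t_j\ge 0\}$, with $P_j(\lambda)^\sigma\tilde v=0$ there and $\tilde v=v$ on $\{t_j>0\}$. (Here one uses that $v$ itself solves $P_j(\lambda)^\sigma v=0$ on $\OOplus$, since $u\in\Eigen_\lambda(X)$ kills each $D_{jk}-\chi_\lambda(D_{jk})$.) This produces a candidate extension across the $j$-th wall; to extend across the edge $B=\{t=0\}$ one applies this successively for $j=1,\dots,\ell$, at each stage on the partially-extended domain, using that the $\theta_k$ for $k\ne j$ and the $\dd{z_i}$ are tangent to $\{t_j=0\}$ so the already-imposed equations persist.

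It remains to check that the function $\tilde u$ obtained this way actually satisfies the \emph{full} shifted eigensystem \eqref{eq-joint-eigensys-shifted}, not merely the $\ell$ Fuchsian equations $P_j(\lambda)^\sigma\tilde u=0$, and that it is unique. For uniqueness: any two solutions supported in $\OOplusclosure$ agreeing on $\OOplus$ differ by something supported in $B$, and a Fuchsian operator with non-resonant indicial roots (no negative integers) annihilates no nonzero distribution supported in its wall — this is again part of Appendix~\ref{section-Fuchsian-eqns}. For the full system: set $f_D:=(D^\sigma-\chi_\lambda(D))\tilde u$ for $D\in\D(X)$; it vanishes on $\OOplus$ and is supported in $\OOplusclosure$, so it suffices to show $f_D=0$ near $B$. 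Using Lemma~\ref{lemma-Pj-commutation} (conjugated by $t^\sigma$), $[P_j(\lambda)^\sigma,D^\sigma]=\sum_k t_jS_{jk}^\sigma(D_{jk}^\sigma-\chi_\lambda(D_{jk}))$, one computes $P_j(\lambda)^\sigma f_D=[P_j(\lambda)^\sigma,D^\sigma]\tilde u=\sum_k t_jS_{jk}^\sigma f_{D_{jk}}$; thus the tuple $(f_{D_{jk}})$ solves a homogeneous-in-the-leading-order Fuchsian system with coefficients vanishing at $t_j=0$, data $0$ on $\OOplus$, and the order bound $\ord(S_{jk})+\ord(D_{jk})<m_j+\ord(D)$ lets one run an induction on $\ord(D)$. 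The non-resonance hypothesis again forces $f_D=0$. I expect this last verification — propagating the vanishing of the full eigensystem through the commutator identities while keeping track of the Fuchsian weights $t_j$ and the order filtration — to be the main obstacle; it is exactly where the argument of \cite[Theorem 5.12]{KashiwaraOshima77} is adapted, and the bookkeeping of which equations survive each successive wall-crossing is the delicate part.
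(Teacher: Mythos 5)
Your proposal follows essentially the same route as the paper: conjugate by $t^{-\sigma}$, cross the walls one at a time using the Fuchsian operators $P_j(\lambda)^\sigma$ and the supported-solution results of Appendix~\ref{section-Fuchsian-eqns}, and then use the commutator identity of Lemma~\ref{lemma-Pj-commutation} to show the full shifted eigensystem survives each crossing. (Your intermediate sign $\sigma-(\rho-w\cdot\lambda)=w\cdot\lambda-\lambda$ should be $(\rho-w\cdot\lambda)-\sigma=\lambda-w\cdot\lambda$, but your conclusion that the hypothesis is exactly the non-resonance condition $p_{j,\lambda}(\sigma_j-n)\neq 0$ is correct.) The one step you flag as the main obstacle is indeed where your sketch does not quite close: the identity $P_j(\lambda)^\sigma f_D=\sum_k t_jS_{jk}^\sigma f_{D_{jk}}$ expresses $f_D$ in terms of the $f_{D_{jk}}$ for the \emph{fixed} operators $D_{jk}$ of \eqref{eq-def-Pj}, whose orders are not smaller than $\ord(D)$ in general, so an induction on $\ord(D)$ has nothing to recurse on. The paper instead fixes a finite generating set $D_1,\dots,D_N$ of $\D(X)$ containing the $D_{jk}$, assembles the column vector $Q u$ with entries $(D_k^\sigma-\chi_\lambda(D_k))u$, and reads the commutator identity as a single $N\times N$ Fuchsian system $(P-t_jS)Qu=0$ with scalar indicial polynomial $p_{j,\lambda}(\sigma_j+s)I_N$; Remark~\ref{remark-Fuchsian-general-NbyN} extends Corollary~\ref{cor-fuchsian-hyperfunc-soln} to such systems, and the non-resonance hypothesis then forces $Qu=0$ since $Qu$ is supported in the wall. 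The order bound in Lemma~\ref{lemma-Pj-commutation} is what makes this matrix system admissible, not the engine of an induction. With that replacement your argument coincides with the paper's.
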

\begin{proof}
By Corollary~\ref{cor-charexp-Pj} and the assumption,
\begin{equation}
\label{eq-non-integer-char-exponents-Pjlambdasigma}
p_{j,\lambda}(\sigma_j-n)\neq 0\quad\text{holds for $j=1,\ldots,\ell$ and $n=1,2,3,\ldots$.}
\end{equation}
Set $O_j=\tilde O\cap \{t_{j+1}>0,\ldots, t_\ell>0\}$, $O_0=O$.

There is a unique $u_1\in\Hfcn(O_1)$ satisfying
\[ P_1(\lambda)^\sigma u_1=0, \quad u_1|_{O_0}=t^{-\sigma}u, \quad \supp(u_1)\subseteq\{t_1\geq 0\}. \]
In fact, by the flabbyness of the sheaf $\Hfcn$, there exists a supported extension $v_1\in\Hfcn(O_1)$
of $t^{-\sigma}u$, that is $v_1|_{O_0}=t^{-\sigma}u$ and $\supp(v_1)\subseteq\{t_1\geq 0\}$ hold.
By \eqref{eq-non-integer-char-exponents-Pjlambdasigma}
and Corollary~\ref{cor-fuchsian-hyperfunc-soln} there is a unique $w_1\in\Hfcn(O_1)$, supported
in the hypersurface $\{t_1=0\}$, which solves $P_1(\lambda)^\sigma w_1= P_1(\lambda)^\sigma v_1$.
Now set $u_1=v_1-w_1$.

We claim that $D^{\sigma} u_1=\chi_\lambda(D)u_1$ holds in $O_1$ for every $D\in\D(X)$.
It suffices to prove this for generators $D_1,\ldots,D_N$ of $\D(X)$.
We assume that the $D_{1k}$'s in \eqref{eq-def-Pj} are among the generators.
By Lemma~\ref{lemma-Pj-commutation} we have
$[P_1(\lambda),D_n-\chi_\lambda(D_n)] = \sum\nolimits _{k} t_1 S_{nk}\big(D_{k}-\chi_\lambda(D_{k})\big)$
with $S_{nk}\in\Diff_b(\OO)$, $\ord(S_{nk})+\ord(D_{k})<m_1+\ord(D_n)$.
Thus
\begin{equation}
\label{eq-FuchsionOp-commutator}
[P_1(\lambda)^\sigma,D_n^\sigma-\chi_\lambda(D_n)] = \sum\nolimits _{k} t_1 S_{nk}^\sigma\big(D_{k}^\sigma-\chi(D_{k})\big).
\end{equation}
Denote by $I_N$ the $N\times N$-unit matrix.
Define the $N\times N$-matrices $P=P_1(\lambda)^\sigma I_N$, $S=(S_{nk}^\sigma)$,
and the column vector $Q$ with $k$-th component $D_k^\sigma-\chi_\lambda(D_k)$.
Then formula~\eqref{eq-FuchsionOp-commutator} reads
\[(P-t_1S)Q=QP_1(\lambda)^\sigma.\]
It follows that $(P-t_1S)Qu_1=0$.
Moreover, $\supp(Qu_1)\subseteq \{t_1=0\}$.
So $P-t_1S$ is a Fuchsian type system with (matrix-valued) indicial polynomial $p_{1,\lambda}(\sigma_1+s)I_N$.
Recall \eqref{eq-non-integer-char-exponents-Pjlambdasigma}.
In view of Remark~\ref{remark-Fuchsian-general-NbyN}, Corollary~\ref{cor-fuchsian-hyperfunc-soln}
applies to show that solutions of $(P-t_1S)v=0$ which are supported in $t_1 = 0$ must vanish.
Thus $Qu_1=0$, proving the claim.

Next, arguing as before, we find a unique $u_2\in\Hfcn(O_2)$ such that 
$P_2(\lambda)^{\sigma} u_2=0$, $u_2|_{O_1}=u_1$, $\supp(u_2)\subseteq \{t_2\geq 0\}$,
and we then deduce that $D^{\sigma} u_2=\chi_\lambda(D)u_2$ holds in $O_2$ for every $D\in\D(X)$.
Continuing in this way, we obtain $\tilde u=u_\ell$.
The uniqueness of the extension $\tilde u$ follows because, for each $j$,
$\tilde u|_{O_j}$ is uniquely determined by $\tilde u|_{O_{j-1}}$.
\end{proof}
If \eqref{eq-joint-eigensys-shifted} holds and $\supp(\tilde u)\subseteq \OOplusclosure\setminus\OOplus$,
then, under the assumptions on $\lambda$ in the proposition, $\tilde u=0$.
This is seen by inspection of the preceding proof.

Next we define an extension operator between Sobolev spaces of negative order
which maps eigenfunctions into $\sigma$-shifted eigenfunctions on $\OO$.
\begin{proposition}
\label{prop-extension-of-solns-Sobolev}
Let $\lambda\in\C^\ell$.
Assume that, for $w\in W$ and $j=1,\ldots,\ell$,
$(\lambda-w\cdot\lambda)(H_j)$ is not a negative integer.
Set $\sigma=\rho-\lambda$.
Let $0\leq m\in\Z$.
There exists a bounded linear operator $V:\Sob{-m}(\OOplus)\to\Sob{-m'}(\OO)$, $m'$ some integer $\geq m$,
such that $\supp(Vu)\subseteq\OOplusclosure$, $(Vu)|_{\OOplus}=t^{-\sigma}u$, and,
if $u\in\Eigen_\lambda(X)$, then $Vu$ satisfies
$(D^\sigma-\chi_\lambda(D))Vu=0$ for all $D\in\D(X)$.
Moreover, $V$ depends holomorphically on $\lambda$,
provided $\lambda$ ranges in a bounded open set in which the assumption holds.
\end{proposition}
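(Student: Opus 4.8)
The plan is to upgrade the hyperfunction-valued extension of Proposition~\ref{prop-extension-of-solns} to a bounded linear operator on Sobolev spaces of negative order, by re-running the inductive construction of that proof inside the scale $\Sob{-m}(\OO)$ instead of $\Hfcn(\OO)$, and controlling norms at each step. The key point is that at every stage of the induction we solve a Fuchsian type equation with data supported in a wall, and the distributional version of Baouendi--Goulaouic (Appendix~\ref{section-Fuchsian-eqns}, in the form used for Corollary~\ref{cor-fuchsian-hyperfunc-soln}) provides such solutions together with \emph{quantitative} bounds: the solution operator is bounded $\Sob{-m}\to\Sob{-m'}$ with $m'$ depending only on $m$, the order of the Fuchsian operator, and a lower bound for $|p_{j,\lambda}(\sigma_j-n)|$ over $n\geq 1$, which by \eqref{eq-non-integer-char-exponents-Pjlambdasigma} is locally uniform in $\lambda$.

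\textbf{Step 1 (supported extension with norm control).} Given $u\in\Sob{-m}(\OOplus)$, the product $t^{-\sigma}u$ is again in $\Sob{-m}(\OOplus)$ after possibly enlarging $m$ by $\lceil|\RE\sigma|\rceil$ or so, since multiplication by $t^{-\sigma}$ is a continuous automorphism of $\cup_m\Sob{-m}(\OOplus)$ (this uses $\OOplus\subset\OO$ and the usual mapping properties of $t^{s}$ on negative Sobolev spaces). Extension by zero across $\{t_1=0\}$ realizes $t^{-\sigma}u$ as an element $v_1\in\Sob{-m_1}(O_1)$ with $\supp v_1\subseteq\{t_1\geq0\}$ and norm bounded by that of $u$; this is the Sobolev-space substitute for the flabbyness argument. \textbf{Step 2 (Fuchsian correction).} Apply $P_1(\lambda)^\sigma$ to $v_1$, observe the result is supported in $\{t_1=0\}$, and use the distributional Fuchsian solvability of Appendix~\ref{section-Fuchsian-eqns} to produce the unique $w_1$ supported in $\{t_1=0\}$ with $P_1(\lambda)^\sigma w_1=P_1(\lambda)^\sigma v_1$; set $u_1=v_1-w_1$. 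Here one records the operator norm bound. \textbf{Step 3 (eigenfunction property).} When $u\in\Eigen_\lambda(X)$, the commutator computation of Lemma~\ref{lemma-Pj-commutation}, cast in matrix form exactly as in the proof of Proposition~\ref{prop-extension-of-solns} via \eqref{eq-FuchsionOp-commutator} and Remark~\ref{remark-Fuchsian-general-NbyN}, forces $Q u_1=0$, i.e. $D^\sigma u_1=\chi_\lambda(D)u_1$; this is purely algebraic/sheaf-theoretic and requires no new estimates since it is a uniqueness statement for supported solutions. \textbf{Step 4 (iterate over the walls).} Repeat Steps 1--3 with $P_2(\lambda),\ldots,P_\ell(\lambda)$ to peel off the remaining walls, each time increasing the Sobolev order by a controlled amount, and put $V u=u_\ell$. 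The resulting $V$ is a composition of finitely many bounded operators, hence bounded $\Sob{-m}(\OOplus)\to\Sob{-m'}(\OO)$ with $m'$ depending only on $m$ and a neighbourhood of $\lambda$.

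\textbf{Holomorphic dependence} follows because each constituent operator depends holomorphically on $\lambda$ in the stated Banach-space-valued sense: multiplication by $t^{-\sigma}=t^{-(\rho-\lambda)}$ is plainly holomorphic in $\lambda$; the Fuchsian solution operator from Appendix~\ref{section-Fuchsian-eqns} depends holomorphically on the coefficients of the operator, which by Proposition~\ref{prop-weak-reg-sing} and Proposition~\ref{prop-invdo-subalg-Diffzero} are polynomial (hence holomorphic) in $\lambda$, and the relevant resolvent-type bounds $|p_{j,\lambda}(\sigma_j-n)|^{-1}$ are locally uniformly bounded on the open set where the hypothesis holds; a composition of holomorphic operator-valued maps is holomorphic. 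Finally $(Vu)|_{\OOplus}=t^{-\sigma}u$ and $\supp(Vu)\subseteq\OOplusclosure$ are built into the construction, and the uniqueness half of Proposition~\ref{prop-extension-of-solns} identifies $Vu$ with the hyperfunction extension there, so consistency across the two settings is automatic.

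\textbf{The main obstacle} I anticipate is making precise the quantitative Fuchsian solvability in Steps~1--2 in the right functional setting: one needs the Baouendi--Goulaouic machinery not just to produce a unique supported distributional solution but to do so with an operator bound between fixed negative Sobolev spaces that is locally uniform in $\lambda$, including the control of how much the Sobolev order must drop at each wall and how the edge $B$'s own regularity interacts with the $t_j$-directions. Provided Appendix~\ref{section-Fuchsian-eqns} is formulated with such bounds (which is exactly what is needed for Corollary~\ref{cor-fuchsian-hyperfunc-soln} to have a distributional counterpart), the rest is bookkeeping; the eigenfunction-property argument, being a uniqueness statement, carries over verbatim from Proposition~\ref{prop-extension-of-solns}.
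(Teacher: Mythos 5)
Your overall strategy---peel off the walls one at a time, at each wall produce a supported extension and correct it by a supported solution of a Fuchsian type equation, and derive the eigenfunction property from Lemma~\ref{lemma-Pj-commutation} exactly as in Proposition~\ref{prop-extension-of-solns}---is the paper's, and the quantitative ingredient you correctly flag as the main obstacle (a bounded, $\lambda$-holomorphic Fuchsian solution operator between negative Sobolev spaces of distributions supported in a wall) is supplied by Proposition~\ref{prop-Fuchsian-in-Sobolev}. However, Step~2 contains a genuine gap. You apply $P_1(\lambda)^\sigma$ to the supported extension $v_1$ of $t^{-\sigma}u$ and ``observe the result is supported in $\{t_1=0\}$''. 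That observation is false for general $u\in\Sob{-m}(\OOplus)$: restricted to $O_0$, $P_1(\lambda)^\sigma v_1$ equals $P_1(\lambda)^\sigma(t^{-\sigma}u)$, which vanishes only when $u$ is annihilated by $P_1(\lambda)$, e.g.\ when $u$ is a joint eigenfunction. Hence the correction $w_1$ (a wall-supported solution of $P_1(\lambda)^\sigma w_1=P_1(\lambda)^\sigma v_1$) does not exist for general $u$, and your $V$ is only defined on eigenfunctions. The proposition demands a bounded operator on all of $\Sob{-m}(\OOplus)$---this is what makes the boundedness and holomorphy statements meaningful and what is later used to define $\bv_{\sigma,m}$ on the whole Sobolev space in Proposition~\ref{prop-defn-bv}. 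The paper's device is to feed the solution operator not $P_j(\lambda)^\sigma E_m u_{j-1}$ but the commutator $\big(P_j(\lambda)^\sigma E_m-E_{2m}P_j(\lambda)^\sigma\big)u_{j-1}$, which is supported in the wall $N_j$ for \emph{every} input (both terms restrict to $P_j(\lambda)^\sigma u_{j-1}$ on $O_{j-1}$), and to set $V_j(\lambda)=E_m-S(\lambda)\big(P_j(\lambda)^\sigma E_m-E_{2m}P_j(\lambda)^\sigma\big)$; when $u_{j-1}$ solves the shifted eigensystem one has $E_{2m}P_j(\lambda)^\sigma u_{j-1}=0$ and recovers $P_j(\lambda)^\sigma u_j=0$, after which the uniqueness argument proceeds as you describe.

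A secondary weak point: ``extension by zero'' is not a well-defined, let alone bounded, operation on $\Sob{-m}$ for $m>0$. What is needed---and what the paper imports from the appendix of the rank-one paper---is a bounded supported-extension operator $E_n:\Sob{-n}(O_{j-1})\to\Sob{-2n}_{\bar O_{j-1}}(O_j)$, with the attendant doubling of the Sobolev order at each wall. You gesture at this (``the Sobolev-space substitute for the flabbyness argument''), but it is a nontrivial construction, not extension by zero, and it is precisely the availability of $E_n$ together with the commutator trick above that turns the hyperfunction argument into a globally defined bounded operator.
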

\begin{proof}
Notice that $\lambda$ and $\sigma$ satisfy the assumptions of Proposition~\ref{prop-extension-of-solns},
hence also \eqref{eq-non-integer-char-exponents-Pjlambdasigma}.

Define $O_j$ as in the proof of Proposition~\ref{prop-extension-of-solns}.
Denote by $\bar O_{j-1}$ the closure of of $O_{j-1}$ in $O_{j}$, and $N_j=\bar O_{j-1}\setminus O_{j-1}$.
If $A$ is a closed subset of $O_j$,
then $\Sob{-k}_{A}(O_{j})$ denotes the closed subspace of $\Sob{-k}(O_{j})$
which consists of the elements supported in $A$.
There exist extension operators
\[ E_n:\Sob{-n}(O_{j-1})\to \Sob{-2n}_{\bar O_{j-1}}(O_{j}), \]
see \cite[Appendix A]{SHJHAP17resonscattpoles}.
Assume $m_j\leq m$.
The commutator $P_j(\lambda)^\sigma E_m-E_{2m}P_j(\lambda)^\sigma$ maps $\Sob{-m}(O_{j-1})$
into $\Sob{-4m}_{N_j}(O_j)$.
By Proposition~\ref{prop-Fuchsian-in-Sobolev} there exist an integer $m'\geq 4m$ and an operator
$S(\lambda): \Sob{-4m}_{N_j}(O_j)\to \Sob{-m'}_{N_j}(O_j)$ such that $P_j(\lambda)^\sigma S(\lambda)$ is the identity mapping.
Define the operator $V_j:\Sob{-m}(O_{j-1})\to\Sob{-m'}(O_j)$ by
\[ V_j(\lambda) =E_m -S(\lambda) \big(P_j(\lambda)^\sigma E_m-E_{2m}P_j(\lambda)^\sigma\big). \]
Let $u_{j-1}\in \Sob{-m}(O_{j-1})$.
Put $u_j=V_j(\lambda)u_{j-1}\in \Sob{-m'}(O_j)$.
If $\supp u_{j-1}\subseteq \OOplusclosure$ then $\supp u_{j}\subseteq \OOplusclosure$.
Assume that $u_{j-1}$ solves the $\sigma$-shifted joint eigensystem \eqref{eq-joint-eigensys-shifted} in $O_{j-1}$.
Then
\[ P_j(\lambda)^\sigma  u_j=
 P_j(\lambda)^\sigma  V_j(\lambda) u_{j-1} = E_{2m}P_j(\lambda)^\sigma u_{j-1} =0.
\]
Arguing as in the proof of Proposition~\ref{prop-extension-of-solns}, we find that $u_j$
is the unique solution of \eqref{eq-joint-eigensys-shifted} in $O_j$
which restricts to $u_{j-1}$ in $O_{j-1}$ and has its support in $\bar O_{j-1}$.
Summarizing, we see that $V=V_\ell(\lambda) \cdots V_1(\lambda)$ has the desired properties.
\end{proof}

\section{Boundary values}
For generic $\lambda\in\astarc$, we define the boundary value $\bv_\sigma u$ of $u\in\Eigen_\lambda(X)$
with respect to the characteristic exponent $\sigma=\rho-\lambda$.

Fix $\lambda\in\C^\ell=\astarc$.
Put $P_j=P_j(\lambda)$.
Define $\omega_j=(1,\ldots,1,0,\ldots,0)\in\Z^\ell$ such that the first zero occurs in the $j$-th component.
In $\OOplus$ we have
\begin{equation}
\label{eq-prod-Pj-defined}
P:=(t_1\ldots t_\ell)^{-1} P_\ell^\sigma \ldots P_1^\sigma
   = (t_{\ell}^{-1} P_\ell^{\sigma+\omega_\ell}) \cdots (t_{1}^{-1} P_1^{\sigma+\omega_1}).
\end{equation}
Since $\sigma$ is a characteristic exponent, $\ipol_j(P_j^{\sigma+\omega_j})(0)=\ipol_j(P_j)(\sigma_j)=0$ holds.
Therefore $t_j^{-1} P_j^{\sigma+\omega_j}$, and thus also $P$, uniquely extends to a differential operator in $\OO$.

Put $N_j=B\times]-1,1[^{\ell-j}$.
Identify $N_j$ with the subset of $\OO$ given by $t_1=\ldots=t_j=0$.
We say that $T\in\Diff(\OO)$ is tangent to $N_j$ if $w|_{N_j}=0$ implies $(Tw)|_{N_j}=0$.
The tangency of $T$ to $N_j$ is equivalent to $T$ being a finite sum of terms
$a(z,t)\big(\prod_{k\leq j}\theta_{k}^{\beta_k}\big)\big(\prod_{k> j}\partial_{t_k}^{\gamma_k}\big)\partial_z^\kappa$,
where $a$ is analytic in $\OO$.
If $T\in\Diff(\OO)$ is tangent to $N_j$, then we define $T|_j\in\Diff(N_j)$ by
\[ (T|_j v)\otimes\delta(t_1,\ldots,t_j) = T^{\omega_{j+1}} (v\otimes\delta(t_1,\ldots,t_j)), \]
$\delta$ the Dirac function.
Observe that $\theta_{k}^{\omega_{j+1}}=\theta_{k}+1$ if $k\leq j$ and $=\theta_{k}$ if $k>j$.
Recall $(\theta_{k}+1)\delta(t_k)=0$.
Therefore $T|_j$ arises from $T$ by
freezing coefficients at $N_j$ and discarding all terms having some $\beta_k>0$.
So, the product \eqref{eq-prod-Pj-defined} corresponds to successive reduction to boundary faces;
compare \cite[(3.26)]{Oshima83bdryval}.

The following lemma applies to $v_0=\tilde u$ where $\tilde u$ is the extension of
$t^{-\sigma}u$, $u\in\Eigen_\lambda(X)$, obtained in Proposition~\ref{prop-extension-of-solns}.

\begin{lemma}
\label{lemma-prod-Pj-applied}
Let $v_0\in\Hfcn(\OO)$, $\supp(v_0)\subset \OOplusclosure$,
such that $D^\sigma v_0=\chi_\lambda(D)v_0$ for all $D\in\D(X)$.
There exist $v_j\in\Hfcn(N_j)$ such that
\begin{equation}
\label{eq-recursive-defn-of-vj}
(t_{j}^{-1}P_{j}^{\sigma})|_{j-1} v_{j-1}=v_{j}\otimes \delta(t_{j})
\end{equation}
is satisfied for $j=1,\ldots,\ell$.
In particular, setting $u_\sigma=v_\ell\in\Hfcn(B)$, there holds
\begin{equation}
\label{eq-prod-Pj-applied}
Pv_0 = u_\sigma\otimes\delta(t).
\end{equation}
\end{lemma}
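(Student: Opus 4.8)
The plan is to construct the $v_j$ recursively, using at each step the structure of the operator $t_j^{-1}P_j^{\sigma}$ as a Fuchsian type operator with indicial polynomial vanishing at $0$, together with the fact that $v_{j-1}$ is supported in $\{t_j\geq 0\}$ (within $N_{j-1}$) and annihilated by the shifted eigensystem. Start with $v_0$: since $\sigma$ is a characteristic exponent, Proposition~\ref{prop-weak-reg-sing} and Corollary~\ref{cor-charexp-Pj} give $\ipol_j(P_j^{\sigma+\omega_j})(0)=\ipol_j(P_j)(\sigma_j)=0$, so $t_j^{-1}P_j^{\sigma+\omega_j}$ (equivalently the restriction $(t_j^{-1}P_j^\sigma)|_{j-1}$ after passing to $N_{j-1}$) extends to a differential operator across $\{t_j=0\}$, as already noted after \eqref{eq-prod-Pj-defined}.

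The inductive step is the heart of the matter. Assume $v_{j-1}\in\Hfcn(N_{j-1})$ with $\supp(v_{j-1})\subset\{t_j\geq 0,\ldots,t_\ell\geq 0\}$ and $D^\sigma v_{j-1}=\chi_\lambda(D)v_{j-1}$ on $N_{j-1}$ for all $D\in\D(X)$ (interpreting these as the operators $P_k^\sigma$ with $k>j-1$ and their generators restricted tangentially to $N_{j-1}$; the base case $j=1$ is the hypothesis on $v_0$). First I would show $P_{j}^{\sigma}v_{j-1}=0$ on $N_{j-1}$: this follows because $P_j(\lambda)$ is a polynomial combination $\sum_k\theta_j^{m_j-k}(D_{jk}-\chi_\lambda(D_{jk}))$ in the eigenoperators, so $P_j^\sigma$ kills any joint solution of the shifted eigensystem. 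Since $\supp(v_{j-1})\subset\{t_j\geq 0\}$ and $t_j^{-1}P_j^\sigma$ is a well-defined operator on $N_{j-1}$ whose indicial polynomial in the $t_j$-direction is $s\mapsto p_{j,\lambda}(\sigma_j+s)/s$ — which does \emph{not} vanish at $s=0$ precisely because $\sigma_j=(\rho-\lambda)(H_j)$ is a \emph{simple} root, an assumption carried along from Corollary~\ref{cor-charexp-Pj} — one concludes that $v_{j-1}$, as a hyperfunction supported in the half-space and annihilated by $P_j^\sigma=t_j\cdot(t_j^{-1}P_j^\sigma)$, must actually be supported in $\{t_j=0\}$, hence of the form $v_j\otimes\delta(t_j)+(\text{derivatives of }\delta)$; the non-vanishing of the reduced indicial polynomial at $0$ forces the expansion to terminate, giving exactly $v_j\otimes\delta(t_j)$ with $v_j\in\Hfcn(N_j)$. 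This is \eqref{eq-recursive-defn-of-vj}. Then one checks that $v_j$ again solves the appropriate shifted eigensystem on $N_j$: here I would use Lemma~\ref{lemma-Pj-commutation}, which shows $[P_k(\lambda),D]$ lies in $t_k\Diff_b$ for $k>j$, so the $P_k^\sigma$ and the generators $D_n^\sigma$ descend consistently to the boundary face $N_j$ after the tangential reduction; freezing coefficients at $N_j$ as in the definition of $T|_j$ and using $(\theta_k+1)\delta(t_k)=0$ makes the compatibility precise.

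Iterating $\ell$ times yields $v_\ell=u_\sigma\in\Hfcn(B)$. Finally, \eqref{eq-prod-Pj-applied} is obtained by composing the identities \eqref{eq-recursive-defn-of-vj}: from the factorization $P=(t_\ell^{-1}P_\ell^{\sigma+\omega_\ell})\cdots(t_1^{-1}P_1^{\sigma+\omega_1})$ in \eqref{eq-prod-Pj-defined}, and the observation that on $N_{j-1}$ the operator $t_j^{-1}P_j^{\sigma+\omega_j}$ acts as $(t_j^{-1}P_j^\sigma)|_{j-1}$ modulo the Dirac factors already produced (using $\theta_k^{\omega_j}=\theta_k+1$ for $k<j$ and the relations $(\theta_k+1)\delta(t_k)=0$), one peels off one $\delta(t_j)$ at a time: $Pv_0$ equals successively $(t_2^{-1}P_2^{\sigma+\omega_2})\cdots(v_1\otimes\delta(t_1))$, then $(v_2\otimes\delta(t_1,t_2))$, and so on down to $v_\ell\otimes\delta(t_1,\ldots,t_\ell)=u_\sigma\otimes\delta(t)$.

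\medskip

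The main obstacle I expect is the bookkeeping in the inductive step: making rigorous the claim that a hyperfunction supported in a closed half-space $\{t_j\geq 0\}$ and annihilated by a Fuchsian operator whose reduced indicial polynomial is nonzero at $0$ must be a single tensor factor $v_j\otimes\delta(t_j)$. This is the standard structure theorem for hyperfunction solutions of regular-singular equations supported on a hypersurface — essentially Corollary~\ref{cor-fuchsian-hyperfunc-soln} of the appendix applied in the $t_j$-variable with parameters $(z,t_{j+1},\ldots,t_\ell)$ — but one must verify that the coefficients depend analytically on the parameters so that the appendix applies uniformly, and that the tangency of the other operators $P_k^\sigma$ ($k>j$) to $N_j$ is preserved under the reduction, which is exactly what Lemma~\ref{lemma-Pj-commutation} is designed to supply.
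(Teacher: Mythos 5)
There is a genuine gap in your inductive step. You assert that $v_{j-1}$, being supported in the half-space $\{t_j\geq 0\}$ and annihilated by $P_j^\sigma$, ``must actually be supported in $\{t_j=0\}$, hence of the form $v_j\otimes\delta(t_j)$''. This is false, and it also targets the wrong identity: \eqref{eq-recursive-defn-of-vj} asserts that the \emph{image} $(t_j^{-1}P_j^{\sigma})|_{j-1}v_{j-1}$ is a pure delta tensor, not $v_{j-1}$ itself. Indeed, in the intended application $v_0=\tilde u$ restricts to $t^{-\sigma}u\neq 0$ on $\OOplus$, so it is certainly not supported in $\{t_1=0\}$; were your claim true, the recursion would collapse at $j=1$. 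Structurally, the indicial polynomial of $P_j^\sigma$ in the $t_j$-direction is $s\mapsto p_{j,\lambda}(\sigma_j+s)$, which \emph{does} vanish at $s=0$; this is exactly why the Fuchsian uniqueness statement (Corollary~\ref{cor-fuchsian-hyperfunc-soln}) does \emph{not} force support on the hypersurface, and why nontrivial half-space solutions such as $\tilde u$ exist at all. The ``reduced'' polynomial $p_{j,\lambda}(\sigma_j+s)/s$ that you invoke is the indicial polynomial of $t_j^{-1}P_j^{\sigma+\omega_j}$, which is not the operator annihilating $v_{j-1}$.

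The correct mechanism is more elementary and does not use the appendix. Set $w_0=v_0$ and $w_j=t_j^{-1}P_j^{\sigma+\omega_j}w_{j-1}$, and first prove by induction, using Lemma~\ref{lemma-Pj-commutation} conjugated by $t^{-\sigma-\omega_j}$, that $(D^{\sigma+\omega_{j+1}}-\chi_\lambda(D))w_j=0$ for all $D\in\D(X)$ --- this part of your outline is essentially right. Since $P_j(\lambda)=\sum_k\theta_j^{m_j-k}(D_{jk}-\chi_\lambda(D_{jk}))$ is built from the eigenoperators, the inductive differential equation at stage $j-1$ gives $P_j^{\sigma+\omega_j}w_{j-1}=0$ outright. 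Writing $w_{j-1}=v_{j-1}\otimes\delta(t_1,\ldots,t_{j-1})$ and using the tangency of $t_j^{-1}P_j^{\sigma}$ to $N_{j-1}$, this identity reads $t_j\cdot\big[(t_j^{-1}P_j^{\sigma})|_{j-1}v_{j-1}\big]=0$. A hyperfunction annihilated by multiplication with $t_j$ is of the form $v_j\otimes\delta(t_j)$ with no derivative-of-delta terms, since $t_j\delta^{(k)}(t_j)=-k\,\delta^{(k-1)}(t_j)\neq 0$ for $k\geq 1$; that is \eqref{eq-recursive-defn-of-vj}, and the factorization \eqref{eq-prod-Pj-defined} then yields \eqref{eq-prod-Pj-applied} as you describe. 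Note finally that no simplicity of the exponent $\sigma_j$ is assumed or needed in this lemma; that hypothesis ($p(\lambda)\neq 0$) enters only later, when one divides by $p(\lambda)$ to define $\bv_\sigma$.
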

\begin{proof}
Set $w_0=v_0$ and $w_j=t_j^{-1}P_j^{\sigma+\omega_j}w_{j-1}$ for $j=1,2,\ldots,\ell$.
Thus $w_\ell=Pv_0$.
First we prove, by induction over $j$, that the following is true:
\begin{equation}
\label{eq-diffeq-wj}
(D^{\sigma+\omega_{j+1}}-\chi_\lambda(D))w_j=0\quad\text{for all $D\in\D(X)$.}
\end{equation}
For $j=0$ this follows from our assumption on $v_0$.
Let $D\in\D(X)$.
By Lemma~\ref{lemma-Pj-commutation} there exist $S_{jk}\in\Diff_b(\OO)$ such that
\[ [P_j, D-\chi_\lambda(D)] = t_j \sum\nolimits _{k=1}^{m_j} S_{jk} (D_{jk}-\chi_\lambda(D_{jk})) \]
holds.
Conjugating with $t^{-\sigma-\omega_j}$, we get
\begin{align*}
(D^{\sigma+\omega_{j+1}} -\chi_\lambda(D))(t_j^{-1}P_j^{\sigma+\omega_j})
   &= (t_j^{-1}P_j^{\sigma+\omega_j})(D^{\sigma+\omega_j}-\chi_\lambda(D)) \\
   &\phantom{==}   -\sum\nolimits _{k=1}^{m_j} S_{jk}^{\sigma+\omega_j} (D_{jk}^{\sigma+\omega_j}-\chi_\lambda(D_{jk})).
\end{align*}
Applying this operator to $w_{j-1}$ completes the inductive step of \eqref{eq-diffeq-wj}.

We claim that there exist $v_j\in\Hfcn(N_j)$ which satisfy \eqref{eq-recursive-defn-of-vj} and
\begin{equation}
\label{eq-wj-vj-tensor-delta}
w_j(z,t) = v_j(z,t_{j+1},\ldots,t_\ell)\otimes\delta(t_1,\ldots,t_j).
\end{equation}
Assume that \eqref{eq-wj-vj-tensor-delta} holds with $j$ replaced by $j-1$.
Since $t_j^{-1}P_j^{\sigma}$ is tangent to $N_{j-1}$, we get
\[
w_j(z,t) = t_j^{-1} P_j^{\sigma+\omega_j}w_{j-1}(z,t)
 = \big((t_j^{-1}P_{j}^{\sigma})|_{j-1} v_{j-1}\big)(z,t_j,\ldots,t_\ell)\otimes\delta(t_1,\ldots,t_{j-1}).
\]
By \eqref{eq-def-Pj} and \eqref{eq-diffeq-wj}, $P_j^{\sigma+\omega_j}w_{j-1}=0$.
Hence $t_j(t_j^{-1}P_{j}^{\sigma})|_{j-1} v_{j-1}=0$,
implying \eqref{eq-recursive-defn-of-vj}.
This completes the proof of \eqref{eq-wj-vj-tensor-delta} and the lemma.
\end{proof}

Recall the definitions \eqref{eq-def-p-lambda} and \eqref{eq-prod-Pj-defined}.
The next lemma shows that $p(\lambda)^{-1}P$ applied to the extension $\tilde u$ recovers
the leading term of an asymptotic expansion of $u$ at  $t=0$.
\begin{lemma}
\label{lemma-bv-continuous-extension}
Let $u\in \Eigen_\lambda(X)$ such that $t^{-\sigma}u$, $\sigma=\rho-\lambda$, extends to a continuous function in $\OOplusclosure$.
Define $v_0\in\Lloc^1(\OO)$ by $v_0|_{\OOplus}=t^{-\sigma}u$ and $\supp(v_0)\subset \OOplusclosure$.
Then $D^\sigma v_0=\chi_\lambda(D)v_0$ holds for all $D\in\D(X)$,
and $u_\sigma$ in \eqref{eq-prod-Pj-applied} is given by
\begin{equation}
\label{eq-recover-contin-bv}
u_\sigma= p(\lambda) v_0|_{t=0+}.
\end{equation}
\end{lemma}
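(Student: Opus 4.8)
The plan is to prove the lemma by checking each assertion in turn, with the main content being the identification \eqref{eq-recover-contin-bv}. First I would verify that $v_0$ is a well-defined element of $\Lloc^1(\OO)$: since $t^{-\sigma}u$ extends continuously to $\OOplusclosure$, it is in particular bounded near the edge, hence locally integrable, and extending by zero outside $\OOplusclosure$ produces a locally integrable function on $\OO$. Next I would establish that $D^\sigma v_0 = \chi_\lambda(D) v_0$ holds in $\OO$ for all $D\in\D(X)$. On $\OOplus$ this is immediate from $D u = \chi_\lambda(D) u$ and the definition of $D^\sigma$ (conjugation by $t^\sigma$), so $(D^\sigma - \chi_\lambda(D)) v_0$ is a distribution on $\OO$ supported in the edge $B = \{t=0\}$. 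To see it vanishes, I would invoke Proposition~\ref{prop-extension-of-solns}: the continuous extension $v_0$ is \emph{a} hyperfunction solution with support in $\OOplusclosure$ restricting to $t^{-\sigma}u$ on $\OOplus$, and $P_j(\lambda)^\sigma v_0$ is supported in $\{t_j = 0\}$; but actually the cleaner route is to note that $v_0$ and the $\tilde u$ of Proposition~\ref{prop-extension-of-solns} agree on $\OOplus$ and both are supported in $\OOplusclosure$, so their difference is supported in $\OOplusclosure\setminus\OOplus$ and solves the shifted eigensystem, hence vanishes by the remark following Proposition~\ref{prop-extension-of-solns}; therefore $v_0 = \tilde u$ and the eigenequation holds. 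This also shows Lemma~\ref{lemma-prod-Pj-applied} applies to $v_0$, giving $P v_0 = u_\sigma \otimes \delta(t)$.

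The heart of the proof is then computing $u_\sigma$ explicitly when $v_0$ is continuous up to the edge. The idea is to trace through the recursion \eqref{eq-recursive-defn-of-vj} and show that at each stage the tangential operator $(t_j^{-1} P_j^\sigma)|_{j-1}$, applied to a function continuous up to $N_{j-1}$, produces, as the coefficient of $\delta(t_j)$, the restriction to $t_j = 0$ multiplied by the scalar $\big(\tfrac{\intd}{\intd s}\big|_{s=0}\big) p_{j,\lambda}(\sigma_j + s)$. Concretely, $t_j^{-1} P_j^{\sigma+\omega_j}$ has indicial polynomial in the $j$-th variable equal to $s^{-1} p_{j,\lambda}(\sigma_j + s)$ (since $p_{j,\lambda}(\sigma_j) = 0$ by the characteristic exponent condition), and evaluated at $s = 0$ this equals $p'_{j,\lambda}(\sigma_j) = \big(\tfrac{\intd}{\intd s}\big|_{s=0}\big) p_{j,\lambda}(\rho_j - \lambda_j + s)$. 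The key computational fact I would isolate is: if $w$ is continuous near $t_j = 0$, then $(t_j^{-1} P_j^{\sigma+\omega_j}) \big(w \otimes \delta(t_1,\ldots,t_{j-1})\big)$ has the form $\big(c_j \cdot w|_{t_j=0} + (\text{terms with a } \theta_j \text{ or } t_j \text{ factor annihilated against } \delta)\big)\otimes \delta(t_1,\ldots,t_j)$, where $c_j = p'_{j,\lambda}(\sigma_j)$ — this uses $(\theta_j + 1)\delta(t_j) = 0$ together with the structure \eqref{eq-def-Pj} of $P_j$, namely that modulo $\theta_j$ and $t_j R_j$ the operator $P_j^{\sigma+\omega_j}$ reduces to the scalar $p_{j,\lambda}(\sigma_j + \theta_j + 1)/(\theta_j+1)$ acting in the $t_j$-slot. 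Iterating over $j = 1,\ldots,\ell$ yields $u_\sigma = \big(\prod_j c_j\big) v_0|_{t=0} = p(\lambda)\, v_0|_{t=0+}$.

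The main obstacle I anticipate is making the tangential-reduction computation rigorous and bookkeeping-clean: one must carefully justify that the remainder term $t_j R_j$ in \eqref{eq-def-Pj} and the lower-order $\theta_j$-terms genuinely contribute nothing to the coefficient of $\delta(t_1,\ldots,t_j)$ when paired against a function that is merely continuous (not smooth) up to the edge, and that the operations $(\cdot)|_{j-1}$ and restriction $|_{t_j = 0}$ interact correctly through the recursion. A subtlety is that continuity of $v_0$ up to $\OOplusclosure$ does not immediately give continuity of the intermediate $v_j$ up to the lower faces $N_j$ — so either one argues inductively that each $v_j$ inherits continuity from the continuity of $v_{j-1}$ together with the explicit form of the reduction, or one observes that it suffices to compute $u_\sigma$ as a hyperfunction/distribution and the scalar factors emerge formally from the indicial polynomials regardless. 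I expect the paper handles this by reducing, via the local structure \eqref{eq-D-Diffzero-local} and \eqref{eq-def-Pj}, to a one-variable model computation of the type $\big(t^{-1} Q(\theta+1)\big)\big(w(t)_+\big)$ with $Q(s)/s$ the relevant indicial quotient, where $w$ continuous, $w(0+)$ the boundary value, and the answer is $Q'(0) w(0+)\,\delta(t)$ plus a term that is again of the form (continuous)$\cdot t_+^{0}$ without a $\delta$; this model computation is the one place where the hypothesis of continuity (rather than just slow growth) is actually used.
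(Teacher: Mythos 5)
Your plan for the second assertion---tracing the recursion \eqref{eq-recursive-defn-of-vj}, reducing to a one-variable model governed by the indicial quotient $q_j(s)=p_{j,\lambda}(\sigma_j+s)/s$, and extracting the scalar $q_j(0)=p_{j,\lambda}'(\sigma_j)$ via $(\theta_j+1)\delta(t_j)=0$---is essentially the paper's argument. The paper executes the model computation by a cutoff--commutator limit: it writes $t_j^{-1}P_j^{\sigma+\omega_j}=q_j(\theta_j+1)\partial_{t_j}+R_j$ and computes $\lim_{\eps\to0+}[q_j(\theta_j+1)\partial_{t_j}+R_j,\gamma_j(t/\eps)]v_{j-1}$, and the continuity you worry about is propagated through the induction by the resulting formula $v_j=q_j(0)\,v_{j-1}|_{t_j=0}$, i.e.\ the first of the two escape routes you mention.

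The first assertion is where your proposal has a genuine gap. The ``cleaner route'' is circular: the difference $v_0-\tilde u$ solves the shifted eigensystem only if $v_0$ does, and that is precisely what is to be proved; likewise the uniqueness clause of Proposition~\ref{prop-extension-of-solns} identifies $v_0$ with $\tilde u$ only among solutions of \eqref{eq-joint-eigensys-shifted}. Your first route starts from a false support statement: $(D^\sigma-\chi_\lambda(D))v_0$ is a priori supported in $\OOplusclosure\setminus\OOplus$, the union of all closed wall faces, not in the edge $\{t=0\}$ (these coincide only when $\ell=1$); and even granting correct support information, you offer no mechanism for concluding that this wall-supported distribution vanishes---mere support in a hypersurface does not force a distribution to be zero. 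The actual content here is that the operators $D^\sigma\in\Diff_0(\OO)$ are totally characteristic, so extending a function continuous up to the corner by zero across the walls produces no delta-type boundary contributions; the paper proves this by showing $\lim_{\eps\to0+}[D^\sigma-\chi_\lambda(D),\gamma(t/\eps)]v_0=0$ together with $v_0=\lim_{\eps\to 0+}\gamma(t/\eps)v_0$ in $\Lloc^1(\OO)$. Some such argument (the cutoff limit, or a careful integration by parts exploiting \eqref{eq-D-Diffzero-local} and the boundedness of $v_0$ near the walls) must be supplied; without it the first claim of the lemma, and hence the applicability of Lemma~\ref{lemma-prod-Pj-applied} to $v_0$, is not established.
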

\begin{proof}
Fix $h\in\Cinfty(\R)$, $h(t)=0$ if $t<1/2$, and $h(t)=1$ if $t>1$.
Put $\gamma(t)=h(t_1)\cdots h(t_\ell)$.
If $\psi\in\Cinfty(\R^\ell)$ then we denote by $\psi(t/\eps)$, $0<\eps<1$, the family of operators
on $\Dprime(\OO)$ given by multiplication with the functions $(z,t)\mapsto \psi(t/\eps)$, $z\in B$.
Notice the commutator formula $[\theta_j,\psi(t/\eps)]=(\theta_j\psi)(t/\eps)$.
If $T\in\Diff_b(\OO)$, then, moving $\eps$-dependent factors to right, we can write
$[T,\gamma(t/\eps)]$ as a finite sum of terms $\tilde T \psi(t/\eps)$, $\tilde T\in\Diff_b(\OO)$.
Here $\supp(\psi)\subseteq\supp(\gamma)$, and $t_j\leq 1$ holds on $\supp(\psi)$ for at least one $j$.
Therefore $\lim_{\eps\to 0+} \psi(t/\eps)f=0$ in $\Lloc^1(\OO)$ for every $f\in\Lloc^1(\OO)$.
It follows that
\[ \lim\nolimits _{\eps\to 0+} [T,\gamma(t/\eps)] f=0\quad\text{in $\Dprime(\OO)$ if $f\in\Lloc^1(\OO)$.} \]
We apply this, and the fact that $v_0=\lim_{\eps\to 0+} \gamma(t/\eps) v_0$ in $\Lloc^1(\OO)$, to get
\[ (D^\sigma -\chi_\lambda(D))v_0 =\lim\nolimits _{\eps\to 0+} [D^\sigma -\chi_\lambda(D),\gamma(t/\eps)]v_0=0. \]
This proves the first assertion of the lemma.

Define $v_1, v_2, \ldots, v_\ell$ by \eqref{eq-recursive-defn-of-vj}.
Define $q_j$ by $sq_j(s)=p_j(s+\sigma_j)$, $p_j=\ipol_j(P_j)$.
Then
\[ t_j^{-1}P_j^{\sigma+\omega_j}=\partial_{t_j}q_j(\theta_j)+R_j =q_j(\theta_j+1)\partial_{t_j}+R_j,  \]
where $R_j\in\Diff_b(\OO)$.
The recursive step \eqref{eq-recursive-defn-of-vj} reads
\[ \big(q_j(\theta_j+1)\partial_{t_j}+R_j\big) v_{j-1}=v_j\otimes\delta(t_j).  \]
Here, to simplify notation, we denote the restricted operator $R_j|_{j-1}$ also by $R_j$.
Note that $R_j$ belongs to the algebra generated by the vector fields
$\theta_{j},\ldots,\theta_\ell, \partial_{z_k}$, where $(z_k)$ are local coordinates of $B$.
To compute $v_j$ we procede by induction over $j$.
Assume that $v_{j-1}(b,t_j,\ldots,t_\ell)$ is a locally integrable function on $N_{j-1}$
which is continuous when restricted to the closed corner domain defined by $t_{j}\geq 0$, \dots, $t_\ell \geq 0$,
and which vanishes whenever $t_i<0$ for some $i\geq j$.
Put $\gamma_j(t)=h(t_j)\cdots h(t_\ell)$.
Then
\begin{equation}
\label{eq-lim-comm-vj}
\lim\nolimits _{\eps\to 0+} \big[q_j(\theta_j+1)\partial_{t_j}+R_j,\gamma_j(t/\eps)\big] v_{j-1}=v_j\otimes\delta(t_j)
\end{equation}
in $\Dprime(N_{j-1})$.
We have $[q_j(\theta_j+1),h(t_j/\eps)]=\phi_j(t_j/\eps)$
with $\phi_j\in\Cinfty(\R)$, $\supp(\phi_j)\subset[1/2,1]$.
The commutator in \eqref{eq-lim-comm-vj} equals
\[ \gamma_{j+1}(t/\eps) \big(q_j(\theta_j+1) h'(t_j/\eps)/\eps -\phi_j'(t_j/\eps)/\eps +\partial_{t_j}\phi(t_j/\eps)\big) \]
plus a finite sum of terms of the form $R\psi(t/\eps)$.
Here $R\in\Diff(N_{j-1})$, and the support of the multiplier function $\psi$
is contained in $\supp(\gamma_j)\cap\{0\leq t_n\leq 1\}$ for some $n\in\{j,\ldots,\ell\}$.
This implies that $R\psi(t/\eps)v_{j-1}$ converges to zero in $\Dprime(N_{j-1})$ as $\eps\to 0+$.
We have
\[ \lim\nolimits _{\eps\to 0+} \gamma_{j+1}(t/\eps) \eps^{-1}\phi_j'(t_j/\eps) =0\]
in the space of Radon measures.
Hence \eqref{eq-lim-comm-vj} reduces to
\[ \lim\nolimits _{\eps\to 0+} \gamma_{j+1}(t/\eps) q_j(\theta_j+1)\eps^{-1} h'(t_j/\eps) v_{j-1}=v_j\otimes\delta(t_j). \]
In view of $\lim\nolimits _{\eps\to 0+} h'(t_j/\eps)/\eps =\delta(t_j)$
and $q_j(\theta_j+1)\delta(t_j)=q_j(0)\delta(t_j)$
we have shown that $v_j=q_j(0) v_{j-1}|_{t_j=0}$.
As $u_\sigma=v_\ell$ and $q_j(0)=p_j'(0)$ formula \eqref{eq-recover-contin-bv} follows.
\end{proof}

Now, we define the boundary value operator $\bv_{\rho-\lambda}$ for generic $\lambda$.
\begin{proposition}
\label{prop-defn-bv}
Let $\lambda\in\astarc$ such that $p(\lambda)\neq 0$.
Assume that $\lambda(H_j-w\cdot H_j)$ is not a negative integer if $j=1,\ldots,\ell$ and $w\in W$.
Put $\sigma=\rho-\lambda$.
Then
\begin{equation}
\label{def-bv-sigma}
\bv_{\sigma}:\Eigen_\lambda(X)\to\Hfcn(B)=\Aprime(B), \quad
u\mapsto p(\lambda)^{-1} \int P \tilde u \intd t
\end{equation}
defines a bounded linear operator.
Here $\tilde u$ is the unique extension of $t^{-\sigma}u$ obtained in Proposition~\ref{prop-extension-of-solns};
$p(\lambda)$ and $P$ are defined in \eqref{eq-def-p-lambda} and \eqref{eq-prod-Pj-defined}, respectively.
The integral is the pushforward of hyperfunctions under the canonical projection $\OO\to B$.
Furthermore, for every non-negative integer $m$ there exists $\bv_{\sigma,m}:\Sob{-m}(\OOplus)\to\Dprime(B)$,
linear, bounded, and holomorphic in $\lambda$,
such that $\bv_\sigma u=\bv_{\sigma,m}u$ holds for $u\in\Eigen_\lambda(X)\cap\Sob{-m}(\OOplus)$.
In particular, $\bv_\sigma$ restricts to a bounded linear operator from $\mEigen_\lambda(X)$ into $\Dprime(B)$.
\end{proposition}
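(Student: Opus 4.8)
The plan is to reduce the statement to the identity $\bv_\sigma u = p(\lambda)^{-1}u_\sigma$ and then to establish continuity. First, applying Lemma~\ref{lemma-prod-Pj-applied} with $v_0=\tilde u$ gives $P\tilde u = u_\sigma\otimes\delta(t)$ with $u_\sigma\in\Hfcn(B)$; since this hyperfunction is supported in the edge, its pushforward under $\OO\to B$ is unambiguously $u_\sigma$, so the map \eqref{def-bv-sigma} is well defined and equals $u\mapsto p(\lambda)^{-1}u_\sigma$. Linearity then follows from the linearity of $u\mapsto\tilde u$ (a consequence of the uniqueness in Proposition~\ref{prop-extension-of-solns}) together with the linearity of $P$ and of the pushforward, while the two hypotheses on $\lambda$ are precisely what is needed for Proposition~\ref{prop-extension-of-solns}, Lemma~\ref{lemma-prod-Pj-applied} and the division by $p(\lambda)$ to be legitimate (recall Corollary~\ref{cor-charexp-Pj} and \eqref{eq-def-p-lambda}).

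Next I would treat the Sobolev refinement, for which the tools are already in place. Fix $m$, let $V=V(\lambda)\colon\Sob{-m}(\OOplus)\to\Sob{-m'}(\OO)$ be the operator of Proposition~\ref{prop-extension-of-solns-Sobolev}, fix a cutoff $\eta\in\Ccinfty(\OO)$ depending only on $t$, equal to $1$ near the edge and with $\supp\eta$ compact in $\OO$, and write $\pi\colon\OO\to B$ for the projection. I would set $\bv_{\sigma,m}u := p(\lambda)^{-1}\pi_*(\eta\cdot PVu)$. Here $V$ is bounded with image supported in $\OOplusclosure$; the operator $P$, extended to $\Diff(\OO)$ as in \eqref{eq-prod-Pj-defined}, has order independent of $\lambda$, so $\eta\cdot P$, which localises away from $\partial\OO$, maps $\Sob{-m'}(\OO)$ boundedly into the elements of $\Sob{-m''}(\OO)$ having $t$-support in a fixed compact set; hence the fibre integration $\pi_*$ is bounded on this image into some $\Sob{-k}(B)\subset\Dprime(B)$, and $\bv_{\sigma,m}$ is bounded. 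It is holomorphic in $\lambda$ on the stated set because $V$ is (Proposition~\ref{prop-extension-of-solns-Sobolev}), because $\lambda\mapsto P$ has polynomial coefficients --- they are built from the polynomials $\chi_\lambda(D_{jk})$ by conjugation with $t^{\sigma+\omega_j}$, and the division by $t_j$ is legitimate for every $\lambda$ since $\sigma=\rho-\lambda$ is always a characteristic exponent --- and because $p(\lambda)^{-1}$ is holomorphic where $p(\lambda)\neq0$. Finally, for $u\in\Eigen_\lambda(X)\cap\Sob{-m}(\OOplus)$ the distribution $Vu$ is supported in $\OOplusclosure$, restricts to $t^{-\sigma}u$ on $\OOplus$, and solves the $\sigma$-shifted eigensystem, so $Vu=\tilde u$ by the uniqueness in Proposition~\ref{prop-extension-of-solns} (distributions being hyperfunctions); then $PVu=P\tilde u=u_\sigma\otimes\delta(t)$, on which $\eta$ acts as the identity, and $\pi_*(u_\sigma\otimes\delta(t))=u_\sigma$, so $\bv_{\sigma,m}u=\bv_\sigma u$. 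The \emph{in particular} assertion then follows because $\mEigen_\lambda(X)=\cup_m(\Eigen_\lambda(X)\cap\Sob{-m}(\OOplus))$ and $\Dprime(B)=\cup_m\Sob{-m}(B)$ are (DFS)-spaces, so an operator that is bounded on every step is bounded on the limit.

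It remains to show that $\bv_\sigma\colon\Eigen_\lambda(X)\to\Hfcn(B)$ itself is bounded. The space $\Eigen_\lambda(X)$ is a Fr\'echet space (a closed subspace of $C^\infty(X)$ on which, by elliptic regularity for the Laplace--Beltrami operator, the $C^\infty$- and analytic topologies coincide), and $\Hfcn(B)=\Aprime(B)$ is Fr\'echet as well, so by the closed graph theorem it suffices to prove that the graph of $\bv_\sigma$ is closed. Since $\bv_\sigma u = p(\lambda)^{-1}\pi_*(P\tilde u)$ with $P$ and $\pi_*$ continuous on hyperfunctions, this reduces to the continuity of the extension map $u\mapsto\tilde u$ from $\Eigen_\lambda(X)$ into $\Hfcn(\OO)$.

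I expect this last continuity to be the main obstacle, because the construction of $\tilde u$ in Proposition~\ref{prop-extension-of-solns} passes through a supported extension chosen non-canonically from the flabbiness of $\Hfcn$. I would remove this by running the argument of Proposition~\ref{prop-extension-of-solns-Sobolev} in the hyperfunction setting: replace the Sobolev extension operators $E_n$ and the Fuchsian inverse $S(\lambda)$ by a continuous hyperfunction extension across each wall and by the continuous hyperfunction solution operator for Fuchsian equations provided by the theory of Appendix~\ref{section-Fuchsian-eqns}, obtaining a continuous operator on $\Eigen_\lambda(X)$ which, by the uniqueness in Proposition~\ref{prop-extension-of-solns}, must coincide with $u\mapsto\tilde u$. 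The delicate point is the continuity of these hyperfunction operators for the natural topologies on the local hyperfunction spaces near the edge; since those spaces can be taken over domains with compact closure, this is a matter of careful bookkeeping rather than of a new idea. As a consistency check on the formula, Lemma~\ref{lemma-bv-continuous-extension} already identifies $\bv_\sigma u$ with the leading asymptotic coefficient $p(\lambda)\,v_0|_{t=0+}$ whenever $t^{-\sigma}u$ extends continuously to the closed corner domain.
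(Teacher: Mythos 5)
Most of your argument coincides with the paper's: well-definedness and linearity follow from Proposition~\ref{prop-extension-of-solns} and Lemma~\ref{lemma-prod-Pj-applied}; the operators $\bv_{\sigma,m}$ are built exactly as in the paper from the holomorphic family $V(\lambda)$ of Proposition~\ref{prop-extension-of-solns-Sobolev}, and your identification $\bv_{\sigma,m}u=\bv_\sigma u$ on $\Eigen_\lambda(X)\cap\Sob{-m}(\OOplus)$ via uniqueness is the intended one. Your cutoff $\eta$ is a reasonable way to make the pushforward of $PVu$ precise for general $u\in\Sob{-m}(\OOplus)$, a point the paper glosses over.

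The problem is the continuity of $\bv_\sigma$ on all of $\Eigen_\lambda(X)$. You invoke the closed graph theorem correctly, but then reduce the closedness of the graph to the \emph{genuine continuity} of the extension map $u\mapsto\tilde u$ and propose to obtain that by re-running the construction of Proposition~\ref{prop-extension-of-solns} with continuous operators. This is where your plan stalls, and it is not "careful bookkeeping": the ingredient you need --- a continuous \emph{linear} supported-extension operator across each wall in the hyperfunction category --- is not supplied by the flabbiness of $\Hfcn$ (flabbiness is a pure existence statement; the surjective restriction maps between the relevant quotient spaces $\Hfcn(U)=\Aprime(\bar U)/\Aprime(\partial U)$ need not admit continuous linear sections), nor by Appendix~\ref{section-Fuchsian-eqns}, which only provides the continuous Fuchsian solution operator on $\Aprime(K)$. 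The construction is also unnecessary. The uniqueness in Proposition~\ref{prop-extension-of-solns} already shows that the extension map has \emph{closed graph}: if $u_n\to u$ in $\Eigen_\lambda(X)$ and $\tilde u_n\to v$, then $v$ is supported in $\OOplusclosure$, solves the $\sigma$-shifted system (differential operators being continuous), and restricts to $t^{-\sigma}u$ on $\OOplus$ (convergence in $\Eigen_\lambda(X)$ being locally uniform), whence $v=\tilde u$. The closed graph theorem for Fr\'echet spaces, applied to the extension map with values in $\Hfcn(U)$ for relatively compact $U$ meeting the edge, then yields its continuity with no construction at all, and composing with $P$, the pushforward, and $p(\lambda)^{-1}$ finishes the proof. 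This is the paper's argument; replace your final two paragraphs by it.
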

\begin{proof}
Using Proposition~\ref{prop-extension-of-solns} and Lemma~\ref{lemma-prod-Pj-applied},
we see that \eqref{def-bv-sigma} defines a linear map.
Observe that $\int P \tilde u \intd t$ equals $u_\sigma$ in \eqref{eq-prod-Pj-applied} with $v_0=\tilde u$.
The graph of $\bv_{\sigma}$ is closed because $\bv_\sigma$ is a composition of bounded operators with an extension operator.
The graph of the latter is closed by the uniqueness of the extensions $\tilde u$.
So, $\bv_\sigma$ is continuous by the closed graph theorem for Fr\'echet spaces.

Using the linear operator $V$ from Proposition~\ref{prop-extension-of-solns-Sobolev}, we define
$\bv_{\sigma,m}u=p(\lambda)^{-1}\int PVu\intd t$ if $u\in\Sob{-m}(\OOplus)$.
Observe that this expression equals $\bv_\sigma u$ if $u\in\Eigen_\lambda(X)$.
The remaining assertions follow.
\end{proof}

Boundary values are sections of the $G$-homogeneous line bundle $(N^*B)^\sigma\to B$.
\begin{proposition}
\label{prop-bv-defn-equivar}
Let $\lambda$ as in Proposition~\ref{prop-defn-bv}, $\sigma=\rho-\lambda$.
Then
\[\bv_{\sigma}:\Eigen_\lambda(X)\to\Hfcn(B;(N^*B)^{\sigma})\]
is $G$-equivariant.
\end{proposition}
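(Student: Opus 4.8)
The plan is to show that $\bv_\sigma$ intertwines the eigenspace representation $T_\lambda$ on $\Eigen_\lambda(X)$ with the principal series representation $\pi_\lambda$ realized on $\Hfcn(B;(N^*B)^\sigma)$, so that $\bv_\sigma(T_\lambda(g)u) = \pi_\lambda(g)(\bv_\sigma u)$ for all $g\in G$ and $u\in\Eigen_\lambda(X)$. The key point is that $\bv_\sigma$ is constructed entirely out of three ingredients: the extension procedure of Proposition~\ref{prop-extension-of-solns}, the differential operator $P$ of \eqref{eq-prod-Pj-defined}, and the pushforward integration $\int(\cdot)\intd t$ along the fibres of $\OO\to B$. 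I would check that each of these is compatible with the $G$-action, where the $G$-action near the edge is implemented by the local diffeomorphisms $\Phi_g$ of \eqref{eq-Phi-g}. Concretely, for $u\in\Eigen_\lambda(X)$ and $g\in G$, set $u_g = T_\lambda(g)u$, i.e.\ $u_g(x) = u(g^{-1}\cdot x)$; then $u_g$ is again a joint eigenfunction with the same spectral parameter, so $\bv_\sigma u_g$ is defined.

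First I would trace how the $\sigma$-shifted extension transforms. Using \eqref{eq-Phi-g}, the pullback $\Phi_g^*$ carries the restriction $t^{-\sigma}u$ over $\OOplus$ to (a constant multiple of) $t^{-\sigma}u_g$, the multiple being governed by the Jacobian factors $\tau_j(z,0)^{\sigma_j} = e^{\sigma_j\alpha_j(A(g\cdot o,z))}$ — this is exactly the transition law \eqref{eq-trafo-NstarBsigma} for $(N^*B)^\sigma$ with $\sigma = \rho-\lambda$, together with the cocycle identity $e^{(\rho-\lambda)(H(g^{-1}k))}$ appearing in the compact picture of $\pi_\lambda$. By the uniqueness statement in Proposition~\ref{prop-extension-of-solns}, $\widetilde{u_g}$ must coincide with $\Phi_g^*\tilde u$ up to these factors; here I would use the Remark after Corollary~\ref{cor-charexp-Pj} to know that $\Phi_g^{-*}D\Phi_g^*$ again lies in $\Diff_0(\OO)$ and that $\Phi_g$ preserves the Fuchsian structure and indicial polynomials of the $P_j(\lambda)$, so the whole inductive construction $O_0\to O_1\to\cdots\to O_\ell$ is equivariant wall by wall. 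Next, since $P = (t_1\cdots t_\ell)^{-1}P_\ell^\sigma\cdots P_1^\sigma$ is built from the $P_j^\sigma$, the same Remark gives $\Phi_g^{-*}P\Phi_g^* = \tilde P$ for an operator of the same type with the same indicial data; applied to $\widetilde{u_g} = (\text{factor})\cdot\Phi_g^*\tilde u$ and using \eqref{eq-prod-Pj-applied} this yields $P\widetilde{u_g} = (\bv_\sigma u_g)\otimes\delta(t)$ with $\bv_\sigma u_g$ related to $\bv_\sigma u$ precisely by the $\pi_\lambda$-action in the compact picture. Finally the pushforward $\int(\cdot)\intd t$ intertwines the fibrewise diffeomorphisms covering $g\colon B\to B$, $kM\mapsto\kappa(gk)M$, with the bundle action on $(N^*B)^\sigma$; this is a formal change-of-variables computation for the direct image of hyperfunctions, and because $P\widetilde{u_g}$ is supported on the edge only the boundary behaviour of $\Phi_g$ — i.e.\ \eqref{eq-Phi-g} — enters, not its off-edge details.

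The main obstacle I anticipate is purely bookkeeping rather than conceptual: correctly matching the various exponential cocycle factors so that the composite is exactly $\pi_\lambda(g)$ and not some twist of it. Three cocycles must be reconciled — the Jacobian $\tau(z,0)^\sigma$ from the diffeomorphism $\Phi_g$, the normalization shift built into the identification $\Hfcn(G/P;L_\lambda)\cong\Hfcn(B;(N^*B)^{\rho-\lambda})$ via \cite[Lemma 5.2.1]{Schlichtkrull84hyperf}, and the factor $e^{-(\rho-\lambda)(H(g^{-1}k))}$ in the explicit formula for $\pi_\lambda$ — and one must verify their product telescopes to the identity, which it does because all three are built from the same Iwasawa cocycle $H(g^{-1}k)$. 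A clean way to organize this, which I would adopt, is: prove $G$-equivariance first for $g=k\in K$ and for $g=a\in A$ separately (where $\Phi_g$ is especially transparent), then for $\bar n\in\bar N$, and conclude for general $g$ by the decomposition $G = \bar N A K$ (or $G=KAK$) together with continuity of $\bv_\sigma$ established in Proposition~\ref{prop-defn-bv}; alternatively one can differentiate and check intertwining at the Lie-algebra level, using that both $T_\lambda$ and $\pi_\lambda$ are $\D(X)$-compatible and that $\bv_\sigma$ is already known to be continuous. Either route reduces the statement to the already-proved structural facts — uniqueness of extensions, the behaviour of $\Diff_0(\OO)$ under edge diffeomorphisms, and the boundary formula \eqref{eq-prod-Pj-applied} — so no new analysis is needed beyond careful tracking of the line-bundle transition functions.
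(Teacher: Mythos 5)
Your proposal is correct and follows essentially the same route as the paper: conjugate the extension and the operator $P$ by the edge diffeomorphism $\Phi_g$ of \eqref{eq-Phi-g}, invoke the uniqueness of the supported extension, and use $\Phi^*\delta(t')=(\tau_1\cdots\tau_\ell)^{-1}\delta(t)$ together with the transition law \eqref{eq-trafo-NstarBsigma} to identify the resulting section of $(N^*B)^\sigma$ with the translate of $\bv_\sigma u$. The cocycle bookkeeping you flag as the main obstacle is dispatched in the paper by first checking, for an \emph{arbitrary} local diffeomorphism $\Phi$ as in \eqref{eq-Phi-edge-diffeom}, that $p(\lambda)^{-1}\int P'\tilde v\,\intd t'\,(\intd t')^\sigma$ represents the same section of $(N^*B)^\sigma$ --- the identity \eqref{eq-P-and-Pprime} makes the $\tau^{\pm\sigma}$ factors cancel against the transition functions --- so neither the decomposition $G=\bar NAK$ nor a Lie-algebra argument is needed.
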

\begin{proof}
Let $u\in\Eigen_\lambda(X)$,
and $\tilde u$ the unique extension of $t^{-\sigma}u$ obtained in Proposition~\ref{prop-extension-of-solns}.
Let $\Phi$ as in \eqref{eq-Phi-edge-diffeom}.
We derive a formula for $\bv_\sigma u$ with respect to the local
trivialization of $(N^*B)^\sigma$ given by $\Phi$.

Put $v(z',t')=u(z,t)$, that is $v=\Phi^{-*}u$ in $\OOplus'$, where
$\Phi^*$ denotes the pullback by $\Phi$, and $\Phi^{-*}=(\Phi^*)^{-1}$.
Now $\tilde v= \Phi^{-*} \tau^{-\sigma}\tilde u$ is the unique extension of $(t')^{-\sigma}v$,
supported in $\OOplusclosure'$, which solves 
$(t')^{-\sigma}\big(\Phi^{-*} (D-\chi_\lambda)\Phi^{*}\big) (t')^{\sigma} \tilde v=0$
for every $D\in\D(X)$.
Note here that the argument which proves $t^{-\sigma}\Diff_0(\OO)t^\sigma\subseteq\Diff_0(\OO)$
also applies to the present situation with primed variables.
Define the differential operators $P_j'=\Phi^{-*}P_j\Phi^*$, where, as before, $P_j=P_j(\lambda)$, and
\[ P' =(t_1'\ldots t_\ell')^{-1} (t')^{-\sigma} (P_\ell' \ldots P_1')(t')^{\sigma}.  \]
Observe that $P'$ uniquely extends to a neighbourhood of the edge by the formula
\begin{equation}
\label{eq-P-and-Pprime}
\Phi^* P' \Phi^{-*} =(\tau_1\cdots\tau_\ell)^{-1} \tau^{-\sigma} P \tau^\sigma.
\end{equation}
Here $P$ is as in \eqref{eq-prod-Pj-defined}.
We claim that
\[ \bv_{\sigma,\Phi} u:= p(\lambda)^{-1} \int P' \tilde v \intd t' \, (\intd t')^\sigma \]
exists and represents $\bv_\sigma u$ in the trivialization by $\Phi$.
To see this, write $P\tilde u= b(z)\delta(t)$ using \eqref{eq-prod-Pj-applied}.
Set $\Phi_0=\Phi|_{t=0}:z\mapsto z'(z,0)$.
Put $b'=\Phi_0^{-*}b$, that is $b'(z')=b(z)$.
By \eqref{eq-P-and-Pprime}, we have
$\Phi^* P'\tilde v = (\tau_1\ldots\tau_\ell)^{-1} \tau^{-\sigma}P\tilde u$.
Therefore,
\begin{align*}
\int P' \tilde v \intd t' \, (\intd t')^\sigma 
  &= \int \Phi^{-*} \big((\tau_1\ldots\tau_\ell)^{-1} \tau^{-\sigma}b(z)\delta(t)\big)\intd t'\, (\intd t')^\sigma \\
  &= \int \Phi^{-*} \big(\tau^{-\sigma}b(z)\big)\delta(t')\intd t'\, (\intd t')^\sigma.
\end{align*}
Here we used
$\Phi^*\delta(t')= (\tau_1\ldots\tau_\ell)^{-1} \delta(t)$,
which follows from the homogeneity properties of the Dirac function.
So, in view of \eqref{eq-trafo-NstarBsigma}, we have
\[
p(\lambda) \bv_{\sigma,\Phi} u= b'(z') \Phi_0^{-*}(\tau^{-\sigma}) (\intd t')^\sigma =b(z) (\intd t)^\sigma,
\]
which proves the claim.
(Compare the proof of assertion 1) of \cite[Theorem 3.7]{Oshima83bdryval}.)

Let $g\in G$, and define $\Phi_g$ as in \eqref{eq-Phi-g}.
So, when restricted to $\Eigen_\lambda(X)$, $\Phi_g^*$ is the left translation $L_g$ by $g$.
The diffeomorphism $\Phi_g$ defines a coordinate change $(z,t)\mapsto(z',t')$ as above,
which satisfies
\[ \Phi_{g,0}:kM\mapsto\kappa(g^{-1}k)M, \quad \tau_j(z,0) = e^{\alpha_j(A(gK,kM))}, \quad z=kM\in B. \]
Hence $(\intd t')^\sigma= e^{\sigma(A(gK,kM))}(\intd t)^\sigma$.
The computation in the previous paragraph shows that, for $u\in\Eigen_\lambda(X)$,
$\bv_\sigma(L_g u) = b(z)(\intd t)^\sigma$ holds if
$\bv_\sigma(u) = b'(z') (\intd t')^\sigma$.
This implies the $G$-equivariance of $\bv_\sigma$.
\end{proof}

\section{The Poisson isomorphism}

The Poisson transform $\Poisson_\lambda$ is a $G$-isomorphism iff $\lambda$ is
not a zero of the Harish-Chandra $\efcn$-function, $\efcn_\lambda\neq 0$.
This is the main result of \cite{Kashiwara78eigen}, and it affirms Helgason's conjecture.
With a stronger assumption on $\lambda$, we prove that the boundary value map defined
in the previous section is, up to a scalar factor, the inverse of the Poisson transform.
\begin{theorem}
\label{theorem-kkmoot}
Let $\lambda\in\astarc$ such that
\begin{inparaenum}[(i)]
\item\label{lambda-simple-ind-roots}
$\lambda(H_j-w\cdot H_j)\neq 0$ whenever $w\cdot H_j\neq H_j$,
and
\item\label{lambda-not-negative-int}
$\lambda(H_j-w\cdot H_j)$ is never a negative integer.
\end{inparaenum}
Then
\begin{equation}
\label{eq-kkmoot}
\bv_{\rho-\lambda} \Poisson_\lambda f= \cfcn(\lambda) f \quad\text{for all $f\in\Hfcn(B;(N^*B)^{\rho-\lambda})$.}
\end{equation}
Moreover, if $\cfcn(\lambda)\neq 0$, then $\bv_{\rho-\lambda}$ is bijective,
and $\Poisson_\lambda:\Hfcn(B;(N^*B)^{\rho-\lambda})\to \Eigen_\lambda(X)$
and the restriction $\Poisson_\lambda:\Dprime(B;(N^*B)^{\rho-\lambda})\to \mEigen_\lambda(X)$ 
are $G$-isomorphisms.
\end{theorem}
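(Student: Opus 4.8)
The plan is to establish \eqref{eq-kkmoot} first for $\lambda$ in a nonempty open set of spectral parameters, by reducing it to the spherical function; then to spread it to all admissible $\lambda$ by holomorphy; and finally to deduce the isomorphism statement from \eqref{eq-kkmoot} together with information on $K$-types.

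For the first step, note that by Propositions~\ref{prop-defn-bv} and~\ref{prop-bv-defn-equivar} the composite $\bv_{\rho-\lambda}\circ\Poisson_\lambda$ is a continuous $G$-endomorphism of $\Hfcn(B;(N^*B)^{\rho-\lambda})\cong\Hfcn(B)$. I would first restrict to $\lambda$ with $\RE\lambda$ in a suitably chosen open cone and otherwise generic, so that $\pi_\lambda$ is irreducible and no integral coincidences occur among the characteristic exponents $\rho-w\cdot\lambda$. For such $\lambda$ the spherical function $\sfcn_\lambda=\Poisson_\lambda 1$ has a convergent Harish-Chandra expansion \cite{Helgason94GASS} whose leading term at the edge is $\cfcn(\lambda)t^{\rho-\lambda}$, every other exponent exceeding $\rho-\lambda$ componentwise; hence $t^{-(\rho-\lambda)}\sfcn_\lambda$ extends to a continuous function on $\OOplusclosure$ whose value on the edge is the constant $\cfcn(\lambda)$. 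Lemma~\ref{lemma-bv-continuous-extension}, together with Lemma~\ref{lemma-prod-Pj-applied} identifying $\int P\tilde u\intd t$ with $u_\sigma$, then yields $\bv_{\rho-\lambda}\Poisson_\lambda 1=\cfcn(\lambda)\cdot 1$. Since $\pi_\lambda$ is irreducible and the constant section $1\neq 0$, the closed $G$-submodule $\ker\big(\bv_{\rho-\lambda}\Poisson_\lambda-\cfcn(\lambda)\,\id\big)$ contains $1$ and is therefore all of $\Hfcn(B)$, proving \eqref{eq-kkmoot} on this set.

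Next, for fixed $f$ both sides of \eqref{eq-kkmoot} are holomorphic in $\lambda$ on the open set where (i)--(ii) hold: the left-hand side by the $\lambda$-holomorphy of the Sobolev realizations $\bv_{\rho-\lambda,m}$ from Proposition~\ref{prop-defn-bv} and the holomorphic dependence of $\Poisson_\lambda$ recorded in Section~\ref{sect-Prelims}, the right-hand side because $\cfcn$ is holomorphic off a locally finite union of hyperplanes. The complement of that hyperplane union inside $\{\lambda:\text{(i)--(ii)}\}$ is connected and meets the good set from the previous step in a nonempty open subset, so by the identity theorem \eqref{eq-kkmoot} holds throughout it, and then on all of $\{\lambda:\text{(i)--(ii)}\}$ by continuity. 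Now assume $\cfcn(\lambda)\neq 0$. Then $\cfcn(\lambda)^{-1}\bv_{\rho-\lambda}$ is a continuous $G$-equivariant left inverse of $\Poisson_\lambda$, so $\Poisson_\lambda$ is injective and $\operatorname{im}\Poisson_\lambda$ is a closed, topologically complemented $G$-submodule of $\Eigen_\lambda(X)$. Restricting to a $K$-type $\delta$ gives an injection between the finite-dimensional spaces $\Hfcn(B;(N^*B)^{\rho-\lambda})_\delta$ and $\Eigen_\lambda(X)_\delta$, which by Helgason's determination of the $K$-multiplicities of eigenspace representations \cite{Helgason84GGA} — equivalently, by his resolution of the conjecture for $K$-finite functions \cite{Helgason76duality2}, valid here because $\cfcn(\lambda)\neq 0$ — have equal dimension; hence $\Poisson_\lambda$ is bijective on each $K$-type, and so on all $K$-finite vectors. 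As $K$-finite vectors are dense in $\Hfcn(B;(N^*B)^{\rho-\lambda})$ and in $\Eigen_\lambda(X)$ while $\operatorname{im}\Poisson_\lambda$ is closed, $\Poisson_\lambda$ is surjective; being a continuous bijection of Fr\'echet spaces it is a topological, hence $G$-, isomorphism by the open mapping theorem, with inverse $\cfcn(\lambda)^{-1}\bv_{\rho-\lambda}$. For the distribution version, Proposition~\ref{prop-defn-bv} shows $\bv_{\rho-\lambda}$ restricts to a continuous map $\mEigen_\lambda(X)\to\Dprime(B;(N^*B)^{\rho-\lambda})$ and $\Poisson_\lambda$ maps $\Dprime(B;(N^*B)^{\rho-\lambda})$ into $\mEigen_\lambda(X)$ \cite{Lewis78eigenf}; since $u=\Poisson_\lambda\big(\cfcn(\lambda)^{-1}\bv_{\rho-\lambda}u\big)$ for every $u\in\mEigen_\lambda(X)$ with the argument lying in $\Dprime(B;(N^*B)^{\rho-\lambda})$, the map $\Poisson_\lambda\colon\Dprime(B;(N^*B)^{\rho-\lambda})\to\mEigen_\lambda(X)$ is a continuous bijection of (DFS)-spaces and the open mapping theorem applies once more.

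The hard part will be the dimension count in the last step: one needs that, under (i)--(ii), the $K$-multiplicities of $\Eigen_\lambda(X)$ do not exceed those of the principal series, so that injectivity on $K$-types forces bijectivity, and one must check that the cited $K$-finite result is genuinely applicable for the present $\lambda$ and is not invoked circularly. A lesser but real subtlety occurs in the first step: the open cone for $\RE\lambda$ must be chosen so that $t^{-(\rho-\lambda)}\sfcn_\lambda$ is actually continuous up to the edge, not merely bounded, which forces the subleading exponents to have strictly positive — rather than merely nonnegative — relevant components.
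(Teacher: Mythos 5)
Your derivation of \eqref{eq-kkmoot} is essentially sound and close in spirit to the paper's: the paper also verifies the identity first for nice $f$ via Lemma~\ref{lemma-bv-continuous-extension} combined with the Fatou-type Proposition~\ref{prop-Helgason-Randwert} (which gives continuity of $t^{-(\rho-\lambda)}\Poisson_\lambda f$ up to the edge for \emph{every} continuous $f$ when $\RE\lambda\in\a^*_+$, not just for $f=1$), and then extends by density in $\Aprime(B)$ and by the holomorphy in $\lambda$ of $\Poisson_\lambda$ and $\bv_{\rho-\lambda,m}$. Your variant --- computing only $\bv_{\rho-\lambda}\Poisson_\lambda 1=\cfcn(\lambda)$ and invoking irreducibility of $\pi_\lambda$ to propagate the scalar --- works on a generic set and continues holomorphically, but it imports irreducibility of the spherical principal series as an extra hypothesis to be discharged; the Fatou theorem route avoids this entirely.

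The genuine gap is in the bijectivity step, and it is exactly the point you flag as ``the hard part.'' You propose to get surjectivity of $\Poisson_\lambda$ from a $K$-type multiplicity count, resting on Helgason's $K$-finite theorem --- an external input which the paper deliberately avoids (its stated aim is a self-contained proof), whose applicability under (i)--(ii) you do not verify, and which you yourself suspect of circularity. None of this is needed. From \eqref{eq-kkmoot} with $\cfcn(\lambda)\neq 0$ one already knows $\bv_{\rho-\lambda}$ is surjective; the only missing ingredient is its \emph{injectivity} on $\Eigen_\lambda(X)$, and this follows from the $G$-equivariance you have already established (Proposition~\ref{prop-bv-defn-equivar}) by a two-line averaging argument from \cite[p.~22f]{Kashiwara78eigen}: if $0\neq u\in\Eigen_\lambda(X)$ had $\bv_{\rho-\lambda}u=0$, translate so that $u(o)=1$; then $\int_K\pi(k)u\intd k=\sfcn_\lambda$, and equivariance plus continuity give $\bv_{\rho-\lambda}\sfcn_\lambda=\int_K\pi(k)\bv_{\rho-\lambda}u\intd k=0$, contradicting $\bv_{\rho-\lambda}\sfcn_\lambda=\bv_{\rho-\lambda}\Poisson_\lambda 1=\cfcn(\lambda)\neq 0$. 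Hence $\bv_{\rho-\lambda}$ is bijective and $\Poisson_\lambda=\cfcn(\lambda)\,\bv_{\rho-\lambda}^{-1}$ is bijective, with the topological statements following from the closed graph/open mapping theorems for the Fr\'echet and (DFS) spaces involved, as in your final paragraph. You should replace the $K$-multiplicity argument by this one.
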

We remark that assumption \eqref{lambda-not-negative-int}
implies assumption $(A')$ of \cite{Kashiwara78eigen}, which in turn implies $\efcn_\lambda\neq 0$.

The following Fatou--type theorem is well-known.
See \cite[Ch.~\RN 2, Theorem 3.16]{Helgason94GASS}, and, for the stronger assertion made below,
see \cite[Theorem~5.1.4]{Schlichtkrull84hyperf}.
\begin{proposition}
\label{prop-Helgason-Randwert}
Let $\lambda\in\astarc$, $\RE \lambda\in\lie{a}_+^*$.
Let $f\in C(B)$.
Then $v(b,t) = t^{-\rho+\lambda}(\Poisson_{\lambda} f)(b,t)$ extends from $\OOplus$ to a
continuous function $v:\OOplusclosure\to\C$ such that
$v(b,0) = \cfcn_\lambda f(b)$, $b\in B$.
\end{proposition}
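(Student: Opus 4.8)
The plan is to rewrite $v(b,t)$ as an integral over $\bar N$ whose integrand converges pointwise as $t$ approaches the boundary of the cube and is dominated, uniformly in $t$, by a fixed element of $L^1(\bar N)$; dominated convergence then both produces the continuous extension and evaluates it at $t=0$ as the Harish--Chandra $\cfcn$-integral times $f(b)$. To set this up, fix $b=kM$ and put $a=a(t)=\exp(-\sum_j\log(t_j)H_j)$, so that the point of $X$ attached to $(b,t)$ by \eqref{eq-diffeom-to-corner} is $ka\cdot o$ and $t^{-\rho+\lambda}=a^{\rho-\lambda}$ (both from $a^\mu=\prod_j t_j^{-\mu(H_j)}$). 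Substituting $k'\mapsto kk'$ in the compact-picture formula for $\Poisson_\lambda$ and then using the standard identity $\int_{K/M}F(k'M)\intd k'=\int_{\bar N}F(\kappa(\bar n)M)\,e^{-2\rho(H(\bar n))}\intd\bar n$ — with $dk$ of mass $1$ and $d\bar n$ normalised by $\int_{\bar N}e^{-2\rho(H(\bar n))}\intd\bar n=1$, the normalisation in the displayed formula for $\cfcn_\lambda$ — one gets $(\Poisson_\lambda f)(ka\cdot o)=\int_{\bar N}e^{-(\rho+\lambda)(H(a^{-1}\kappa(\bar n)))}f(k\kappa(\bar n)M)\,e^{-2\rho(H(\bar n))}\intd\bar n$. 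Since $H(a^{-1}\kappa(\bar n))=H(a^{-1}\bar n a)-H(\bar n)-\log a$ by uniqueness of the Iwasawa decomposition, inserting this and then substituting $\bar n\mapsto a\bar n a^{-1}$ on $\bar N$ (a Haar automorphism whose Jacobian factor $a^{-2\rho}$ cancels the prefactor $a^{2\rho}$ that appears) yields
\[
v(b,t)=\int_{\bar N}e^{-(\rho+\lambda)(H(\bar n))}\,e^{(\lambda-\rho)(H(a\bar n a^{-1}))}\,f\big(k\,\kappa(a\bar n a^{-1})M\big)\intd\bar n,\qquad t\in\,]0,1[^\ell .
\]

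For the domination, first note that conjugation by $a(t)$ acts on $\lie g_{-\alpha}$, $\alpha\in\Sigma^+$, as multiplication by the monomial $\prod_j t_j^{\alpha(H_j)}$, which lies in $]0,1]$ for $t\in[0,1]^\ell$ and is jointly continuous there; hence $a(t)\bar n a(t)^{-1}$, and with it $H(a(t)\bar n a(t)^{-1})$ and $\kappa(a(t)\bar n a(t)^{-1})$, depends continuously on $(t,\bar n)\in[0,1]^\ell\times\bar N$, and in particular extends continuously to $t$ in the corner $[0,1[^\ell$. For the majorant I would use the elementary fact that, for $a\in\overline{A^+}$ and $\bar n\in\bar N$, one has $0\le\varpi_j(H(a\bar n a^{-1}))\le\varpi_j(H(\bar n))$ for every fundamental weight $\varpi_j$; this follows from the identity $\|\varrho(g)v_0\|=e^{\varpi_j(H(g))}$ for a unit highest-weight vector $v_0$ of a finite-dimensional representation $\varrho$ of highest weight a positive multiple of $\varpi_j$, since $\varrho(\bar n)v_0$ has top-weight part $v_0$, while passing to $\varrho(a\bar n a^{-1})v_0$ rescales each strictly lower weight component by a scalar of modulus $\le1$. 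Now $\big|e^{-(\rho+\lambda)(H(\bar n))}e^{(\lambda-\rho)(H(a\bar n a^{-1}))}\big|=e^{-(\rho+\RE\lambda)(H(\bar n))-(\rho-\RE\lambda)(H(a\bar n a^{-1}))}$; since the cone spanned by the positive coroots is the simplicial cone on the simple coroots $\alpha_j^\vee$, minimising the linear functional $(\rho-\RE\lambda)$ over the box $\{Y:0\le\varpi_j(Y)\le\varpi_j(H(\bar n))\ \text{for all}\ j\}$, which contains $H(a\bar n a^{-1})$, bounds $-(\rho-\RE\lambda)(H(a\bar n a^{-1}))$ above and gives $\big|\text{integrand}\big|\le\|f\|_\infty e^{-\nu(H(\bar n))}$, where $\nu$ is the real functional with $(\nu-\rho)(\alpha_j^\vee)=\min\{\RE\lambda(\alpha_j^\vee),\rho(\alpha_j^\vee)\}$. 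These values are strictly positive (by the hypothesis $\RE\lambda\in\a^*_+$, and because $\rho(\alpha_j^\vee)\ge1$), so $\nu-\rho\in\a^*_+$ and $\int_{\bar N}e^{-\nu(H(\bar n))}\intd\bar n$ converges by the convergence of the $\cfcn$-integral; this majorant does not depend on $t$.

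Consequently the integrand in the formula for $v(b,t)$ is jointly continuous on $B\times[0,1[^\ell\times\bar N$ (after the above extension in $t$) and dominated, uniformly in $(b,t)\in B\times\,]0,1[^\ell$, by a fixed element of $L^1(\bar N)$; by dominated convergence $v$ extends to a continuous function on $\OOplusclosure=B\times[0,1[^\ell$. At $t=0$ every monomial $\prod_j t_j^{\alpha(H_j)}$ with $\alpha\in\Sigma^+$ tends to $0$, so $a(t)\bar n a(t)^{-1}\to e$, hence $H(a(t)\bar n a(t)^{-1})\to0$ and $\kappa(a(t)\bar n a(t)^{-1})\to e$ there, and the formula yields $v(b,0)=\int_{\bar N}e^{-(\rho+\lambda)(H(\bar n))}f(b)\intd\bar n=\cfcn_\lambda f(b)$ by the integral representation of the $\cfcn$-function.

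The main obstacle is the uniform $L^1(\bar N)$-domination: the pointwise limit is immediate, but because $\bar N$ is noncompact one must exhibit an integrable majorant that does not degenerate as $t\to0$, and this rests on the coroot-cone monotonicity of $H(a\bar n a^{-1})$ together with the bookkeeping, in simple-coroot coordinates, of which functionals can make $\rho-\RE\lambda$ negative on that cone. (Alternatively one may quote Helgason's Fatou theorem directly; the point of the above is that it uses nothing beyond Iwasawa coordinates, the highest-weight norm identity, and the convergence of the $\cfcn$-integral.)
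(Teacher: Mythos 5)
Your argument is correct, but note that the paper does not prove Proposition~\ref{prop-Helgason-Randwert} at all: it quotes it as a known Fatou--type theorem from \cite[Ch.~II, Theorem 3.16]{Helgason94GASS} and, for the continuity up to the corner, \cite[Theorem 5.1.4]{Schlichtkrull84hyperf}. What you give is essentially the classical proof behind those citations, made self-contained: the passage from the $B$-integral to the $\bar N$-integral with integrand $e^{-(\rho+\lambda)(H(\bar n))}e^{(\lambda-\rho)(H(a\bar n a^{-1}))}f(k\kappa(a\bar n a^{-1})M)$, the estimate $0\le\varpi_j(H(a\bar n a^{-1}))\le\varpi_j(H(\bar n))$ for $a\in\overline{A^+}$ via highest-weight vectors, and dominated convergence against a $t$-independent majorant whose integrability is exactly the convergence of the $\cfcn$-integral; this also yields the continuity on all of $\OOplusclosure$, not just the radial limit. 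Your computations (the Iwasawa identity for $H(a^{-1}\kappa(\bar n))$, the Jacobian $a^{-2\rho}$ of conjugation, the identification of the majorant exponent $\nu$ with $(\nu-\rho)(\alpha_j^\vee)=\min\{\RE\lambda(\alpha_j^\vee),\rho(\alpha_j^\vee)\}>0$, using $\rho(\alpha_j^\vee)=m_{\alpha_j}+2m_{2\alpha_j}\ge1$) all check out. If you write it up, name the two standard facts used tacitly in the weight argument: the inner product must be admissible, so that $\a$ acts symmetrically and distinct restricted-weight spaces are orthogonal (this is what turns ``rescaling lower-weight components by scalars of modulus $\le1$'' into a norm inequality), and for each $j$ there is a finite-dimensional spherical representation whose highest restricted weight is a positive multiple of $\varpi_j$, see \cite[Ch.~V]{Helgason84GGA}. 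Two harmless points of bookkeeping: you read $\RE\lambda\in\a^*_+$ as the open chamber condition $\RE\lambda(\alpha_j^\vee)>0$, which is the intended meaning since it is what the $\cfcn$-integral requires; and the integrand written with a representative $k$ of $b=kM$ is not literally a function of $b$ (only its integral is), so continuity in $b$ should be phrased via a local continuous section of $K\to B$, or by working on $K$ and descending.
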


\begin{proof}[Proof of Theorem~\ref{theorem-kkmoot}]
Propositions \ref{prop-defn-bv} and \ref{prop-bv-defn-equivar}
show that $\bv_{\sigma}$, $\sigma=\rho-\lambda$, is defined and $G$-equivariant.
Lemma~\ref{lemma-bv-continuous-extension} and Proposition~\ref{prop-Helgason-Randwert} imply \eqref{eq-kkmoot}
when $f$ is continuous.
Since the space of continuous functions is dense in the space of analytic functionals,
\eqref{eq-kkmoot} holds in general.
The $G$-equivariance of $\bv_\sigma$ implies its injectivity, hence its bijectivity;
see \cite[page 22f]{Kashiwara78eigen}.
Let us recall the argument.
Suppose there is $0\neq u\in\Eigen_\lambda(X)$ such that $\bv_\sigma(u)=0$.
By the $G$-equivariance of $\bv_\sigma$ we can assume $u(o)=1$.
Then $\int_K \pi(k)u\intd k=\sfcn_\lambda$, where $\pi(k)$ denotes left translation by $k$.
So,
\[ \bv_\sigma \sfcn_\lambda =\int_K \bv_\sigma \pi(k) u\intd k =\int_K \pi(k) \bv_\sigma u\intd k =0.  \]
But this contradicts $0\neq\cfcn_\lambda = \bv_\sigma \Poisson_\lambda 1 = \bv_\sigma \sfcn_\lambda$.
The remaining assertions are clear by the mapping properties of $\Poisson_\lambda$
and $\bv_\sigma$ already established.
\end{proof}

\appendix

\section{Fuchsian type equations}
\label{section-Fuchsian-eqns}

Let $\tau>0$, and let $N$ be an analytic manifold.
Put $M=N\times]-\tau,\tau[$.
Denote the coordinate on the interval by $t$.
Then $t$ is a defining function of the hypersurface $N\times\{0\}\subset M$ which we identify with $N$.
Denote by $M_\pm$ the open subsets of $M$ defined by $\pm t>0$.

For linear differential operators on $M$ we always assume that the coefficients are analytic.
We say that a differential operator $P$ is of Fuchsian type along the hypersurface $N$
iff there exists a polynomial $p\in\C[s]$, called the indicial polynomial of $P$,
which has degree equal to the order of $P$, and is such that $P$ has the form
\begin{equation}
\label{eq-Fuchsian-type}
P=p(\tdt) +t Q(x,t,\dd{x},\tdt).
\end{equation}
Here and below $x=(x_i)$ are local coordinates on $N$.
The differential operator $Q$ is tangent to $N$.
The formula $p(s)=(t^{-s}Pt^s)|_{t=0+}$ recovers the indicial polynomial from the operator.
Classical homogeneous distributions, pulled back to $M$ by $t:M\to\R$, are mapped by $P$ into their multiples
\[ p(\tdt)t_+^s=p(s)t_+^s, \quad p(\tdt)\delta^{(k)}(t)=p(-k-1)\delta^{(k)}(t), \]
plus less singular terms caused by the perturbation $tQ$.
The zeros of $p$ are called characteristic exponents of $P$.

If one allows indicial polynomials with coefficients in the space of analytic functions on $N$,
then Fuchsian type means the same as regular singular in a weak sense as defined in \cite[Definition 1.3]{Oshima83bdryval}.
In \cite{BaouendiGoulaouic73}, the term Fuchsian type with weight zero is used.
For the wider class of $b$-differential operators introduced in \cite{Melrose93indexthm}
indicial polynomials take values in differential operators on $N$.

Let $P$ be of Fuchsian type along $N$, $p$ its indicial polynomial.
Then $t^{-\lambda}Pt^\lambda$, $\lambda\in\C$, initially a differential operator in $M_+$,
extends uniquely from $M_+$ to a differential operator on $M$.
Moreover, the extension, also denoted $t^{-\lambda}Pt^\lambda$, is of Fuchsian type,
and its indicial polynomial is $p(\lambda+\cdot)$.
If $p(0)=0$, then $t^{-1}P$ extends uniquely from $M_+$ to a differential operator on $M$.

Following \cite{BaouendiGoulaouic73}, we shall find holomorphic solutions
of Fuchsian type equations $Pu=f$ by the method of successive approximations.
For this purpose we derive estimates of solutions of ordinary Fuchsian equations with constant coefficients.

Let $p(s)=\prod\nolimits _{k=1}^m (s-s_k)$ be a monic polynomial of degree $m$.
Then $p(\tdt)u(t)=f(t)$,
is an $m$-th order ordinary differential equation which is regular singular at $t=0$.
If $\omega$ is a star-shaped open neighbourhood of the origin in $\R$ or $\C$,
and if $m>0$ and $\RE s_k<0$ for $k=1,\ldots,m$, then, for $f\in C(\omega)$, we set
\begin{equation}
\label{eq-Fuchsian-Hpf}
H_p f(t)=\int_{[0,1]^m} \prod\nolimits _{k=1}^m \sigma_k^{-s_k-1} f(\sigma_1\cdots \sigma_m t)\intd(\sigma_1,\ldots,\sigma_m).
\end{equation}
Clearly, $H_pf\in C(\omega)$.
If $f$ is differentiable in $\omega$ then so is $H_p f$.
We set $H_1 f=f$ in the case of the constant polynomial $p=1$.
\begin{lemma}
\label{lemma-Hp-FuchsianODE-inverse}
Let $p$ be a monic polynomial of degree $m>0$ such that, for some $\delta>0$,
the zeros of $p$ are contained in the half-plane $\RE s\leq -\delta$.
Let $\omega$ be a star-shaped open neighbourhood of the origin in $\C$.
Let $n$ be a non-negative integer, and let $f$ be a holomorphic function in $\omega$.
Then
\begin{equation}
\label{eq-Fuchsian-Hp-bound}
\sup_{t\in\omega} |t|^{-n}|H_p f(t)| \leq (n+\delta)^{-m}\sup_{t\in\omega} |t|^{-n}|f(t)|
\end{equation}
holds.
Let $q_1$ and $q_2$ be monic polynomials.
Then
\begin{equation}
\label{eq-Fuchsian-theta-Hp}
q_1(\tdt) H_p f = H_{q_2} f \quad\text{if $p=q_1q_2$.}
\end{equation}
In particular, $p(\tdt)H_p f=f$.
\end{lemma}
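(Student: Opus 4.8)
The plan is to prove Lemma~\ref{lemma-Hp-FuchsianODE-inverse} in three stages, treating the bound \eqref{eq-Fuchsian-Hp-bound}, the factorization identity \eqref{eq-Fuchsian-theta-Hp}, and the inversion property $p(\tdt)H_p f = f$ in that order, since the last two are essentially algebraic once the first is in place.

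First I would establish the estimate \eqref{eq-Fuchsian-Hp-bound}. The key observation is that $H_p$ is an iterated average against the measures $\sigma_k^{-s_k-1}\intd\sigma_k$ on $[0,1]$, so it suffices to analyze a single factor $T_s f(t) = \int_0^1 \sigma^{-s-1} f(\sigma t)\intd\sigma$ and then compose. For a holomorphic $f$ on the star-shaped set $\omega$ with $|f(t)|\le C|t|^n$, one computes $|t|^{-n}|T_s f(t)| \le C\int_0^1 \sigma^{-\RE s-1}\sigma^n\intd\sigma = C/(n - \RE s)$, using that $\sigma t\in\omega$ by star-shapedness; with $\RE s\le -\delta$ this is $\le C/(n+\delta)$. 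Iterating over the $m$ factors $s_1,\dots,s_m$ (each contributing a factor $(n+\delta)^{-1}$, and noting the monomial degree $n$ is preserved at each step since $f(\sigma_1\cdots\sigma_k t)$ is still bounded by $C(\sigma_1\cdots\sigma_k)^n|t|^n$) gives the claimed bound $(n+\delta)^{-m}$. I would also record here that $H_pf$ is holomorphic on $\omega$, by differentiation under the integral sign (the integrand is smooth in the parameters and $\sigma t$ stays in $\omega$).

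Next I would prove \eqref{eq-Fuchsian-theta-Hp}. The natural route is to first handle the case where $p = s - s_1$ is linear, i.e.\ to show $(\tdt - s_1) H_{s-s_1} f = f$ and, more generally, that $(\tdt - s_1) T_{s_1} f = f$ where $T_{s_1}$ is the single-factor operator above. This is a direct computation: writing $T_{s_1}f(t) = \int_0^1 \sigma^{-s_1-1} f(\sigma t)\intd\sigma$ and applying $\tdt = t\dd{t}$, one gets $t\dd{t} T_{s_1}f(t) = \int_0^1 \sigma^{-s_1-1}\, t f'(\sigma t)\intd\sigma = \int_0^1 \sigma^{-s_1}\dd{\sigma}\big(f(\sigma t)\big)\intd\sigma$, and an integration by parts in $\sigma$ produces the boundary term $f(t)$ and a term $s_1 \int_0^1 \sigma^{-s_1-1} f(\sigma t)\intd\sigma = s_1 T_{s_1}f(t)$; the boundary contribution at $\sigma = 0$ vanishes because $\RE s_1 < 0$. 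Rearranging gives $(\tdt - s_1)T_{s_1}f = f$. Since the operators $T_{s_k}$ for different $k$ commute (Fubini) and $\tdt - s_1$ commutes with each $T_{s_k}$, writing $H_p = T_{s_1}\cdots T_{s_m}$ for $p = \prod(s - s_k)$ and peeling off one factor at a time yields $q_1(\tdt)H_p f = q_1(\tdt)\,(\text{factors of }q_1)\,H_{q_2}f = H_{q_2}f$ whenever $p = q_1 q_2$ (the factors of $q_1$, namely the $T_{s_k}$ with $s_k$ a root of $q_1$, being inverted one by one by the corresponding $\tdt - s_k$). Taking $q_1 = p$, $q_2 = 1$ gives $p(\tdt)H_p f = f$, which is the final assertion. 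I should be slightly careful about roots of $p$ with multiplicity, but the argument above is insensitive to that: $T_s$ is defined for each root with its multiplicity and the commuting-operator bookkeeping goes through verbatim.

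The main obstacle I anticipate is not any single step but rather the careful handling of the star-shapedness hypothesis and the convergence/differentiability claims: one must consistently use that $\sigma_1\cdots\sigma_k t\in\omega$ for $\sigma_k\in[0,1]$ so that $f$ is actually being evaluated inside its domain, and one must justify differentiation under the integral sign and the integration by parts (in particular the vanishing of the boundary term at $\sigma=0$, which is exactly where $\RE s_k < -\delta < 0$ enters). None of this is deep, but it is the part where an imprecise argument would break down, so I would state the single-factor lemma cleanly and then let the multi-factor case follow by iteration and Fubini.
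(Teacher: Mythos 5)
Your proposal is correct and follows essentially the same route as the paper: the bound comes from the elementary integral $\int_0^1\sigma^{n+\delta-1}\intd\sigma=(n+\delta)^{-1}$ applied factor by factor, and the identity \eqref{eq-Fuchsian-theta-Hp} comes from $\tdt f(\sigma_1\cdots\sigma_m t)=\sigma_1\dd{\sigma_1}f(\sigma_1\cdots\sigma_m t)$ plus integration by parts in $\sigma_1$, with the boundary term at $\sigma_1=0$ killed by $\RE s_1<0$. The only difference is organizational — you factor $H_p$ as a composition of commuting single-root operators $T_{s_k}$, whereas the paper integrates by parts directly in the $m$-fold integral — and both are equally valid.
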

\begin{proof}
Write $p(s)=\prod\nolimits _{k=1}^m (s-s_k)$.
Using the assumption, we estimate \eqref{eq-Fuchsian-Hpf}.
We get
\[
|H_p f(t)| \leq \int_{[0,1]^m} \sigma_1^{\delta-1}\cdots\sigma_m^{\delta-1}
  |\sigma_1\cdots \sigma_m t|^n\intd\sigma \cdot \sup_{t\in\omega}|t^{-n}f(t)|,
\]
which implies \eqref{eq-Fuchsian-Hp-bound}.
Observe that
$\tdt f(\sigma_1\cdots \sigma_m t) =\sigma_1 \dd{\sigma_1}f(\sigma_1\cdots \sigma_m t)$
holds.
Performing an integration by parts with respect to $\sigma_1$ we get
\begin{align*}
\tdt H_p f(t)
  &= \int_{[0,1]^m} s_1 \prod\nolimits _{k=1}^m \sigma_k^{-s_k-1} f(\sigma_1\cdots\sigma_m t)\intd \sigma \\
  &\phantom{=} + \int_{[0,1]^{m-1}} \prod\nolimits _{k=2}^m \sigma_k^{-s_k-1}f(\sigma_2\cdots\sigma_m t)\intd(\sigma_2,\ldots,\sigma_m)\\
  &= s_1 H_p f(t) + H_{p/(s-s_1)}f(t).
\end{align*}
This proves \eqref{eq-Fuchsian-theta-Hp} in the case of $q_1(s)=s-s_1$.
The general case is readily deduced from this.
In particular, we have proved the last assertion of the lemma.
\end{proof}

The following result agrees, under the additional assumption that the real parts
of characteristic exponents are negative, with \cite[Proposition 2]{BaouendiGoulaouic73}.
The general case is \cite[Theorem 2.4]{Oshima83bdryval}.
Essentially we follow \cite{BaouendiGoulaouic73}.
For $\rho>0$ we denote by $D_\rho\subset\C$ the open disk with radius $\rho$ and center at the origin.

\begin{proposition}
\label{prop-fuchsian-holom-soln}
Let $P$ be of Fuchsian type.
Assume that no non-negative integer is among the characteristic exponents.
Let $\Omega\subset\C^n$ be open and bounded, $\Omega_1$ an open neighbourhood of $\overline{\Omega}$.
Assume that $P$ extends to a holomorphic differential operator in $D_{\rho_1}\times\Omega_1$.
There exists $0<\rho<\rho_1$ such that the following holds.
If $\omega\subseteq D_\rho$ is open and starshaped with respect to the origin
and if $f$ is holomorphic in $\omega\times\Omega_1$,
then there exists a unique function $u$,
defined and holomorphic in $\omega\times\Omega$, which satisfies $Pu=f$.
\end{proposition}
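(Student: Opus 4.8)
The plan is to follow \cite{BaouendiGoulaouic73}, solving $Pu=f$ by a convergent scheme of successive approximations built from the integral operator $H_p$ of Lemma~\ref{lemma-Hp-FuchsianODE-inverse}. That lemma requires the characteristic exponents to lie strictly in the left half-plane, whereas here we only know that none of them is a non-negative integer; so the argument proceeds in two stages --- a finite, purely algebraic reduction to the negative--real--part case, followed by an analytic fixed--point argument for the reduced equation --- and uniqueness is handled separately.

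For the reduction, write $P=p(\tdt)+tQ$ as in \eqref{eq-Fuchsian-type}, with $m=\ord P=\deg p$ and $Q$ tangent to $N$. Since $p$ has only finitely many zeros, fix a positive integer $\nu$ and a $\delta>0$ with $\RE s\le\nu-\delta$ for every zero $s$ of $p$. Expanding in powers of $t$, any solution $u=\sum_{k\ge0}u_k(x)t^k$ of $Pu=f$ must satisfy a recursion $p(k)u_k=f_k-(\text{a differential expression in }u_0,\dots,u_{k-1})$; as no non-negative integer is a zero of $p$ we have $p(k)\ne0$ for all $k\ge0$, so the recursion determines the $u_k$ uniquely, and holomorphically on $\Omega_1$. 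Put $u^{(\nu)}=\sum_{k=0}^{\nu-1}u_k(x)t^k$ and $g=f-Pu^{(\nu)}$; by construction $g$ is holomorphic on $\omega\times\Omega_1$ and vanishes to order $\ge\nu$ in $t$, so $\tilde f:=t^{-\nu}g$ is holomorphic there. Substituting $u=u^{(\nu)}+t^{\nu}v$ reduces the problem to finding a holomorphic $v$ with $\tilde Pv=\tilde f$, where $\tilde P:=t^{-\nu}Pt^{\nu}$ is again of Fuchsian type, with tangent part $t\tilde Q$, $\tilde Q=t^{-\nu}Qt^{\nu}$ still tangent to $N$ and of order $\le m$, and with indicial polynomial $\tilde p(s)=p(\nu+s)$, all of whose zeros $s-\nu$ now have real part $\le-\delta<0$.

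For the reduced equation, Lemma~\ref{lemma-Hp-FuchsianODE-inverse} provides that $H_{\tilde p}$ is a right inverse of $\tilde p(\tdt)$, so $\tilde Pv=\tilde f$ is implied by the fixed--point equation
\[ v=H_{\tilde p}\tilde f-H_{\tilde p}\big(t\tilde Q v\big)=:h+Av . \]
I would then show that the Neumann series $v=\sum_{j\ge0}A^{j}h$ converges to a function holomorphic on $\omega\times\Omega$. The hypothesis that $\omega$ is starshaped enters here: by \eqref{eq-Fuchsian-Hpf}, $H_{\tilde p}$ sends functions holomorphic on $\omega\times\Omega_1$ to functions holomorphic there, so each iterate $A^{j}h$ is manifestly holomorphic on $\omega\times\Omega$ after a suitable shrinking of the $x$-domain. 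For the estimate one works on a decreasing scale of open sets interpolating between $\overline{\Omega}$ and $\Omega_1$: the $x$-derivatives occurring in $\tilde Q$ are controlled by Cauchy's inequalities at the cost of the gaps between consecutive sets (the abstract Cauchy--Kovalevskaya argument, Ovsyannikov's method), while $H_{\tilde p}$ supplies the decisive gain $(n+\delta)^{-m}$ of \eqref{eq-Fuchsian-Hp-bound} whenever it is applied to a function vanishing to order $n$ in $t$. Since $A$ raises the vanishing order in $t$ by one (the explicit factor $t$), $A^{j}h$ vanishes to order $\ge j$, and over $j$ iterations the accumulated $H_{\tilde p}$-gain is comparable to $(j!)^{-m}$, which compensates the $\sim j^{jm}$ loss from the $j$-fold Cauchy estimates; the residual geometric factor is absorbed by choosing $\rho$ small, depending only on $P$, $\Omega$, $\Omega_1$ and $\rho_1$, not on $f$ or $\omega$. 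Then $v=h+Av$, and applying $\tilde p(\tdt)$ and invoking that $\tilde p(\tdt)H_{\tilde p}$ is the identity (Lemma~\ref{lemma-Hp-FuchsianODE-inverse}) gives $\tilde Pv=\tilde f$, whence $u=u^{(\nu)}+t^{\nu}v$ is holomorphic on $\omega\times\Omega$ and solves $Pu=f$.

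Uniqueness is immediate: if $Pu=0$ with $u$ holomorphic on $\omega\times\Omega$, then near any point $(x_0,0)$ the Taylor coefficients of $u$ in $t$ satisfy the same recursion with vanishing right--hand side, so $p(k)\ne0$ and induction force all of them to vanish near $x_0$; hence $u$ vanishes near $(x_0,0)$, and therefore on all of $\omega\times\Omega$ by analyticity and connectedness of the starshaped $\omega$. The main obstacle is the convergence in the preceding paragraph: one must organize the scale of Banach spaces and the iteration so that the smoothing gain $(n+\delta)^{-m}$ of Lemma~\ref{lemma-Hp-FuchsianODE-inverse} exactly offsets the loss of $x$-derivatives on the shrinking domains together with the growth of the $\tdt$-factors of $\tilde Q$ on high-order Taylor coefficients, which is precisely what forces $\rho$ to be taken small; this is the technical core of \cite{BaouendiGoulaouic73}, carried out in the present generality in \cite[Theorem~2.4]{Oshima83bdryval}.
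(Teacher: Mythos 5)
Your overall strategy coincides with the paper's (and with \cite{BaouendiGoulaouic73}): both reduce to the case where all characteristic exponents have negative real part by solving the Taylor recursion for finitely many coefficients and conjugating by a power of $t$, both then solve a fixed-point equation built from $H_p$ by successive approximation on a scale of Banach spaces of holomorphic functions, and both get uniqueness from the formal power series recursion plus connectedness. The reduction and the uniqueness argument in your write-up are fine.

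There is, however, a genuine gap in the convergence argument, namely in the treatment of the $\tdt$-derivatives contained in $\tilde Q$. Your iteration is $v=h+Av$ with $A=-H_{\tilde p}(t\tilde Q)$, and your bookkeeping spends the entire degree-$m$ gain $(n+\delta)^{-m}$ of $H_{\tilde p}$ against the $j^{jm}$ loss from Cauchy estimates in $x$, leaving the monomials $a(x,t)\partial_x^\alpha(\tdt)^k$ of $\tilde Q$ with $k>0$ to be controlled by "the growth of the $\tdt$-factors on high-order Taylor coefficients". That step fails: in the norm $M_n(w)=\sup_{t\in\omega}|t|^{-n}\|w(t)\|_{E_r}$ one does \emph{not} have $M_n(\tdt w)\le CnM_n(w)$ --- for $w=t^k$ with $k\ge n$ the ratio is $k$, unbounded for fixed $n$, and the iterates $A^jh$ vanish to order $j$ but are not polynomials of degree $j$. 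Nor can one fall back on Cauchy estimates in $t$: the statement requires a single $\rho$ working uniformly for \emph{every} starshaped $\omega\subseteq D_\rho$, and on a thin sector or slit domain the disks needed for Cauchy's inequality leave $\omega$. The missing idea is precisely the decomposition used in the paper: writing $p=\prod_k(s-s_k)$ and $p_j(s)=\prod_{k\le j}(s-s_k)$, one expands $P=p(\tdt)+\sum_{j\le m}tQ_{m-j}(x,t,\partial_x)\,p_j(\tdt)$ with $Q_{m-j}$ of order $\le m-j$ \emph{in $x$ only}, sets $u=Hv$, and iterates on $v=f-\sum_j tQ_{m-j}p_j(\tdt)Hv$. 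The identity \eqref{eq-Fuchsian-theta-Hp} then gives $p_j(\tdt)H_p=H_{p/p_j}$, so every $\tdt$-derivative is absorbed exactly into an explicit integral operator with the (reduced) gain $(n+\delta)^{-(m-j)}$, which still matches the Cauchy loss $(r_2-r_1)^{-(m-j)}$ of $Q_{m-j}$. Without this (or an equivalent device) your Neumann series estimate does not close, so the "technical core" you defer to the references is not merely technical for the scheme as you have set it up.
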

\begin{proof}
Denote the indicial polynomial of $P$ by $p$.
We can assume that $p$ is monic.
The equation $Pu=f$ holds in formal power series
$u(t,x)=\sum\nolimits _k u_k(x)t^k$ and $f(t,x)=\sum\nolimits _k f_k(x)t^k$
if and only if for every $k=0,1,2,\ldots$ the following holds:
\begin{equation}
\label{eq-fuchsian-formal-soln}
p(k)u_k(x) + \sum\nolimits _{j<k} S_{jk}(x,\dd{x}) u_j(x) = f_k(x).
\end{equation}
Here $S_{jk}$ are differential operators.
The equations \eqref{eq-fuchsian-formal-soln} are uniquely solvable because $p(k)\neq 0$ by assumption.

To pass from formal to convergent power series solutions it suffices to show that,
for some positive integer $k$, the equation $Pt^ku=t^kf$ has a solution $u$.
The characteristic exponents of $t^{-k}Pt^k$ are those of $P$ shifted by $-k$ towards the left half-plane.
We assume that $p(s)=0$ implies $\RE s\leq -2$.
It suffices to prove the existence of holomorphic solutions of $Pu=f$ under this additional assumption.

We set $p_j(s)=\prod_{k\leq j}(s-s_k)$.
So $p_j$ divides  $p=p_m$.
By \eqref{eq-Fuchsian-type}, we can write
$P=p(\tdt) +\sum\nolimits _{j\leq m} tQ_{m-j}p_j(\tdt)$.
Here $Q_{m-j}(t,x,\dd{x})$ are differential operators of orders $\leq m-j$.

Denote by $\Omega_r\subset\C^n$ the union of the open polydisks with radii $r>0$ and centers in $\Omega\subset\C^n$.
Fix $0<R\leq 1$ such that $\Omega_R\subseteq\Omega_1$.
Denote by $E_r$, $0<r<R$, the Banach space of holomorphic functions $v$ in $\Omega_r$
which have finite norm $\|v\|_{E_r}=\sup_{x\in\Omega_r}|v(x)|$.
If $f$ is bounded holomorphic in $\omega\times\Omega_r$ we regard $f$ also
as a holomorphic function $\omega\to E_r$, $t\mapsto f(t)=f(t,\cdot)$.
Set $H=H_{p}$ according to \eqref{eq-Fuchsian-Hpf}.
We regard $H$ as an operator which acts on holomorphic $E_r$-valued functions.

With $\rho$ to be determined let $\omega\subseteq D_\rho$ open and starshaped with respect to the origin.
Let $f$ be holomorphic in $\omega\times\Omega_1$.
Assume that $\|f(t)\|_{E_r}\leq 1$ holds if $0<r<R$ and $t\in \omega$.
We show the existence of $v$ holomorphic in $\omega\times\Omega$ which solves the fixed point equation
\begin{equation}
\label{eq-Fuchsian-fixed-pt}
v= f - \sum\nolimits _{j\leq m} tQ_{m-j}p_j(\tdt) Hv.
\end{equation}
By Lemma~\ref{lemma-Hp-FuchsianODE-inverse}, $u=Hv$ satisfies $p(\tdt)u=v$.
Therefore, \eqref{eq-Fuchsian-fixed-pt} gives $Pu=f$.

Applying Cauchy estimates we get
\[ \|\dd{x_k} v\|_{E_{r_1}} \leq (r_2-r_1)^{-1} \|v\|_{E_{r_2}} \quad \text{if $0<r_1<r_2<R$.} \]
Hence there exists a constant $K\geq 1$ such that
\begin{equation}
\label{eq-Fuchsian-Cmj-estimate}
\|Q_{m-j}v\|_{E_{r_1}} \leq K(r_2-r_1)^{-(m-j)}\|v\|_{E_{r_2}}, \quad 0<r_1<r_2<R,
\end{equation}
uniformly in $t\in D_{\rho_1/2}$.
It follows from Lemma~\ref{lemma-Hp-FuchsianODE-inverse} that
\begin{equation}
\label{eq-Fuchsian-thetaH-estimate}
\sup\nolimits _{t\in\omega} |t|^{-n}\|p_j(\tdt)Hw(t)\|_{E_{r}} \leq
    (n+2)^{-(m-j)} \sup\nolimits _{t\in\omega} |t|^{-n}\|w(t)\|_{E_{r}}
\end{equation}
holds when $0<r<R$, $0\leq n\in\Z$, and $j=0,1,\ldots,m$.

We solve \eqref{eq-Fuchsian-fixed-pt} by successive approximation.
Set $v_{-1}=0$,
$v_{n}= f - \sum\nolimits _{j\leq m} tQ_{m-j}p_j(\tdt) Hv_{n-1}$.
Put $w_n=v_n-v_{n-1}$.
Then $w_0=v_0=f$, and
\[ w_{n+1}= - \sum\nolimits _{j\leq m} tQ_{m-j}p_j(\tdt) Hw_{n}, \quad n>0. \]
We claim that
\begin{equation}
\label{eq-Fuchsian-successive-estimate}
\sup\nolimits _{t\in\omega} |t|^{-n}\|w_n(t)\|_{E_r} \leq K^n (m+1)^n \big(e^{n}/(R-r)^n\big)^m, \quad 0<r<R,
\end{equation}
holds for $n=0,1,2,\ldots$
Recalling our assumptions on $f$, we see that \eqref{eq-Fuchsian-successive-estimate} holds when $n=0$.
Assuming $0<r<r+\delta<R$, and applying
\eqref{eq-Fuchsian-Cmj-estimate} and \eqref{eq-Fuchsian-thetaH-estimate},
we estimate as follows:
\begin{align*}
\sup\nolimits _{t\in\omega} |t|^{-n-1}\|w_{n+1}(t)\|_{E_{r}}
   &\leq K \sum\nolimits _{j\leq m}\delta^{-(m-j)} \sup\nolimits _{t\in\omega} |t|^{-n}\|p_j(\tdt) Hw_{n}(t)\|_{E_{r+\delta}} \\
   &\leq K \sum\nolimits _{j\leq m}((n+2)\delta)^{-(m-j)} \sup\nolimits _{t\in\omega} |t|^{-n}\|w_{n}(t)\|_{E_{r+\delta}}.
\end{align*}
We apply the inductive assumption \eqref{eq-Fuchsian-successive-estimate}, choose $\delta=(R-r)/(n+2)$, and deduce
\[
\sup\nolimits _{t\in\omega} |t|^{-n-1}\|w_{n+1}(t)\|_{E_{r}}
   \leq K^{1+n} (m+1)^n \big(e^{n}(1+1/n)^n/(R-r)^n\big)^m
       \sum\nolimits _{j\leq m} (R-r)^{-(m-j)}.
\]
Since $R\leq 1$, the sum is $\leq (m+1)(R-r)^{-m}$.
It follows that \eqref{eq-Fuchsian-successive-estimate} holds with $n$ replaced by $n+1$, proving the claim.

Fix $\rho\in]0,\rho_1/2[$ such that
\[ \sum\nolimits _n K^n (m+1)^n \big(e^{n}/(R/2)^n\big)^m \rho^n<\infty. \]
It follows from \eqref{eq-Fuchsian-successive-estimate} 
that $v=\sum_{n=0}^\infty w_n$ converges uniformly in $\omega\times\Omega_{R/2}$
to a holomorphic solution of \eqref{eq-Fuchsian-fixed-pt}.
Then $u=Hv$ solves $Pu=f$ in $\omega\times\Omega$.
The uniqueness of $u$ follows because formal power series solutions are unique and $\omega$ is connected.
To remove the boundedness assumption on $f$ replace $\omega$ in the proof by its dilates
$(1-\eps)\omega$ which are relatively compact subsets of $\omega$,
and then, using uniqueness, let $\eps\to 0+$.
\end{proof}

Next we consider hyperfunction solutions of Fuchsian type equations.
Let $K\subseteq M$ compact.
By definition, $\Analytic(K)$ is the inductive limit of the spaces of holomorphic functions
defined in complex open neighbourhoods of $K$.
It is known that $\Analytic(K)$ is a (DFS)-space, and
the strong dual of $\Analytic(K)$ is the (FS)-space $\Aprime(K)$ of analytic functionals carried by $K$.

The following results and their proofs are from \cite[\S 3]{Oshima83bdryval}.

\begin{proposition}
\label{prop-fuchsian-anafunc-soln}
Let $P$ be of Fuchsian type.
Assume that no negative integer is among the characteristic exponents.
Let $K\subseteq N$ compact.
Then $P:\Aprime(K)\to\Aprime(K)$ is an isomorphism.
\end{proposition}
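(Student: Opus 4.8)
The plan is to reduce the hyperfunction statement to the holomorphic case in Proposition~\ref{prop-fuchsian-holom-soln} by duality. Recall that $\Aprime(K)$ is the strong dual of the (DFS)-space $\Analytic(K)$, so it suffices to produce a continuous linear inverse of $P$ on $\Analytic(K)$ and then pass to transposes. Concretely, $P$ acts on holomorphic functions defined in a complex neighbourhood of $K\subseteq N$; after complexifying the transverse variable $t$ as well, $P$ becomes a Fuchsian type operator in a neighbourhood in $\C\times\C^n$ of a compact subset of the hypersurface $\{t=0\}$. The hypothesis that no negative integer is a characteristic exponent means exactly that $p(-k)\neq 0$ for all positive integers $k$; but the solution operator we construct acts on functions holomorphic near $t=0$, whose power series start at $t^0$, so what is actually used is $p(k)\neq 0$ for $k=0,1,2,\dots$. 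This is arranged, as in the proof of Proposition~\ref{prop-fuchsian-holom-soln}, by first conjugating $P$ with a power $t^{k_0}$ to push all characteristic exponents into $\RE s<0$; the negative-integer hypothesis guarantees no nonnegative integer is created in the shift, and simultaneously (by symmetry $s\mapsto -s$ in passing to the transpose) that the adjoint equation is likewise solvable.

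The key steps, in order, are as follows. First I would fix a compact $K\subseteq N$ and, using Proposition~\ref{prop-fuchsian-holom-soln}, choose for each complex neighbourhood $\Omega_1$ of $K$ a radius $\rho>0$ and a starshaped $\omega\subseteq D_\rho$ so that $P:\Analytic(\omega\times\Omega)\to\Analytic(\omega\times\Omega_1)$ is invertible with holomorphic inverse, uniformly controlled. Taking inductive limits over shrinking neighbourhoods of $K$ gives a continuous linear operator $G:\Analytic(K)\to\Analytic(K)$ (here one identifies $\Analytic(K)$ with germs near $K$ of functions on $M=N\times\,]{-}\tau,\tau[$, using that the transverse slab can be absorbed into the ambient neighbourhood) with $PG=\id$ and $GP=\id$ by the uniqueness clause of Proposition~\ref{prop-fuchsian-holom-soln}. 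Second, I would transpose: since $P:\Analytic(K)\to\Analytic(K)$ is a topological isomorphism between (DFS)-spaces, its transpose ${}^tP:\Aprime(K)\to\Aprime(K)$ is a topological isomorphism of the strong duals. Third, I would check that ${}^tP$ is, up to the canonical pairing, the operator $P$ itself acting on analytic functionals — this is just the definition of how a differential operator with analytic coefficients acts on $\Aprime(K)$, so no computation beyond integration by parts on a complex neighbourhood is needed.

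The step I expect to be the main obstacle is the bookkeeping in the first step: one must verify that the neighbourhood-and-radius constraints coming from Proposition~\ref{prop-fuchsian-holom-soln} are compatible as the complex neighbourhood of $K$ shrinks, so that the inverses glue to a well-defined operator on the inductive limit $\Analytic(K)$, and that the resulting $G$ is continuous for the (DFS)-topology. This is where the uniformity in $\rho$ and the boundedness of $\Omega$ in Proposition~\ref{prop-fuchsian-holom-soln} are essential. A minor additional point is the reduction to the case $\RE s<0$ for the characteristic exponents: conjugating by $t^{k_0}$ replaces $p(s)$ by $p(s+k_0)$, and one must note that under the negative-integer hypothesis this shift introduces no vanishing at nonnegative integers for $P$ and, simultaneously, no vanishing at nonpositive integers for the formal adjoint, which is exactly the condition needed for ${}^tP$ to be handled by the same holomorphic theory. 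Once these points are in place, the isomorphism assertion follows formally.
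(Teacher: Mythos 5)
There is a genuine gap, and it sits exactly at the point you wave through as ``just the definition'': the duality is applied to the wrong operator. The action of $P$ on $\Aprime(K)$ is defined via the formal adjoint, $\langle Pu,\phi\rangle=\langle u,P'\phi\rangle$, so $P:\Aprime(K)\to\Aprime(K)$ is the dual map of $P':\Analytic(K)\to\Analytic(K)$, \emph{not} of $P:\Analytic(K)\to\Analytic(K)$. Consequently the operator you must invert on $\Analytic(K)$ is the transpose $P'$, and it is $P'$ that the hypothesis is tailored to: since $(\tdt)'=-\tdt-1$, the operator $P'$ is again of Fuchsian type with indicial polynomial $s\mapsto p(-s-1)$, and $p(-k-1)\neq 0$ for $k=0,1,2,\dots$ is precisely the assumption that no negative integer is a characteristic exponent of $P$. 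Thus Proposition~\ref{prop-fuchsian-holom-soln} applies directly to $P'$, with no shifting needed, and dualizing gives the result. This is the paper's (three-line) proof.

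Your route of inverting $P$ itself on $\Analytic(K)$ cannot be repaired. The hypothesis does not exclude non-negative integer exponents of $P$: for $P=\tdt$ the only characteristic exponent is $0$, which is allowed, yet $\tdt$ annihilates constants and so is not injective on $\Analytic(K)$ (while on $\Aprime(K)$ it acts on $v\otimes\delta^{(k)}(t)$ by the non-zero factor $-(k+1)$, consistent with the proposition). The proposed fix by conjugation with $t^{k_0}$ does not help: invertibility of $t^{-k_0}Pt^{k_0}$ on germs of holomorphic functions does not yield invertibility of $P$ there, because multiplication by $t^{k_0}$ is injective but not surjective on such germs; and the phrase ``by symmetry $s\mapsto-s$ in passing to the transpose'' gestures at the correct mechanism but is not what your argument actually uses. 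Replace steps one and three by: apply Proposition~\ref{prop-fuchsian-holom-soln} to $P'$ (checking its hypothesis via $s\mapsto p(-s-1)$), then take the dual map. The bookkeeping you flag as the main obstacle (gluing the local inverses over shrinking neighbourhoods of $K$ and continuity on the (DFS)-limit) is indeed needed, but for $P'$ rather than $P$.
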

\begin{proof}
The transpose $P'$ of $P$ is of Fuchsian type, and, because of $(\tdt)' = -\tdt -1$,
the indicial polynomial of $P'$ is $s\mapsto p(-s-1)$.
Hence $P'$ satisfies the assumptions of Proposition~\ref{prop-fuchsian-holom-soln},
Therefore, $P':\Analytic(K)\to\Analytic(K)$ is an isomorphism,
and so is its dual map $P$.
\end{proof}

Denote by $\Hfcn(M)$ the space of hyperfunctions on $M$.
The sheaf of hyperfunctions is flabby,
so for every (non-empty) open $U\subset M$ the restriction map $\Hfcn(M)\to\Hfcn(U)$ is surjective.
If $U\subseteq M$ is open and bounded, then $\Hfcn(U)=\Aprime(\bar U)/\Aprime(\partial U)$.
If $v\in\Aprime(K)$, $U\subseteq K$, then $\dot v\in\Hfcn(U)$ denotes the hyperfunction represented by $v$.
Moreover, $\supp(\dot v)=U\cap\supp(v)$.

\begin{corollary}
\label{cor-fuchsian-hyperfunc-soln}
Let $P$ satisfy the assumptions of Proposition~\ref{prop-fuchsian-anafunc-soln}.
Then $P$ maps the subspace of hyperfunctions on $M$ which are supported in $N$ bijectively onto itself.
\end{corollary}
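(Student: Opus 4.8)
\emph{Overall strategy.} The plan is to reduce the corollary to Proposition~\ref{prop-fuchsian-anafunc-soln} by localizing along the hypersurface $N$. That $P$ maps $\Gamma_N(M):=\{u\in\Hfcn(M):\supp u\subseteq N\}$ into itself is immediate, $P$ being a differential operator (so $\supp(Pu)\subseteq\supp u$); only bijectivity of the restricted map is at issue. Being a local operator, $P$ induces an endomorphism of the sheaf $\mathcal B_N$ on $N$ whose sections over open $V\subseteq N$ are the hyperfunctions on $V\times]-\tau,\tau[$ supported in $V\times\{0\}$, and this sheaf has $\Gamma_N(M)$ as its global sections. It suffices to show the induced endomorphism of $\mathcal B_N$ is an isomorphism of sheaves, which may be checked on stalks, hence over a cofinal family of opens. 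So we may assume $N\subseteq\R^n$ is a coordinate chart, whence $M\subseteq\R^{n+1}\subseteq\C^{n+1}$, and it is enough to prove that $P$ induces an isomorphism of $\{w\in\Hfcn(U):\supp w\subseteq N\cap U\}$ for the bounded tubes $U=B_0\times]-\tau',\tau'[$ with $B_0$ a ball, $\overline{B_0}\subseteq N$, $\tau'<\tau$, since the balls $B_0$ form a cofinal family of opens in $N$. Throughout we use two standard facts about analytic functionals, both resting on the polynomial convexity of every compact subset of the real chart $\R^{n+1}$: (a) if $L\subseteq K$ are such compacta and $g\in\Aprime(K)$ with $\supp g\subseteq L$, then $g\in\Aprime(L)$; and (b) the Martineau decomposition $\Aprime(K_1\cup K_2)=\Aprime(K_1)+\Aprime(K_2)$ for such compacta.

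\emph{Local model.} Fix a tube $U$ as above and put $K=\overline{B_0}\times\{0\}=\overline{U}\cap N$ and $L=\partial B_0\times\{0\}=K\setminus U$. Given $w\in\Hfcn(U)$ with $\supp w\subseteq N\cap U$, write $w=\dot v_0$ with $v_0\in\Aprime(\overline{U})$, using $\Hfcn(U)=\Aprime(\overline{U})/\Aprime(\partial U)$. Since $\supp v_0\subseteq\overline{U}$ and $\supp v_0\cap U=\supp w\subseteq K$, we have $\supp v_0\subseteq K\cup\partial U$; by (b), $v_0=v+v'$ with $v\in\Aprime(K)$ and $v'\in\Aprime(\partial U)$, and since $\supp\dot v'=U\cap\supp v'=\emptyset$ we get $w=\dot v$ with $v\in\Aprime(K)$. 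Conversely, $\dot v=0$ for $v\in\Aprime(K)$ means $\supp v\cap U=\emptyset$, i.e. $\supp v\subseteq K\setminus U=L$, which by (a) is equivalent to $v\in\Aprime(L)$. Hence $w\mapsto v$ identifies $\{w\in\Hfcn(U):\supp w\subseteq N\cap U\}$ with $\Aprime(K)/\Aprime(L)$, compatibly with $P$; under this identification the endomorphism in question is the map induced by $P\colon\Aprime(K)\to\Aprime(K)$ on the quotient by $\Aprime(L)$.

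\emph{Conclusion.} The transpose of $P$ maps $\Analytic(L)$ continuously into itself, hence $P$ maps $\Aprime(L)$ into $\Aprime(L)$; and $L,K$ being compact subsets of $N$, Proposition~\ref{prop-fuchsian-anafunc-soln} makes $P$ an isomorphism of $\Aprime(L)$ and of $\Aprime(K)$. Thus $P$ respects the short exact sequence $0\to\Aprime(L)\to\Aprime(K)\to\Aprime(K)/\Aprime(L)\to 0$ and is an isomorphism on the two outer terms, so by the snake lemma it induces an isomorphism on $\Aprime(K)/\Aprime(L)$. By the local model this is the endomorphism induced by $P$ over $U$; since such $U$ (equivalently, the balls $B_0$) form a cofinal family, $P$ induces an isomorphism of the sheaf $\mathcal B_N$, and therefore of $\Gamma_N(M)$.

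\emph{Main obstacle.} The Fuchsian content — in particular the hypothesis that no negative integer is a characteristic exponent — enters only through Proposition~\ref{prop-fuchsian-anafunc-soln}, applied over $K$ and over $L$. The step demanding the most care is the local model: the passage between hyperfunctions supported in the submanifold $N$ and analytic functionals carried by compacta of $N$, which is where one must invoke polynomial convexity of compacta in the chart together with the Martineau decomposition; beyond that it uses nothing deeper than the standard structure theory of analytic functionals.
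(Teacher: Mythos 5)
Your proof is correct, but it takes a genuinely different route from the paper's. The paper argues directly on $M$: for surjectivity it writes a hyperfunction supported in $N$ as a locally finite sum $\sum_j\dot f_j$ with $f_j\in\Aprime(K_j)$, $K_j\subset N$ compact, and solves termwise by Proposition~\ref{prop-fuchsian-anafunc-soln}; for injectivity it localizes and deduces $\supp(v)=\supp(Pv)$ for $v\in\Aprime(K')$ by applying that proposition a second time to the smaller compact set $\supp(Pv)$. You instead sheafify over $N$, identify the local sections supported in the edge with the quotient $\Aprime(K)/\Aprime(L)$, $K=\overline{B_0}\times\{0\}$, $L=\partial B_0\times\{0\}$, via the Martineau decomposition and the support theorem for analytic functionals carried by real compacta, and then obtain injectivity and surjectivity simultaneously from the five lemma, applying the proposition to both $K$ and $L$. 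The two arguments rest on the same Proposition~\ref{prop-fuchsian-anafunc-soln} and on comparable structure theory of $\Aprime$ (your facts (a) and (b) versus the paper's locally finite decomposition); what yours buys is that the exactness of $0\to\Aprime(L)\to\Aprime(K)\to\Aprime(K)/\Aprime(L)\to 0$ packages both halves of bijectivity at once, at the price of the sheaf-theoretic bookkeeping, whereas the paper's support argument is more elementary but handles the two halves separately. The only step you leave implicit is the excision isomorphism identifying $\mathcal B_N(B_0)$ with the hyperfunctions on the shrunken tube $B_0\times\left]-\tau',\tau'\right[$ supported in the edge; this is standard for sections supported in a closed set, and is in fact avoidable here since $B_0\times\left]-\tau,\tau\right[$ is already bounded, so one may take $U=B_0\times\left]-\tau,\tau\right[$ directly (with $\partial U$ then containing the faces $\overline{B_0}\times\{\pm\tau\}$ as well) and run the same local model.
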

\begin{proof}
(Compare \cite[Theorem 4.5]{KashiwaraOshima77} and \cite[Theorem 3.1]{Oshima83bdryval}.)
Represent $f$ as a locally finite sum $f=\sum_j \dot f_j$, $f_j\in\Aprime(K_j)$, $K_j\subset N$ compact.
By Proposition~\ref{prop-fuchsian-anafunc-soln} there exist $u_j\in\Aprime(K_j)$ such that $Pu_j=f_j$.
Then $u=\sum_j \dot u_j\in\Hfcn(M)$ satisfies $Pu=f$, and $\supp(u)\subset N$ holds.

To prove injectivity let $u\in\Hfcn(M)$, $\supp(u)\subseteq N$, $Pu=0$.
Write $u$ as a locally finite sum $u=\sum_j \dot u_j$, where $u_j\in\Aprime(K_j)$, $K_j\subset N$ compact.
Let $V$ be an open subset of $M$ such that the indices $j$ which satisfy $V\cap K_j\neq\emptyset$ form a finite set $J$.
Put $v=\sum_{j\in J} u_j\in\Aprime(K')$, $K'=\cup_{j\in J} K_j$.
Then $u|_V=\dot v|_V$ in $\Hfcn(V)$.
By Proposition~\ref{prop-fuchsian-anafunc-soln} we find $w\in\Aprime(\supp(Pv))$ such that $Pv=Pw$.
The injectivity of $P$ on $\Aprime(K')$ implies $v=w$.
Therefore, $\supp(v)=\supp(Pv)$, and
\[ \supp(v)\cap V=\supp(Pv)\cap V = \supp(P\dot v|_V) =\supp(Pu|_V)=\emptyset. \]
Hence $u|_V=0$.
Since the open subsets $V$ cover $M$, we have proved $u=0$.
\end{proof}

\begin{remark}
\label{remark-Fuchsian-general-NbyN}
An inspection of the proof of Proposition~\ref{prop-fuchsian-holom-soln} reveals that
the result holds more generally for $n\times n$-systems $Pu=f$, $P=p(\tdt)I_n+tQ$,
where $I_n$ is the $n\times n$ unit matrix and $Q=\big(Q_{ij}(x,t,\dd{x},\tdt)\big)$
an $n\times n$-matrix of scalar differential operators having orders $\leq \deg p$.
The indicial polynomial $p$ is scalar.
As a consequence Proposition~\ref{prop-fuchsian-anafunc-soln}
and Corollary~\ref{cor-fuchsian-hyperfunc-soln} also hold for this more general
class of Fuchsian type operators.
\end{remark}

We denote by $\Sob{-m}_N(M)$ the set of $f\in \Sob{-m}(M)$, $0\leq m\in\Z$, satisfying $\supp(f)\subseteq N$.
This is a closed subspace of the Sobolev space $\Sob{-m}(M)$.

\begin{proposition}
\label{prop-Fuchsian-in-Sobolev}
Let $P$ satisfy the assumptions of Proposition~\ref{prop-fuchsian-anafunc-soln}.
Let $0<m\in\Z$.
There exists $m'\in\Z$, $m\leq m'$, and a bounded linear operator
$S:H_N^{-m}(M)\to H_N^{-m'}(M)$ such that $PS=\id$.
If the coefficients of $P$ depend holomorphically on a parameter $\lambda$,
then $\lambda\mapsto S$ is a holomorphic operator-valued map.
\end{proposition}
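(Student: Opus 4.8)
The plan is to establish this as the Sobolev-space counterpart of Corollary~\ref{cor-fuchsian-hyperfunc-soln}, by exhibiting $S$ as an explicit \emph{finite} Neumann series, so that boundedness and holomorphic dependence on parameters become automatic. Write $P=p(\tdt)+tQ$ as in \eqref{eq-Fuchsian-type}. First I would recall the structure of negative-order Sobolev distributions supported in the hypersurface $N$: every $f\in\Sob{-m}_N(M)$ has a unique expansion $f=\sum\nolimits_{k=0}^{m-1}f_k\otimes\dd t^k\delta(t)$ with the $f_k$ in suitable (shifted) Sobolev spaces on $N$, and $f\mapsto(f_k)_k$ is a topological isomorphism; thus ``normal order at most $m-1$'' — where the normal order of such an $f$ is the largest $k$ with $f_k\neq 0$ — characterizes $\Sob{-m}_N(M)$. (In the situation where this proposition will be applied $N$ is essentially a compact manifold times a box, so there is no subtlety in the Sobolev theory; in general one localizes near $N$.) Using $p(\tdt)\dd t^k\delta(t)=p(-k-1)\dd t^k\delta(t)$ and the hypothesis that no negative integer is a characteristic exponent (so $p(-k-1)\neq 0$ for all $k\ge 0$), the operator $p(\tdt)$ acts on $\Sob{-m}_N(M)$ by $f_k\mapsto p(-k-1)f_k$ and is an isomorphism; I denote by $T$ its inverse, $f_k\mapsto p(-k-1)^{-1}f_k$, which is bounded on $\Sob{-\ell}_N(M)$ for every $\ell$.

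Next I would verify that the perturbation $tQ$ strictly lowers the normal order. Since $Q=Q(x,t,\dd x,\tdt)$ contains no bare $\dd t$, it maps a distribution of normal order $\le k$ to one of normal order $\le k$: powers of $\tdt$ act as scalars on each $\dd t^k\delta(t)$, the $\dd x$ act on the coefficients, and Taylor-expanding the analytic coefficients in $t$ together with $t^j\dd t^k\delta(t)=c_{j,k}\dd t^{k-j}\delta(t)$ (which vanishes if $j>k$) can only decrease the order. Multiplication by $t$ then lowers the normal order by one, via $t\,\dd t^\ell\delta(t)=-\ell\,\dd t^{\ell-1}\delta(t)$. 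Hence $tQ$ lowers the normal order strictly, while, being a differential operator of order $\le\deg p=:d$, it is bounded $\Sob{-\ell}_N(M)\to\Sob{-\ell-d}_N(M)$. Put $B=T\circ(tQ)$. By the above, $B$ lowers the normal order strictly and maps $\Sob{-\ell}_N(M)$ boundedly into $\Sob{-\ell-d}_N(M)$, so $B^m=0$ on $\Sob{-m}_N(M)$ (after $m$ applications the normal order drops below $0$). Therefore, with $m'=m+(m-1)d$, the operator
\[
S:=\Big(\sum\nolimits_{j=0}^{m-1}(-B)^j\Big)\,T\colon\ \Sob{-m}_N(M)\longrightarrow\Sob{-m'}_N(M)
\]
is bounded — the $j$-th summand maps into $\Sob{-m-jd}_N(M)\subseteq\Sob{-m'}_N(M)$, and support in $N$ is preserved at every stage. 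Since $P=p(\tdt)(I+B)$ and $(I+B)\sum\nolimits_{j=0}^{m-1}(-B)^j=I-(-B)^m=I$ on $\Sob{-m}_N(M)$, we get $PS=p(\tdt)T=\id$, which is the assertion.

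For the parameter dependence: if the coefficients of $P$ are holomorphic in $\lambda$, then so is $p$, hence so are the finitely many numbers $p(-k-1)^{-1}$ (which stay nonzero throughout the relevant domain by the standing hypothesis) and the operator $tQ$; thus $T$ and $B$ are holomorphic operator-valued maps, and $S$, being a finite sum of compositions of such, is holomorphic. The only genuinely non-formal ingredient is the structure theorem of the first paragraph, together with the verification that $p(\tdt)$, $T$ and $tQ$ interact with the normal-order filtration exactly as stated; granting these, everything else is bookkeeping. A minor point to keep in mind is that all operators involved preserve ``supported in $N$'', which is automatic since they are differential operators (possibly composed with $T$).
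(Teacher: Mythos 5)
Your proof is correct and is essentially the paper's argument: both rest on writing elements of $\Sob{-m}_N(M)$ as finite sums $\sum_k f_k\otimes\delta^{(k)}(t)$, on the identity $p(\tdt)\delta^{(k)}(t)=p(-k-1)\delta^{(k)}(t)$ with $p(-k-1)\neq0$, and on the observation that the remaining part of $P$ strictly lowers the transversal order, so that $P$ acts triangularly with invertible diagonal. The paper solves the resulting triangular system by back-substitution, whereas you invert the same triangular structure via a finite Neumann series; this is only a repackaging, and the holomorphy argument is the same.
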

\begin{proof}
First, we observe that there are differential operators $Q_{kj}$ on $N$,
having orders not larger than the order of $P$, such that for $v\in\Dprime(N)$
the following identity holds:
\[ P (v\otimes\delta^{(k)}(t))
  = p(-k-1)(v\otimes\delta^{(k)}(t)) +\sum\nolimits _{j<k} (Q_{kj} v) \otimes\delta^{(j)}(t),
\]
Every $f\in \Sob{-m}_N(M)$ is of the form
$f= \sum\nolimits _{k=0}^m f_k\otimes \delta^{(k)}(t)$
with $f_k\in\Sob{-m_1}(N)$, $m_1$ not depending on $f$.
For $v= \sum\nolimits _{k=0}^m v_k\otimes \delta^{(k)}(t)$, $v_k\in\Dprime(N)$,
the equation $Pv=f$ is equivalent to a triangular system for the vector $(v_k)_k$.
Because or our assumption the diagonal elements of the triangular matrix are non-zero.
Therefore, we can solve $Pv=f$ uniquely by a distribution $Sf:=v$, $\supp(v)\subset N$.
Moreover, there exists $m_2$ depending on $m$ and $m_1$ such that $v_k\in\Sob{-m_2}(N)$.
Hence $S$ is defined and has the asserted mapping properties for some $m'$.
If the coefficients of $P$ depend holomorphically on $\lambda$, then so do the coefficients of $Q_{kj}$,
and therefore also $S$ will depend holomorphically on $\lambda$.
\end{proof}

\newcommand{\etalchar}[1]{$^{#1}$}

\end{document}